\newtheorem{case}{Case}
\newtheorem{theorem}{Theorem}
\newtheorem{example}{Example}
\newtheorem{definition}{Definition}
\newtheorem{proposition}{Proposition}
\newtheorem{corollary}{Corollary}
\newtheorem{lemma}{Lemma}
\renewcommand{\le}{\leqslant}
\renewcommand{\leq}{\leqslant}
\renewcommand{\ge}{\geqslant}
\renewcommand{\geq}{\geqslant}
\begin{document}
%
\title{A Generic Construction of $q$-ary Near-MDS Codes Supporting 2-Designs with Lengths Beyond $q + 1$}
%
%
%

	\author{Hengfeng Liu, Chunming Tang,~\IEEEmembership{Member,~IEEE,} Zhengchun Zhou,~\IEEEmembership{Senior Member,~IEEE,} Dongchun Han, Hao Chen
	\thanks{Manuscript received November 26, 2025; revised March 2, 2026; accepted April 2, 2026. This work was supported in part by the National Natural Science Foundation of China under Projects 12231015,  12571578, 62032009, and W2531009; and in part by Science and Technology Projects of Xizang Autonomous Region, China  (No. XZ202502JD0036). (\textit{Corresponding Author: Chunming Tang})}
	\thanks{Hengfeng Liu, Dongchun Han are with the School of Mathematics, Southwest Jiaotong University, Chengdu, 610031, China, and also with the
		State Key Laboratory of Integrated Service Networks, School of Cyber Engineering, Xidian University, Xi’an 710071, China  (e-mail: hengfengliu@163.com; han-qingfeng@163.com).}
	\thanks{Chunming Tang, Zhengchun Zhou are with the School of Information Science and Technology, Southwest Jiaotong University, Chengdu, 610031, China (e-mail: tangchunmingmath@163.com; zzc@swjtu.edu.cn).}
	\thanks{Hao Chen is with the College of Information Science and Technology, Jinan University, Guangzhou, Guangdong, 510632, China (e-mail: haochen@jnu.edu.cn).}	
}

%
%

\markboth{Journal of \LaTeX\ Class Files,~Vol.~14, No.~8, August~2015}%
{Shell \MakeLowercase{\textit{et al.}}: Bare Demo of IEEEtran.cls for IEEE Journals}
%



\maketitle

	\begin{abstract}
	A linear code with parameters $[n, k, n - k + 1]$ is called maximum distance separable (MDS), and one with parameters $[n, k, n - k]$ is called almost MDS (AMDS). A code is near-MDS (NMDS) if both it and its dual are AMDS. NMDS codes supporting combinatorial $t$-designs have attracted growing interest, yet constructing such codes remains highly challenging. In 2020, Ding and Tang initiated the study of NMDS codes supporting 2-designs by constructing the first infinite family, followed by several other constructions for $t > 2$, all with length at most $q + 1$. Although NMDS codes can, in principle, exceed this length, known examples supporting 2-designs and having length greater than $q + 1$ are extremely rare and limited to a few sporadic binary and ternary cases. In this paper, we present the first \emph{generic construction} of $q$-ary NMDS codes supporting 2-designs with lengths \emph{exceeding $q + 1$}. Our method leverages new connections between elliptic curve codes, finite abelian groups, subset sums, and combinatorial designs, resulting in an infinite family of such codes along with their weight distributions. 
\end{abstract}

\begin{IEEEkeywords}
	Linear code, Near-MDS code, $t$-design, Elliptic curve code, Weight distribution
\end{IEEEkeywords}

%
\IEEEpeerreviewmaketitle

	\section{Introduction}\label{introduction}

\IEEEPARstart{W}{e} start by a brief recall of linear codes and $t$-designs. Let $\mathbb{F}_q$ denote the finite field of $q$ elements, where $q$ is a prime power. An $[n,k]$ linear code $\mathcal{C}$ over the finite field $\mathbb{F}_q$ is a $k$-dimensional subspace of $\mathbb{F}_{q}^{n}$. For a codeword  $\mathbf{c} = (c_1, c_2, \ldots, c_n) \in \mathcal{C}$, its \emph{support} is defined as $\operatorname{supp}(\mathbf{c}) = \{ 1 \leq i \leq n : c_i \neq 0 \}$. The Hamming weight of $\mathbf{c}$ is $\operatorname{wt}(\mathbf{c}) = \#\operatorname{supp}(\mathbf{c})$, and the minimum distance $d$ of $\mathcal{C}$ is the minimum Hamming weight in all non-zero codewords. The \emph{dual code} of $\mathcal{C}$ is defined by $
\mathcal{C}^\perp = \left\{ \mathbf{u} \in \mathbb{F}_q^n : \langle \mathbf{u}, \mathbf{c} \rangle = 0 \text{ for all } \mathbf{c} \in \mathcal{C} \right\},$ where $\langle \cdot, \cdot \rangle$  denote the Euclidean inner product. By definition, the dimension of $\mathcal{C}^\perp$ is $n-k$, and denote by $d^\perp$ the minimum Hamming weight of $\mathcal{C}^\perp$. Let $A_{i}$ denote the number of codewords of Hamming weight $i$ in $\mathcal{C}$. The \emph{weight distribution} of $\mathcal{C}$ is the sequence $(A_0, A_1, \cdots, A_n)$, and the weight enumerator of $\mathcal{C}$ is defined by the polynomial $A(z) = 1 + A_1 z + A_2 z^2 + \cdots + A_n z^n$. Analogously, let $(A_0^\perp, A_1^\perp, \cdots, A_n^\perp)$ denotes the weight distribution of $\mathcal{C}^\perp$. Weight distributions of linear codes have been extensively studied in \cite {DT2020, D12020, DBCH2024, Fan2024, Heng Ding Zhou2018,Heng Li Du Chen2021, Heng2023-NMDS, T20204-design, Tang xiang feng 2017,Xiang2022, Xiang2023}, they are of interest for they provide crucial information for computing the error probability of its error detection and correction \cite{T2006 }. \par

Let $n, k, t$ be positive integers with $1\le t\le k\le n$. A $t$-$\left ( n, k, \lambda   \right ) $ \emph{design} is an incidence structure $\mathcal{D}= \left (\mathcal{P} ,\mathcal{B}   \right ) $, where $\mathcal{P}$ is a set with $n$ elements and $\mathcal{B}$ is a family of $k$-subsets of $\mathcal{P}$, such that any $t$-subset of  $\mathcal{P}$ is contained in exactly $\lambda $ elements in $\mathcal{B}$. The elements in $\mathcal{P}$ are referred to as \emph{points} and those in $\mathcal{B}$ are called \emph{blocks}. Let $b$ denote the number of blocks in $\mathcal{B}$. In a $t$-$\left ( n, k, \lambda   \right ) $ design, the parameters are interrelated and satisfy $$\binom{n}{t}\lambda _{t}= \binom{k}{t} b.$$ In addition, a $t$-$\left (n, k, \lambda_{t}   \right ) $ design is also an $i$-$\left ( v, k, \lambda_{i}  \right ) $ design for $1\le i\le t-1$, where $$\lambda_{i}=\lambda_{t}\binom{v-i}{t-i} \Big/ \binom{k-i}{t-i}.$$ 
For a $t$-$(n,k,\lambda)$ design $(\mathcal{P},\mathcal{B})$, let $\mathcal{B}^{c}$ be the family of the complement sets of the blocks in $\mathcal{B}$. Then $\left(\mathcal{P},\mathcal{B}^{c}\right)$ is a $t$-$\left(n, n-k,\lambda_{c}\right)$ design, where
$$\lambda_{c} = \lambda \binom{n-t}{k}/\binom{n-t}{k-t}.$$
The pair $\left(\mathcal{P},\mathcal{B}^{c}\right)$ is defined to be the \emph{complementary design} of $(\mathcal{P},\mathcal{B})$. Denote by $\binom{\mathcal{P}}{k}$ the set of all $k$-subsets of $\mathcal{P}$, then $(\mathcal{P}, \binom{\mathcal{P}}{k})$ is a natural $k$-$(v, k, 1)$ design, called a \emph{complete design}. A $t$-design is called \emph{simple} if it does not have repeated blocks in $\mathcal{B}$. A $t$-$(v, k, \lambda)$ design is called a \emph{Steiner system} and denoted by $S(t, k, v)$ if $t \geq 2$ and $\lambda = 1$.
For more information about $t$-designs, the reader is referred to \cite{BJL,C2010}.\par
The close relation between linear codes and $t$-designs is a central topic in both coding theory and combinatorial design. It is well known that many $t$-designs can be derived from linear codes, and also many $t$-designs may yields linear codes. A standard coding approach of constructing $t$-designs goes as the following. Denote by 
$$\mathcal{H}_{w}(\mathcal{C})=\frac{1}{q-1}\{\{\operatorname{supp}(\mathbf{c}):\operatorname{wt}(\mathbf{c})=w \text{ and } \mathbf{c} \in\mathcal{C}\}\},$$ where $\{\{\}\}$ denotes the multiset notation, and $\frac{1}{q-1} S$ denotes the multiset obtained by dividing the multiplicity of each element in the multiset $S$ by $q-1$. We say that the linear code $\mathcal{C}$ supports or holds a $t$-design if the incidence structure $\left ( \mathcal{P} (\mathcal{C} ), \mathcal{H}_{w}(\mathcal{C} )\right ) $ is a $t$-$(n,w,\lambda)$ design for some $d\le w\le n $, then the number of blocks $b$ and the parameter $\lambda$ satisfy 	$$ b=\frac{1}{q-1} A_w,\quad \lambda=\frac{\binom{w}{t}}{(q-1)\binom{n}{t}} A_w.$$  Such $t$-designs are called \emph{support designs} from linear codes. Note that these $t$-designs may be simple or may have repeated blocks. Following this approach, a lot of $t$-designs have been constructed from linear codes \cite{D12020,D32020,D2024,DBCH2024,T20204-design,W2023,XuG2022,YanQ2022}.  For further information about linear codes and $t$-designs, the reader is referred to \cite{D2022}. \par
An $[n,k,n-k+1]$ linear code is called an MDS code, MDS codes have been extensively studied for decades as their parameters attain the Singleton bound. An $[n,k,n-k]$ code is said to be almost MDS (AMDS for short). Near MDS codes were introduced by Dodunekov and Landjev \cite{Dodunekov1995} aiming at constructing good linear codes by slightly weakening the restrictions of MDS codes. If the code $\mathcal{C}$ and its dual code $\mathcal{C}^\perp$ are both AMDS, then $\mathcal{C}$ is said to be near MDS (NMDS for short).  NMDS codes are attractive as they have many nice applications in combinatorial designs \cite{D12020, DT2020, Heng2023-NMDS,XuG2022}, finite geometry \cite{Dodunekov1995}, cryptography \cite{Simos2012,ZhiY2025} and
many other fields. The first NMDS code was the $[11,6,5]$ ternary Golay code \cite{G1949} , discovered by Golay in 1949. Over the years, many NMDS codes have been constructed in \cite{DY2024,Fan2024,Heng2022,Heng2023-NMDS,LiH 2023,XuL2023,XuL2024,Yin2024,ZhiY2025}, and their weight distributions were also determined.\par
Constructing $t$-designs from special NMDS codes has attracted significant attention in recent years. MDS codes do hold $t$-designs, but all these $t$-designs are complete and thus trivial \cite{D12020}. By weakening the condition, it is natural to ask if an NMDS code holds $t$-designs. Although a lot of $t$-designs from general linear codes were constructed, it is challenging to construct $t$-designs from NMDS codes. The first NMDS code holding a $t$-design was the  $[11,6,5]$ ternary Golay code, it holds $4$-designs. Moreover, its extended code holds a Steiner system $S(5,6,12)$.  In fact, it remained an open problem whether there exist infinite families of NMDS codes supporting $t$-design ($t\ge 2 $) for 70 years until Ding and Tang \cite{D12020} first presented two infinite families of NMDS codes with length $q+1$ and dimension $4$ over $\mathbb{F}_{q}$ supporting $3$-designs or $2$-designs in 2020. Subsequently, a few $q$-ary NMDS codes with length at most $q+1$ that support $t$-designs ($t\ge 2$) have been proposed in recent years. Heng \textit{et al.} \cite{Heng2023,Heng2023-NMDS} proposed NMDS codes over $\mathbb{F}_{2^{m}}$ with lengths $q-1$ or $q$ and fixed dimensions $3 \le k \le 6$ supporting $2$-designs or $3$-designs. Xu \textit{et al.} \cite{XuG2022} developed an infinite family of NMDS codes supporting $2$-designs, with length $q$ and fixed dimension $3$ over $\mathbb{F}_{3^{m}}$ and NMDS codes supporting $3$-designs over $\mathbb{F}_{3^{2m}}$. Further, Tang and Ding \cite{T20204-design} proposed the first infinite family of NMDS codes supporting $4$-designs, with length $q+1$ and fixed dimension $6$ over $\mathbb{F}_{2^{m}}$.  \par
Although many $q$-ary NMDS codes with lengths exceeding $q+1$ were constructed and a few NMDS codes supporting $2$-designs have been developed in recent years, $q$-ary NMDS codes whose lengths exceed $q+1$ and simultaneously support $2$-designs are really rare. The well-known MDS conjecture states that the maximum possible length of a $q$-ary MDS code is $q + 1$, except for trivial parameters in even characteristic fields \cite{Tsfasman1991}. Unlike MDS codes, the length of $q$-ary NMDS codes may exceed $q+1$. For example, $q$-ary NMDS codes with any length $n\le q+2\sqrt{q}+1$ can be constructed from algebraic curves \cite {Tsfasman1991}. Denote by $m(q,k)$ the maximum possible length of an NMDS code of dimension $k$ over $\mathbb{F}_{q}$. Then we have the bound $m(q,k)\le 2q+k$ \cite{Dodunekov1995}, and NMDS codes attain this bound are said to be \emph{extremal}. A $q$-ary extremal NMDS code has parameters $[2q+k,k,2q]$, and NMDS codes with parameters $[2q+k,k+1,2q-1]$ are said to be \emph{almost extremal}. Surprisingly, the codewords of minimal weight in any $[2q+k,k,2q]$ extremal NMDS code supports a $(k-1)$-design \cite{Dodunekov1995}, and so does its dual code (see Section \ref{pre}). However, it remains open if general almost extremal NMDS codes support $t$-designs. It has been proved that the only extremal near-MDS codes with $q\ge 3$ and $k\ge 3$ are the $[12,6,6]$ extended Golay code and the codes derived from it \cite{Dodunekov1995} (see Section \ref{pre}). Moreover, $[2q+k,k,2q]$ extremal NMDS codes with $k> q$ and $[2q+k,k+1,2q-1]$ almost extremal NMDS codes with $k\ge q$ are all known \cite{Boer 1996}, as shown below.\par 
\begin{itemize}

	\item A $[2q+k,k,2q]$ extremal NMDS code over $\mathbb{F}_{q}$ with $k>q$ must be the dual of one of the codes in Table \ref{ex NMDS}.
	
	\begin{table}[h]
		\centering
		\setlength{\tabcolsep}{5mm}
		\caption{Dual codes of extremal NMDS codes with $k>q$}
		\label{ex NMDS}
		\begin{tabular}{|c|c|c|}
			\hline
			$q$ & Parameters & Descriptions \\ 
			\hline
			2 & [7,4,3] & Hamming code \\
			2 & [8,4,4] & extended Hamming code \\
			3 & [10,6,4] & punctured Golay code \\
			3 & [11,6,5] & Golay code \\
			3 & [12,6,6] & extended Golay code \\
			\hline
		\end{tabular}
	\end{table}
	
	\item A $[2q+k,k+1,2q-1]$ almost extremal NMDS code over $\mathbb{F}_{q}$ with $k\ge q$ must be the dual of one of the codes in Table \ref{al ex NMDS}.
	\begin{table}[h]
		\centering
		\setlength{\tabcolsep}{4mm}
		\caption{Dual codes of almost extremal NMDS codes with $k\ge q$}
		\label{al ex NMDS}
		\begin{tabular}{|c|c|c|}
			\hline
			$q$ & Parameters & Descriptions \\ 
			\hline
			2 & [6,3,3] & punctured Hamming code \\
			2 & [7,3,4] & Simplex code \\
			3 & [9,5,4] & shortened punctured Golay code \\
			3 & [10,5,5] & shortened Golay code \\
			3 & [11,5,6] & dual Golay code \\
			\hline
		\end{tabular}
	\end{table}
\end{itemize}
The existences and general constructions of $[2q+k,k,2q]$ extremal NMDS codes with $k\le q$ and $[2q+k,k+1,2q-1]$ almost extremal NMDS codes with $k< q$ are still open.
\subsection*{Motivation and objectives of this work}
To our best knowledge, the only known $q$-ary NMDS codes with length exceeding $q+1$ that simultaneously support $2$-designs are:
\begin{itemize}
	\item The extremal NMDS codes and their duals (listed in Table \ref{ex NMDS})
	\item The almost extremal NMDS codes and their duals (listed in Table \ref{al ex NMDS}) 
	\item Codes derived from them (see Section \ref{pre}, Corollary \ref{derived extremal})
\end{itemize}

Although these sporadic binary and ternary NMDS codes have been discovered, there remains a lack of general constructions for $q$-ary NMDS codes with length $n > q+1$ that can support $2$-designs. In this study, we develop the first generic construction of $q$-ary NMDS elliptic curve codes with length exceeding $q+1$ and simultaneously support $2$-designs. We use tools from both elliptic curves and subset sums in our construction. We first build a bridge between support designs of the minimum-weight codewords and designs from subset sums in the group of rational points. Next, we carefully choose some elliptic curves with special rational point groups and employ some known results of designs from subset sums in these finite abelian groups to derive codes supporting $2$-designs. In addition, we determine the weight distribution of the codes using tools from subset sums in finite abelian groups.
\subsection*{Organization of the Paper}
In Section \ref{pre} we first recall the famous Assmus-Mattson Theorem and some basic facts of NMDS codes. In Section \ref{elliptic curve codes} we present a generic construction of $q$-ary NMDS elliptic curve codes supporting $2$-designs whose lengths exceed $q+1$. In Section \ref{iv} we give the weight distribution of the constructed codes. Finally, Section \ref{Summary and Concluding Remarks} presents our summary and concluding remarks.

\section{PRELIMINARIES}\label{pre}
\subsection{The Assmus-Mattson Theorem and its generalized version}
The Assmus-Mattson Theorem is a useful tool to derive $t$-designs from linear codes, we begin by giving the general vision of the Assmus-Mattson Theorem.
\begin{theorem}[General version of Assmus-Mattson Theorem]\label{AM general}
	
	Let $\mathcal{C}$ be an $[n,k,d]$ linear code over $\mathbb{F}_{q}$. Let $t$  be an integer satisfying $1\leq t<\min\left\{d, d^{\perp}\right\}$. Assume that there are at most $d^{\perp}-t$ weights of $\mathcal{C}$ in $\{1,2,\ldots, n-t\}$.
	Then $\left(\mathcal{P}(\mathcal{C}),\mathcal{H}_{w}(\mathcal{C})\right)$
	and $\left(\mathcal{P}\left(\mathcal{C}^{\perp}\right),\mathcal{H}_{w}\left(\mathcal{C}^{\perp}\right)\right)$ are both $t$-designs for all $w\in\{0,1,\ldots, n\}$.
\end{theorem}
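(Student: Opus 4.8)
The plan is to reduce the entire statement to a single invariance property of shortened codes, and then to verify that property through the MacWilliams and Pless machinery. Recall the standard reformulation: for a linear code $D$ of length $N$, the supports of its weight-$w$ codewords form a $t$-design for every $w$ if and only if, for every $t$-subset $T$ of coordinates, the weight distribution of the shortened code $D_T$ (the codewords vanishing on $T$, with those positions deleted) depends only on the size $|T|=t$ and not on $T$ itself. This equivalence is pure inclusion--exclusion: the identity $[\operatorname{supp}(c)\cap T=\varnothing]=\sum_{U\subseteq T}(-1)^{|U|}[\operatorname{supp}(c)\supseteq U]$ interchanges the disjoint-from-$T$ frequencies, which are exactly the shortened weight counts, with the contains-$U$ frequencies, which are the block-incidence numbers of the putative design (up to the factor $q-1$ appearing in the definition of $\mathcal{H}_w$). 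A useful preliminary is that the hypothesis descends to every $t'\le t$: the integer interval $(n-t,\,n-t']$ has at most $t-t'$ points, so the number of weights of $\mathcal{C}$ in $\{1,\dots,n-t'\}$ is at most $(d^\perp-t)+(t-t')=d^\perp-t'$, while $t'<\min\{d,d^\perp\}$ still holds. Hence I may invoke the hypothesis at all smaller levels as well.

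First I would establish the invariance for $\mathcal{C}$ itself. Fix a $t$-set $T$. Because $t<d$, shortening is dimension-preserving in the sense that $\dim\mathcal{C}_T=k-t$, and every nonzero weight of $\mathcal{C}_T$ is a weight of $\mathcal{C}$ lying in $\{1,\dots,n-t\}$; thus $\mathcal{C}_T$ has at most $d^\perp-t$ nonzero weights, all drawn from the fixed weight set of $\mathcal{C}$. The dual of $\mathcal{C}_T$ is the punctured code $(\mathcal{C}^\perp)^{T}$; since $t<d^\perp$, puncturing $\mathcal{C}^\perp$ is injective and drops the minimum distance by at most $t$, so $(\mathcal{C}^\perp)^{T}$ has minimum distance at least $d^\perp-t$. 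Consequently the Pless power moments of $\mathcal{C}_T$ of orders $0,1,\dots,d^\perp-t-1$ are forced to values depending only on $n-t$, $k-t$, and $q$. Treating the unknown frequencies $A_w$ (indexed by the at most $d^\perp-t$ admissible weights) as variables, these $d^\perp-t$ moments form a linear system whose coefficient matrix is Vandermonde in the weights, hence invertible; the solution is expressed solely through the fixed weight set and the fixed moment values. Therefore $W_{\mathcal{C}_T}$ is identical for all $T$, so $\mathcal{C}$ supports $t$-designs for every $w$. Running the argument at each level $t'\le t$ and applying M\"obius inversion shows that, for each $S\subseteq T$, the number $N_w(S)$ of weight-$w$ codewords of $\mathcal{C}$ whose support meets $T$ exactly in $S$ depends only on $w$ and $|S|$.

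To obtain the dual designs I would transfer this invariance across the MacWilliams identity. Since $t<d$, puncturing $\mathcal{C}$ is injective, and a codeword of weight $w$ with $\operatorname{supp}(c)\cap T=S$ becomes a word of weight $w-|S|$ after deletion; hence $W_{\mathcal{C}^{T}}(z)=\sum_{S\subseteq T}\sum_w N_w(S)\,z^{\,w-|S|}$, which is manifestly independent of $T$. Thus the punctured code $\mathcal{C}^{T}$ has a $T$-independent weight distribution. Applying MacWilliams to the length-$(n-t)$ dual pair $\bigl(\mathcal{C}^{T},(\mathcal{C}^\perp)_T\bigr)$, where $(\mathcal{C}^\perp)_T=(\mathcal{C}^{T})^\perp$, the weight distribution of the shortened dual is a fixed linear image of that of $\mathcal{C}^{T}$ and so is also $T$-independent; by the first-paragraph reduction applied to $\mathcal{C}^\perp$ this says precisely that $\mathcal{C}^\perp$ supports $t$-designs for all $w$. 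The main obstacle is exactly this final transfer: the design property is governed by \emph{shortened} codes, whereas MacWilliams dualizes \emph{punctured} ones, so the crux is the bookkeeping that converts the primary designs at all levels $t'\le t$ into the $T$-independence of $W_{\mathcal{C}^{T}}$ through the intersection numbers $N_w(S)$, after which duality closes the argument. A secondary point needing care is verifying that the Pless--Vandermonde system has full rank, so that the frequencies are uniquely, and hence $T$-independently, determined.
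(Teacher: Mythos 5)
The paper does not actually prove this theorem: it is quoted as the classical (non-simple) form of the Assmus--Mattson theorem and used as a black box, so there is no internal proof to compare against. Your proposal reconstructs what is essentially the standard textbook proof: reduce the design property to the $T$-independence of the shortened weight distributions via inclusion--exclusion, pin down $W_{\mathcal{C}_T}$ through the Pless power moments using that $(\mathcal{C}_T)^\perp=(\mathcal{C}^\perp)^T$ has minimum distance at least $d^\perp-t$ while $\mathcal{C}_T$ has at most $d^\perp-t$ nonzero weights (so the Vandermonde system in the frequencies is square and invertible), and then transfer to the dual by applying MacWilliams to the length-$(n-t)$ pair $\bigl(\mathcal{C}^T,(\mathcal{C}^\perp)_T\bigr)$. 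The logic is sound, and you correctly identify the two genuinely delicate points: the descent of the hypothesis to all levels $t'\le t$ (needed for the M\"obius inversion that produces the intersection numbers $N_w(S)$) and the punctured-versus-shortened bookkeeping in the final duality step. One small correction: the equality $\dim\mathcal{C}_T=k-t$ follows from $t<d^{\perp}$ (no nonzero dual codeword is supported inside $T$, so $\mathcal{C}$ projects onto all of $\mathbb{F}_q^{T}$ and the kernel has codimension $t$), not from $t<d$; dually, $t<d$ is what makes the puncturing map $\mathcal{C}\to\mathcal{C}^{T}$ injective. Since the hypothesis gives $t<\min\{d,d^{\perp}\}$, both facts are available and this is a misattribution rather than a gap.
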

Note that Theorem \ref{AM general} applies to general $t$-designs, which may have repeated blocks or may be simple, as defined in Section \ref{introduction}. The following lemma gives a criterion for the block set $\mathcal{H}_{w}(\mathcal{C})$ to have no repeat blocks \cite{D2022}.

\begin{lemma}\label{AM lemma}
	Let $\mathcal{C}$ be an $[n,k,d]$ linear code over $\mathbb{F}_{q}$. Let $h$ be the largest integer satisfying $h\leq n$ and
	
	$$ h-\left\lfloor\frac{h+q-2}{q-1}\right\rfloor<d.$$
\end{lemma}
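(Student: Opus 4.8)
I read the lemma's intended conclusion as the assertion that the block multiset $\mathcal{H}_w(\mathcal{C})$ contains no repeated block for every $w$ with $d\le w\le h$, which is the natural completion suggested by the sentence preceding the statement. The plan is to argue contrapositively. First I would observe that, because the factor $\tfrac{1}{q-1}$ in the definition of $\mathcal{H}_w(\mathcal{C})$ already collapses each line $\{\alpha\mathbf{c}:\alpha\in\mathbb{F}_q^*\}$ to a single copy of $\operatorname{supp}(\mathbf{c})$, a genuine repeated block at weight $w$ occurs precisely when two codewords of weight $w$ share the same support yet are \emph{not} scalar multiples of one another. So I would fix two linearly independent codewords $\mathbf{c}_1,\mathbf{c}_2\in\mathcal{C}$ with $\operatorname{wt}(\mathbf{c}_1)=\operatorname{wt}(\mathbf{c}_2)=w$ and $\operatorname{supp}(\mathbf{c}_1)=\operatorname{supp}(\mathbf{c}_2)=S$, $\#S=w$, and aim to force $w$ to be large.

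The central device is the pencil of codewords $\mathbf{c}_2-\alpha\mathbf{c}_1$ for $\alpha\in\mathbb{F}_q^*$. On the common support $S$ every coordinate of both words is nonzero, so for each $i\in S$ the ratio $r_i:=(\mathbf{c}_2)_i/(\mathbf{c}_1)_i$ lies in $\mathbb{F}_q^*$, and the $i$-th coordinate of $\mathbf{c}_2-\alpha\mathbf{c}_1$ vanishes exactly when $r_i=\alpha$; outside $S$ both words already vanish. Hence $\operatorname{wt}(\mathbf{c}_2-\alpha\mathbf{c}_1)=w-\#\{i\in S:r_i=\alpha\}$. Linear independence guarantees $\mathbf{c}_2-\alpha\mathbf{c}_1\neq 0$, so each such weight is at least $d$, giving $\#\{i\in S:r_i=\alpha\}\le w-d$ for every $\alpha\in\mathbb{F}_q^*$.

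Next I would run a pigeonhole count. The multiset $\{r_i:i\in S\}$ has $w$ entries, each landing in one of the $q-1$ nonzero field elements, so some value $\alpha_0$ is attained at least $\lceil w/(q-1)\rceil$ times. Combining with the bound above forces $\lceil w/(q-1)\rceil\le w-d$, that is,
$$ w-\left\lceil\frac{w}{q-1}\right\rceil\ge d.$$
Using the identity $\lceil w/(q-1)\rceil=\lfloor (w+q-2)/(q-1)\rfloor$ rewrites this as $w-\lfloor (w+q-2)/(q-1)\rfloor\ge d$, which is exactly the negation of the defining inequality for $h$. To close the argument I would check that the map $w\mapsto w-\lfloor (w+q-2)/(q-1)\rfloor$ is nondecreasing (raising $w$ by $1$ increases the first term by $1$ and the floor term by $0$ or $1$); since $h$ is the largest integer $\le n$ for which this map lies strictly below $d$, monotonicity yields $w-\lfloor (w+q-2)/(q-1)\rfloor<d$ for all $w\le h$. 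This contradicts the displayed inequality, so no repeated block can occur at any weight $w$ with $d\le w\le h$.

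I expect the obstacles here to be bookkeeping rather than conceptual. The two points needing care are justifying that dividing multiplicities by $q-1$ makes ``same support up to a scalar'' the correct equivalence, so that a true repetition really forces linear independence (and hence $\mathbf{c}_2-\alpha\mathbf{c}_1\neq0$); and cleanly converting the ceiling into the stated floor expression while confirming the monotonicity that lets the single inequality verified at $h$ propagate downward across the whole range $d\le w\le h$.
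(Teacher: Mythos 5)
Your argument is correct. The paper itself gives no proof of this lemma --- it is quoted from the reference [D2022] (and its stated conclusion, ``there are no repeated blocks in $\mathcal{H}_{w}(\mathcal{C})$ for $d\leq w\leq h$,'' has simply slipped outside the lemma environment in the source) --- and your proof is essentially the standard one from that reference: reduce a repeated block to two linearly independent codewords $\mathbf{c}_1,\mathbf{c}_2$ with common support $S$ (possible because a multiplicity exceeding $q-1$ in the undivided multiset forces two non-proportional codewords), apply the pigeonhole bound $\lceil w/(q-1)\rceil\le w-d$ via the pencil $\mathbf{c}_2-\alpha\mathbf{c}_1$, convert the ceiling to the floor expression, and propagate the contradiction down to all $w\le h$ by the monotonicity of $w\mapsto w-\lfloor (w+q-2)/(q-1)\rfloor$. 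All of these steps check out, including the two bookkeeping points you flagged.
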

Then there are no repeated blocks in $\mathcal{H}_{w}(\mathcal{C})$ for any $d\leq w\leq h$.

Combining Theorem \ref{AM general} and Lemma \ref{AM lemma} together, we obtain the following well-known Assmus-Mattson Theorem \cite{Assmus 1969} for constructing simple $t$-designs from linear codes.

\begin{theorem}[Assmus-Mattson Theorem]\label{AM}
	
	Let $\mathcal{C}$ be an $[n,k,d]$ linear code over $\mathbb{F}_{q}$. Let $t$  be an integer satisfying $1\leq t<\min\left\{d, d^{\perp}\right\}$. Assume that there are at most $d^{\perp}-t$ weights of $\mathcal{C}$ in the range $\{1,2,\ldots, n-t\}$. Then $\left(\mathcal{P}(\mathcal{C}),\mathcal{H}_{w}(\mathcal{C})\right)$
	and $\left(\mathcal{P}\left(\mathcal{C}^{\perp}\right),\mathcal{H}_{w}\left(\mathcal{C}^{\perp}\right)\right)$ are both $t$-designs for all $w\in\{0,1,\ldots, n\}$. In particular,
	
	\begin{itemize}
		
		\item $\left(\mathcal{P}(\mathcal{C}),\mathcal{H}_{w}(\mathcal{C})\right)$ is a simple $t$-design provided that $A_{w}\neq 0$ and $d\leq w\leq h$, where $h$ is the largest integer satisfying $h\leq n$ and
		
		$$h-\left\lfloor\frac{h+q-2}{q-1}\right\rfloor<d.$$
		
		\item  $\left(\mathcal{P}\left(\mathcal{C}^{\perp}\right),\mathcal{H}_{w}\left(\mathcal{C}^{\perp}\right)\right)$ is a simple $t$-design provided that $A_{w}^{\perp}\neq 0$ and $d^{\perp}\leq w\leq h^{\perp}$, where
		$h^{\perp}$ is the largest integer satisfying $h^{\perp}\leq n$ and
		
		$$h^{\perp}-\left\lfloor\frac{h^{\perp}+q-2}{q-1}\right\rfloor<d^{\perp}.$$
	\end{itemize}
\end{theorem}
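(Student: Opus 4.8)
The plan is to obtain the theorem by combining the two results already established, namely Theorem \ref{AM general} and Lemma \ref{AM lemma}; no new machinery is required, and the argument is essentially a matter of bookkeeping.

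First, the main assertion---that $\left(\mathcal{P}(\mathcal{C}),\mathcal{H}_{w}(\mathcal{C})\right)$ and $\left(\mathcal{P}(\mathcal{C}^{\perp}),\mathcal{H}_{w}(\mathcal{C}^{\perp})\right)$ are $t$-designs for every $w\in\{0,1,\ldots,n\}$---is verbatim the conclusion of Theorem \ref{AM general}. Since the hypotheses here, that $1\le t<\min\{d,d^{\perp}\}$ and that $\mathcal{C}$ has at most $d^{\perp}-t$ weights in $\{1,2,\ldots,n-t\}$, coincide exactly with those of Theorem \ref{AM general}, this part follows immediately.

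Next I would establish the two ``in particular'' clauses, which upgrade the designs to \emph{simple} designs on a restricted range of weights. For the code $\mathcal{C}$, I apply Lemma \ref{AM lemma}: with $h$ the largest integer $\le n$ satisfying $h-\left\lfloor\frac{h+q-2}{q-1}\right\rfloor<d$, the lemma guarantees that $\mathcal{H}_{w}(\mathcal{C})$ contains no repeated blocks whenever $d\le w\le h$. Combined with the $t$-design property from the previous step and the hypothesis $A_{w}\neq 0$, which ensures the block set is nonempty, this yields a genuine simple $t$-design. For the dual, the only point to check is that Lemma \ref{AM lemma} applies to $\mathcal{C}^{\perp}$ as well; but that lemma concerns an arbitrary $[n,k,d]$ code and its conclusion depends only on $n$, $q$, and the minimum distance. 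Viewing $\mathcal{C}^{\perp}$ as an $[n,n-k,d^{\perp}]$ code and substituting $d^{\perp}$ for $d$ therefore gives the threshold $h^{\perp}$ defined by $h^{\perp}-\left\lfloor\frac{h^{\perp}+q-2}{q-1}\right\rfloor<d^{\perp}$, together with the absence of repeated blocks in $\mathcal{H}_{w}(\mathcal{C}^{\perp})$ for $d^{\perp}\le w\le h^{\perp}$; with $A_{w}^{\perp}\neq 0$ this produces the asserted simple $t$-design.

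There is no genuine obstacle at this level of the argument---the content is purely a matter of assembling Theorem \ref{AM general} and Lemma \ref{AM lemma} and keeping the roles of $\mathcal{C}$ and $\mathcal{C}^{\perp}$ straight. The substantive work lies entirely inside Theorem \ref{AM general}, which is taken as given here; were one to prove that statement from scratch, the crux would be a MacWilliams-transform (Pless power-moment) computation showing that the number of minimum-weight codewords whose support contains a fixed $t$-subset of coordinates is independent of the chosen $t$-subset, which is precisely what forces the incidence structure to be a $t$-design.
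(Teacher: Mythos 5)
Your proposal is correct and matches the paper exactly: the paper derives Theorem \ref{AM} by the same combination of Theorem \ref{AM general} (for the $t$-design conclusion) with Lemma \ref{AM lemma} applied to $\mathcal{C}$ and to $\mathcal{C}^{\perp}$ (for the absence of repeated blocks). No further comment is needed.
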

For a period of time, there were two major approaches of constructing $t$-designs, one was through the Assmus-Mattson Theorem \cite{D2022, D2018,DL2017,DT2020}, and another was employing linear codes with $t$-homogeneous or $t$-transitive automorphism groups \cite{D2022, Huffman 2003}. In order to provide a theoretical explanation to some $t$-designs from linear codes \cite{D2019 Vector, Tang Ding xiong 2019} that neither satisfy the conditions of the Assmus-Mattson theorem nor admit $t$-transitive or $t$-homogeneous automorphism groups, Tang, Ding and Xiong \cite{Tang Ding xiong 2020} developed a generalized version of the Assmus-Mattson theorem in 2020.

\begin{theorem}[Generalized Assmus-Mattson theorem]\label{AM generalized}
	
	Let $\mathcal{C}$ be an $[n,k,d]$ linear code over $\mathbb{F}_{q}$. Let $s$ and $t$ be two positive integers satisfy $t<\min\left\{d, d^{\perp}\right\}$. Let $S$ be a $s$-subset of $\{d,\ldots, n-t\}$. Assume that $(\mathcal{P}(C),\mathcal{H}_{\ell}(C))$ and $\left(\mathcal{P}\left(C^{\perp}\right),\mathcal{H}_{\ell^{\perp}}\left(C^{\perp}\right)\right)$ are $t$-designs for any $\ell\in\{d,\ldots, n-t\}\backslash S$ and $0\leq \ell^{\perp}\leq s+t-1$. Then $\left(\mathcal{P}(C),\mathcal{H}_{w}(C)\right)$ and $\left(\mathcal{P}\left(C^{\perp}\right),\mathcal{H}_{w}\left(C^{\perp}\right)\right)$ are $t$-designs for all $w\in\{0,1,\ldots, n\}$. In particular,
	
	\begin{itemize}
		\item $(\mathcal{P}(\mathcal{C}),\mathcal{H}_{w}(\mathcal{C}))$ is a simple $t$-design for any $w$ with $d\leq w\leq h$, where $h$ the largest integer satisfying $h\leq n$ and
		$$ h-\left\lfloor\frac{h+q-2}{q-1}\right\rfloor<d $$
		
		\item and $\left(\mathcal{P}\left(\mathcal{C}^{\perp}\right),\mathcal{H}_{w}\left(\mathcal{C}^{\perp}\right)\right)$ is a simple $t$-design for any $w$ with $d\leq w\leq h^{\perp}$, where $h^{\perp}$ is the largest integer satisfying $h^{\perp}\leq n$ and
		$$ h^{\perp}-\left\lfloor\frac{h^{\perp}+q-2}{q-1}\right\rfloor<d^{\perp}. $$
	\end{itemize}
\end{theorem}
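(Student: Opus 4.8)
The plan is to prove the statement through the device common to every Assmus–Mattson-type result: reduce the assertion ``$\mathcal{C}$ holds $t$-designs for all $w$'' to the single combinatorial condition that the weight distribution of the shortened code is unchanged no matter which $t$ coordinates are removed, and then upgrade the partial hypotheses to this full invariance by a Pless-power-moment (Vandermonde) argument. For a $t$-subset $T$ of coordinates, write $\mathcal{C}_T=\{\mathbf{c}\in\mathcal{C}:c_i=0\ \forall i\in T\}$ for the shortened code and $a_w(T)$ for the number of its weight-$w$ words. The first step is to record the (known) equivalence: since $t<d$, the structure $(\mathcal{P}(\mathcal{C}),\mathcal{H}_w(\mathcal{C}))$ is a $t$-design for every $w$ if and only if $a_w(T)$ is independent of $T$ for all $t$-subsets $T$ and all $w$. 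This I would obtain by Möbius inversion relating the ``avoids $T$'' counts $a_w(T)$ to the ``contains $J$'' counts for $J\subseteq T$, plus a short downward induction on $|J|$ (using that a $t$-design is automatically a $j$-design for $j\le t$, as recalled in Section~\ref{introduction}); the same equivalence holds verbatim for $\mathcal{C}^{\perp}$. Once both codes are shown to hold $t$-designs for all weights, the ``in particular'' simple-design clauses follow immediately from the no-repeated-blocks criterion of Lemma~\ref{AM lemma} with the stated $h$ and $h^{\perp}$.

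Next I would control the dual side by puncturing. Because $t<d^{\perp}$, no nonzero word of $\mathcal{C}^{\perp}$ is supported inside $T$, so deleting the coordinates of $T$ gives a weight-preserving bijection identifying $(\mathcal{C}_T)^{\perp}$ with the punctured code $\pi_T(\mathcal{C}^{\perp})$. Hence the number $a_j^{\perp}(T)$ of weight-$j$ words of $(\mathcal{C}_T)^{\perp}$ equals $\sum_{i=0}^{t}N_{j+i,\,i}(T)$, where $N_{v,i}(T)$ counts words of $\mathcal{C}^{\perp}$ of weight $v$ meeting $T$ in exactly $i$ positions. For $j\le s-1$ only weights $v=j+i\le s+t-1$ arise, and for precisely these weights the hypothesis provides that the weight-$v$ supports of $\mathcal{C}^{\perp}$ form a $t$-design. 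Expanding each $N_{v,i}(T)$ by inclusion–exclusion in terms of the numbers of blocks containing the subsets of $T$ — all constant by the design hypothesis — then shows $a_j^{\perp}(T)$ is independent of $T$ for every $j\le s-1$.

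The third step sets up and solves the linear system. Applying the first $s$ Pless power moments to the length-$(n-t)$, dimension-$(k-t)$ code $\mathcal{C}_T$, each quantity $\sum_w w^{\nu}a_w(T)$ with $0\le\nu\le s-1$ is a fixed linear combination of $a_0^{\perp}(T),\dots,a_{\nu}^{\perp}(T)$ whose coefficients depend only on $n-t$, $k-t$, $q$ and $\nu$, hence is independent of $T$ by the previous step. Only weights in $\{0\}\cup\{d,\dots,n-t\}$ contribute, and by the $\mathcal{C}$-design hypothesis every such weight outside $S$ already yields a $T$-independent $a_w(T)$; transposing these gives, for $\nu=0,1,\dots,s-1$,
\[
\sum_{w\in S}w^{\nu}\,a_w(T)=\bigl(\text{a constant independent of }T\bigr).
\]
The coefficient matrix $\bigl(w^{\nu}\bigr)_{0\le\nu\le s-1,\ w\in S}$ is Vandermonde in the $s$ distinct weights of $S$, hence invertible, so every $a_w(T)$ with $w\in S$ is a $T$-independent combination of $T$-independent constants. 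Thus $a_w(T)$ is independent of $T$ for all $w$, and Step~1 gives that $\mathcal{C}$ holds $t$-designs for all weights. The dual conclusion then needs no further hypothesis: this full invariance makes the weight distribution of $\pi_T(\mathcal{C})=\bigl((\mathcal{C}^{\perp})_T\bigr)^{\perp}$ independent of $T$ at every weight, so by the MacWilliams identity the weight distribution of $(\mathcal{C}^{\perp})_T$ is independent of $T$, and Step~1 applied to $\mathcal{C}^{\perp}$ finishes the argument.

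I expect the genuine work to sit in the second step: converting the $t$-design hypotheses on $\mathcal{C}^{\perp}$ into true $T$-independence of the refined intersection counts $N_{v,i}(T)$. A $t$-design controls only the number of blocks containing a prescribed $i$-set for $i\le t$, so one must check that the inclusion–exclusion passage to ``meets $T$ in exactly $i$ points'' never invokes an intersection number for an $i$-set with $i>t$, and one must track the $1/(q-1)$ normalization linking codeword counts to block counts. Pinning down the exact weight ceiling in this computation is precisely what forces the bound $s+t-1$ in the hypothesis, and it is the place where an oversight would be easiest to make; by contrast the reduction in Step~1 and the Vandermonde inversion in Step~3 are routine.
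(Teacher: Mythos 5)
The paper itself states this theorem without proof, quoting it from Tang, Ding and Xiong \cite{Tang Ding xiong 2020}, and your proposal correctly reconstructs essentially the argument given in that source: the characterization of supporting $t$-designs at all weights via $T$-independence of the weight distribution of the shortened codes $\mathcal{C}_T$, the transfer of the low-weight dual design hypotheses (weights up to $s+t-1$) into $T$-independence of the first $s$ coefficients of the weight distribution of $(\mathcal{C}_T)^{\perp}=(\mathcal{C}^{\perp})^{T}$ via the exact-intersection counts $N_{v,i}(T)$, and the Pless-power-moment/Vandermonde inversion over the $s$ unknown weights in $S$. I see no gap; this is the same route as the original proof.
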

Note that in Theorem \ref{AM generalized}, if the linear code $\mathcal{C}$ is NMDS, then we have the following corollary.
\begin{corollary}\label{coro AM near}
	Let $\mathcal{C}$ be a near MDS code with parameters $[n,k,n-k]$. If $\min\{k,n-k\}\ge 3$ and the minimum-weight codewords in $\mathcal{C}$ support a $2$-design, then $\left(\mathcal{P}(C),\mathcal{H}_{w}(C)\right)$ and $\left(\mathcal{P}\left(C^{\perp}\right),\mathcal{H}_{w}\left(C^{\perp}\right)\right)$ are $2$-designs for any $w$ with $2\leq w\leq n$.
\end{corollary}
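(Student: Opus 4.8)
The plan is to invoke the Generalized Assmus--Mattson Theorem (Theorem \ref{AM generalized}) with the design parameter $t=2$, and to exploit the rigid structure of NMDS weight supports to reduce the required hypotheses to the single fact we are handed. First I would record the parameters attached to an NMDS code: since $\mathcal{C}$ is $[n,k,n-k]$ and AMDS, its dual $\mathcal{C}^{\perp}$ is $[n,n-k,k]$, so that $d=n-k$ and $d^{\perp}=k$. The hypothesis $\min\{k,n-k\}\ge 3$ then gives $t=2<\min\{d,d^{\perp}\}=\min\{n-k,k\}$, which is exactly the standing assumption needed to run Theorem \ref{AM generalized}.

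Next I would select the subset $S$ of $\{d,\ldots,n-t\}=\{n-k,\ldots,n-2\}$. The natural choice is $S=\{n-k+1,n-k+2,\ldots,n-2\}$, whose cardinality is $s=(n-2)-(n-k+1)+1=k-2$; because $k\ge 3$ this is a positive integer, as the theorem demands. With this choice the complement $\{d,\ldots,n-t\}\setminus S$ reduces to the single weight $d=n-k$. By hypothesis the minimum-weight codewords of $\mathcal{C}$ support a $2$-design, i.e. $\bigl(\mathcal{P}(\mathcal{C}),\mathcal{H}_{n-k}(\mathcal{C})\bigr)$ is a $2$-design, so the first family of hypotheses of Theorem \ref{AM generalized} (those concerning $\mathcal{C}$) is verified with nothing further to prove.

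It then remains to check the dual condition, namely that $\bigl(\mathcal{P}(\mathcal{C}^{\perp}),\mathcal{H}_{\ell^{\perp}}(\mathcal{C}^{\perp})\bigr)$ is a $2$-design for every $\ell^{\perp}$ with $0\le \ell^{\perp}\le s+t-1$. Here the decisive observation is the arithmetic identity $s+t-1=(k-2)+2-1=k-1<k=d^{\perp}$: the entire range $\{0,1,\ldots,s+t-1\}$ lies strictly below the minimum distance of $\mathcal{C}^{\perp}$. Consequently $A_{\ell^{\perp}}^{\perp}=0$ for $1\le \ell^{\perp}\le k-1$, so each $\mathcal{H}_{\ell^{\perp}}(\mathcal{C}^{\perp})$ is the empty block set (and $\mathcal{H}_{0}$ is trivial), which counts as a degenerate $2$-design vacuously. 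This supplies the second family of hypotheses at no cost.

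With both families of hypotheses in hand, Theorem \ref{AM generalized} concludes that $\bigl(\mathcal{P}(\mathcal{C}),\mathcal{H}_{w}(\mathcal{C})\bigr)$ and $\bigl(\mathcal{P}(\mathcal{C}^{\perp}),\mathcal{H}_{w}(\mathcal{C}^{\perp})\bigr)$ are $2$-designs for all $w\in\{0,1,\ldots,n\}$, hence in particular for every $w$ with $2\le w\le n$, which is the assertion. The only genuinely delicate point---and the step I would double-check most carefully---is the bookkeeping that forces the whole dual index range $\{0,\ldots,s+t-1\}$ below $d^{\perp}$; this is precisely what lets the single assumed design $\mathcal{H}_{n-k}(\mathcal{C})$ propagate to all weights, and it is the inequality $\min\{k,n-k\}\ge 3$ that simultaneously secures $t<\min\{d,d^{\perp}\}$ and the positivity $s=k-2\ge 1$ required of the subset size.
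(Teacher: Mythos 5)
Your proof is correct and follows essentially the same route as the paper: both invoke the Generalized Assmus--Mattson Theorem with $t=2$, $S=\{n-k+1,\ldots,n-2\}$ and $s=k-2$, reducing the hypotheses to the assumed $2$-design at weight $n-k$ plus the vacuous dual weights below $d^{\perp}=k$. Your write-up is in fact more explicit than the paper's about why the dual-side conditions hold trivially, but there is no substantive difference in approach.
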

\begin{IEEEproof}
	The parameters of $\mathcal{C}$ are $[n,k,n-k]$ and the parameters of the dual code $\mathcal{C}^\perp$ is $[n,n-k,k]$. Let $ S=\left \{n-k+1,\ldots,n-2 \right \}$, $s=k-2$. Let $t=2$, then $s+t-1=k-1\le k$, thus  $(\mathcal{P}(C),\mathcal{H}_{w}(C))$ and $\left(\mathcal{P}\left(C^{\perp}\right),\mathcal{H}_{w}\left(C^{\perp}\right)\right)$ are $2$-designs for all $w\in\{0,1,\ldots, n\}$.
\end{IEEEproof}
\subsection{NMDS codes and related $t$-designs}

An AMDS code has parameters of the form $[n,k,n-k]$. AMDS codes with dimensions 1, $n-2$, $n-1$, or $n$ are considered trivial. Since constructing trivial AMDS codes of arbitrary lengths is straightforward, we only focus on nontrivial AMDS codes. Unlike MDS codes, the dual of an AMDS code may not necessarily be AMDS. A code $\mathcal{C}$ is an NMDS code if both $\mathcal{C}$ and its dual $\mathcal{C}^\perp$ are AMDS. By definition, $\mathcal{C}$ is NMDS if and only if $\mathcal{C}^\perp$ is NMDS.

The following theorem outlines some fundamental characteristics of AMDS codes.

\begin{theorem}[\cite{Dodunekov1995}]
	For an AMDS code $\mathcal{C}$ with parameters $[n,k,n-k]$ over $\mathbb{F}_q$:
	\begin{itemize}
		\item When $k \geq 2$, the code length satisfies $n \leq k + 2q$.
		\item For $k \geq 2$ and $n-k > q$, the dimension is bounded by $k \leq 2q$.
		\item If $n-k > q$, the dual code $\mathcal{C}^\perp$ is also AMDS.
		\item For $k \geq 2$, the code is generated by codewords having weights $n-k$ and $n-k+1$.
		\item When $k \geq 2$ and $n-k > q$, the code is generated solely by its minimum weight codewords.
	\end{itemize}
\end{theorem}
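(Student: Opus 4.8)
The plan is to prove all five items from a single mechanism, which I will call the \emph{shortening-to-a-plane} technique: from an AMDS code one extracts a two-dimensional subcode supported on very few coordinates, and then exploits the fact that a two-dimensional code is completely governed by the multiplicities with which the columns of its generator matrix occupy the $q+1$ points of the projective line $\mathrm{PG}(1,q)$. Concretely, I would shorten $\mathcal{C}$ at any $k-2$ coordinates, obtaining a subcode of dimension $\ge 2$ whose support has size at most $n-k+2$; restricting a two-dimensional subcode $D$ to its support gives a code of length $\ell\le n-k+2$, dimension $2$, and minimum weight $\ge n-k$ (being a subcode of $\mathcal{C}$). Writing $m_P$ for the multiplicity of the point $P\in\mathrm{PG}(1,q)$ among the $\ell$ columns, the nonzero codeword determined by $\mathbf{x}$ has exactly $m_{P(\mathbf{x})}$ zeros, where $P(\mathbf{x})$ is the point orthogonal to $\mathbf{x}$; hence its weight is $\ell-m_{P(\mathbf{x})}$, so $\max_P m_P\le \ell-(n-k)$ while $\sum_P m_P=\ell$. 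This dictionary drives everything.

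For the length bound (first item) I would run the averaging
\[
\ell=\sum_{P} m_P\le (q+1)\max_P m_P\le (q+1)\bigl(\ell-(n-k)\bigr),
\]
which rearranges to $n-k\le \tfrac{q}{q+1}\ell\le \tfrac{q}{q+1}(n-k+2)$ and thus to $n-k\le 2q$, i.e.\ $n\le k+2q$. For the duality statement (third item) I would first establish $d_2(\mathcal{C})=n-k+2$, where $d_2$ denotes the smallest support of a two-dimensional subcode: the shortening construction gives $d_2\le n-k+2$, and if some plane had support exactly $n-k+1$ it would restrict to a $[n-k+1,2,n-k]$ \emph{MDS} code, forcing $n-k+1\le q+1$ and contradicting $n-k>q$. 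Then, if some $k-1$ columns of a generator matrix $G$ were linearly dependent, the orthogonal complement (inside the message space) of their span would have dimension $\ge 2$ and yield a two-dimensional subcode supported on the remaining $n-k+1$ coordinates, contradicting $d_2=n-k+2$; hence every $k-1$ columns of $G$ are independent, so $d^\perp\ge k$, and since $\mathcal{C}$ is not MDS we conclude $d^\perp=k$, i.e.\ $\mathcal{C}^\perp$ is AMDS. The dimension bound (second item) is then immediate: apply the first item to the AMDS code $\mathcal{C}^\perp$, whose length $n$ is at most its dimension $n-k$ plus $2q$, giving $k\le 2q$.

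For the generation statements I would induct on weight. For the fourth item, given $\mathbf{c}$ of weight $w\ge n-k+2$, its zero set $Z$ has size $\le k-2$, so the shortened code $\mathcal{C}_Z$ of codewords vanishing off $\operatorname{supp}(\mathbf{c})$ has dimension $\ge 2$; a Singleton count shows $\mathcal{C}_Z$ cannot consist solely of full-support codewords, so it contains some $\mathbf{b}$ of weight $<w$, and choosing $\lambda\in\mathbb{F}_q^{\ast}$ to cancel a coordinate expresses $\mathbf{c}=(\mathbf{c}-\lambda\mathbf{b})+\lambda\mathbf{b}$ as a sum of two codewords of weight $<w$, closing the induction down to weights $n-k$ and $n-k+1$. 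For the fifth item it then suffices to write each weight-$(n-k+1)$ codeword $\mathbf{c}$ as a sum of two minimum-weight codewords. Here I would shorten at $k-2$ of the $k-1$ zeros of $\mathbf{c}$, getting a two-dimensional subcode $D\ni\mathbf{c}$ whose support has size $n-k+1$ or $n-k+2$ but must equal $n-k+2$ (size $n-k+1$ again gives an MDS contradiction with $n-k>q$). The multiplicity bound forces every $m_P\le 2$, and if $a,b$ count the points of multiplicity $2,1$ then $2a+b=n-k+2$ and $a+b\le q+1$ give $a\ge n-k+1-q\ge 2$ precisely because $n-k>q$; any two of these multiplicity-$2$ directions are minimum-weight codewords spanning $D$, so $\mathbf{c}$ is a nonzero combination of exactly two of them.

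The genuinely delicate point will be the fifth item: the hypothesis $n-k>q$ must be converted, through the exact $\mathrm{PG}(1,q)$ count, into the existence of \emph{two distinct} minimum-weight directions inside the relevant plane, and one must simultaneously rule out the degenerate collapse of that plane to $n-k+1$ coordinates. Keeping the shortening bookkeeping consistent---checking at each step that the shortened or restricted code retains dimension $\ge 2$ and minimum distance $\ge n-k$, and that scalings preserve minimum weight---is where the care concentrates; once the two-dimensional dictionary and these verifications are in place, the remaining items follow comfortably.
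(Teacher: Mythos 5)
Your proof is correct, but there is nothing in the paper to compare it against: the paper states this theorem purely as a cited result from Dodunekov--Landjev \cite{Dodunekov1995} and gives no proof of any of the five items. Your self-contained argument via the second generalized Hamming weight $d_2$ and the $\mathrm{PG}(1,q)$ column-multiplicity dictionary is essentially the approach of the original sources (Dodunekov--Landjev and de Boer), and all the steps check out: the averaging $\ell/(q+1)\le\max_P m_P\le \ell-(n-k)$ does yield $n-k\le 2q$; the identification $d_2=n-k+2$ under $n-k>q$ (ruling out a $[n-k+1,2,n-k]$ MDS restriction of length $>q+1$) correctly forces every $k-1$ columns of $G$ to be independent and hence $d^\perp=k$; deducing the dimension bound by applying the length bound to $\mathcal{C}^\perp$ is legitimate because your proof of the duality item does not use the dimension bound; the weight-reduction induction for the fourth item and the count $a\ge n-k+1-q\ge 2$ of multiplicity-$2$ points for the fifth item are both sound. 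The only bookkeeping worth making explicit is that the shortening-at-$(k-2)$-coordinates device presupposes $k\ge 2$, so the third item for $k\le 1$ (where $n-k>q$ can still hold) needs the separate, trivial observation that a $[n,1,n-1]$ code has a zero coordinate in its generator and hence an AMDS dual; with that footnote your argument is complete.
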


The subsequent theorem provides a characterization of NMDS codes from their generator matrices.

\begin{theorem}[\cite{Dodunekov1995}]
	A code $\mathcal{C}$ with parameters $[n,k]$ over $\mathbb{F}_q$ is NMDS if and only if any generator matrix $G$ (and consequently every generator matrix of $\mathcal{C}$) satisfies:
	\begin{itemize}
		\item Any subset of $k-1$ columns in $G$ is linearly independent.
		\item There exists a set of $k$ columns that are linearly dependent.
		\item Every set of $k+1$ columns has rank $k$.
	\end{itemize}
\end{theorem}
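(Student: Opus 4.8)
The plan is to translate both AMDS requirements---the one on $\mathcal{C}$ and the one on $\mathcal{C}^\perp$---into column properties of $G$, using two standard facts that come from $G$ being simultaneously a generator matrix of $\mathcal{C}$ and a parity-check matrix of $\mathcal{C}^\perp$. First I would record these two characterizations. Writing $g_1,\dots,g_n$ for the columns of $G$, the minimum distance $d^\perp$ of the dual equals the smallest number of linearly dependent columns of $G$ (the usual parity-check description of minimum distance). On the primal side, a nonzero codeword $xG$ vanishes exactly on the positions $i$ with $x\cdot g_i=0$, and such an $x\neq 0$ exists annihilating a set $S$ of columns precisely when those columns span a subspace of dimension less than $k$, i.e. when $\rank(G_S)<k$. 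Hence the maximum number of zero coordinates of a nonzero codeword is $r:=\max\{|S|:\rank(G_S)<k\}$, giving the clean identity $d=n-r$.

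Next I would dispose of the dual side. Since $\mathcal{C}^\perp$ has length $n$ and dimension $n-k$, it is AMDS exactly when $d^\perp=n-(n-k)=k$. By the parity-check characterization, $d^\perp=k$ is equivalent to the conjunction of two statements: every $k-1$ columns of $G$ are linearly independent (which forces $d^\perp\ge k$) and some set of $k$ columns is linearly dependent (which forces $d^\perp\le k$). These are precisely the first and second bullet conditions.

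Then I would treat $\mathcal{C}$ itself. It is AMDS iff $d=n-k$, which by $d=n-r$ is equivalent to $r=k$. The inequality $r\ge k$ says some $k$ columns have rank below $k$, i.e. are linearly dependent---again the second bullet condition. The inequality $r\le k$ says no set of more than $k$ columns can have deficient rank; since every larger column set contains a $(k+1)$-subset, and $\rank(G_S)\le k$ holds automatically because $G$ has $k$ rows, this is equivalent to every $k+1$ columns having rank $k$---the third bullet condition. Combining the two reductions, $\mathcal{C}$ is NMDS iff both $\mathcal{C}$ and $\mathcal{C}^\perp$ are AMDS, which holds iff all three bullet conditions hold at once, with the second condition shared between them.

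Finally, I would justify the parenthetical ``any, hence every'' generator matrix: two generator matrices differ by $G'=PG$ for an invertible $k\times k$ matrix $P$, and $G'_S=PG_S$ has the same rank as $G_S$ while the linear-dependence relations among columns are unchanged, so all three properties are invariant under the choice of $G$. The only point that demands care is the bookkeeping behind the equivalence $d=n-r$, relating zero-coordinate sets of codewords to the ranks of column submatrices; once that correspondence is set up, every remaining step is a direct inequality comparison, so I do not anticipate a substantial obstacle.
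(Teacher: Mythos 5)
The paper does not prove this statement at all; it is quoted verbatim from Dodunekov and Landjev \cite{Dodunekov1995} as background, so there is no in-paper argument to compare against. Your proof is correct and is the standard one. The two key identities you set up are both right: viewing $G$ as a parity-check matrix of $\mathcal{C}^\perp$ gives $d^\perp$ as the minimal size of a dependent column set, so $d^\perp=k$ is exactly bullets one and two; and the identity $d=n-r$ with $r=\max\{|S|:\operatorname{rank}(G_S)<k\}$ holds because the zero set of a nonzero codeword $xG$ is precisely the set of columns lying in the hyperplane $x^\perp$, and conversely any rank-deficient column set sits inside such a hyperplane, so $r=k$ is exactly bullets two and three (using monotonicity of rank to reduce ``no deficient set of size $>k$'' to the $(k{+}1)$-subset condition). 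The invariance under change of generator matrix via $G'=PG$ is also handled correctly. One tiny presentational point: you could note explicitly that bullet two is implied by the conjunction of the other requirements on each side separately (it is the shared ``$\le$'' direction for both $d$ and $d^\perp$), which is exactly why the characterization needs only three conditions rather than four; but this is cosmetic, and the argument as written is complete.
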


The weight distributions of NMDS codes and their duals can be determined using the following formulas.

\begin{theorem}[\cite{Dodunekov1995}]\label{weight distribution NMDS}
	For an NMDS code $\mathcal{C}$ with parameters $[n,k,n-k]$, the weight
	distributions of $\mathcal{C}$ and $\mathcal{C}^\perp$ are given by
	\begin{align}
		A_{n-k+s}
		={}& \binom{n}{k-s}
		\sum_{j=0}^{s-1} (-1)^j \binom{n-k+s}{j} (q^{s-j}-1) \notag\\
		&\quad + (-1)^s \binom{k}{s} A_{n-k},
	\end{align}
	for $s \in \{1,2,\ldots,k\}$, and
	\begin{align}
		A_{k+s}^\perp
		={}& \binom{n}{k+s}
		\sum_{j=0}^{s-1} (-1)^j \binom{k+s}{j} (q^{s-j}-1) \notag\\
		&\quad + (-1)^s \binom{n-k}{s} A_k^\perp,
	\end{align}
	for $s \in \{1,2,\ldots,n-k\}$.
\end{theorem}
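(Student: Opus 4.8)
The plan is to read off the whole weight distribution of $\mathcal{C}$ from the two vanishing patterns forced by the NMDS property and then feed them into the MacWilliams identities; the formula for $\mathcal{C}^{\perp}$ will follow by symmetry. Since $\mathcal{C}$ is NMDS with parameters $[n,k,n-k]$, its dual $\mathcal{C}^{\perp}$ is also NMDS, with parameters $[n,n-k,k]$. First I would record the vanishing conditions $A_{0}=1$, $A_{i}=0$ for $1\le i\le n-k-1$, and $A_{0}^{\perp}=1$, $A_{j}^{\perp}=0$ for $1\le j\le k-1$. I will prove only the formula for $A_{n-k+s}$; the dual formula is obtained from it by interchanging the roles of $\mathcal{C}$ and $\mathcal{C}^{\perp}$, that is, by the substitution $k\mapsto n-k$, which sends $\binom{n}{k-s}\mapsto\binom{n}{k+s}$, the distance $n-k\mapsto k$, $\binom{k}{s}\mapsto\binom{n-k}{s}$ and $A_{n-k}\mapsto A_{k}^{\perp}$, and one checks that this reproduces the second displayed identity verbatim.

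The main tool is the binomial moment identity, a standard consequence of the MacWilliams transform: for every $\nu$ with $0\le\nu\le n$,
\[
\sum_{i=0}^{n}\binom{n-i}{\nu}A_{i}=q^{k-\nu}\sum_{j=0}^{\nu}\binom{n-j}{n-\nu}A_{j}^{\perp}.
\]
I would restrict to $0\le\nu\le k-1$. On the right-hand side the only surviving term is $j=0$, because $A_{j}^{\perp}=0$ for $1\le j\le k-1$, so it collapses to $q^{k-\nu}\binom{n}{\nu}$. On the left-hand side the terms with $1\le i\le n-k-1$ vanish; isolating the $i=0$ term and writing $i=n-k+s$ (so $n-i=k-s$), the identity becomes the triangular system
\[
\sum_{s=0}^{k-\nu}\binom{k-s}{\nu}A_{n-k+s}=\left(q^{k-\nu}-1\right)\binom{n}{\nu},\qquad 0\le\nu\le k-1 .
\]
As $\nu$ runs from $k-1$ down to $0$ this gives $k$ equations in the $k+1$ unknowns $A_{n-k},A_{n-k+1},\dots,A_{n}$; the equation indexed by $\nu=k-s$ has leading term $A_{n-k+s}$ with coefficient $\binom{k-s}{k-s}=1$, so the system is lower triangular and determines every $A_{n-k+s}$ uniquely in terms of the single free parameter $A_{n-k}$.

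Finally I would solve this recursion in closed form and match it to the claimed expression, most safely by induction on $s$ with base case $s=1$, where $\nu=k-1$ gives $A_{n-k+1}=(q-1)\binom{n}{k-1}-kA_{n-k}$, in agreement with the formula. Splitting $A_{n-k+s}$ into its $A_{n-k}$-coefficient $c_{s}$ and its constant part $b_{s}$, the recursion forces $c_{s}=-\sum_{s'=0}^{s-1}\binom{k-s'}{k-s}c_{s'}$ with $c_{0}=1$, and I would verify $c_{s}=(-1)^{s}\binom{k}{s}$ using the subset-of-subset identity $\binom{k}{s'}\binom{k-s'}{s-s'}=\binom{k}{s}\binom{s}{s'}$ together with $\sum_{s'}(-1)^{s'}\binom{s}{s'}=0$; the constant parts obey $b_{s}=(q^{s}-1)\binom{n}{k-s}-\sum_{s'=0}^{s-1}\binom{k-s'}{k-s}b_{s'}$ and reassemble, after a binomial inversion, into $\binom{n}{k-s}\sum_{j=0}^{s-1}(-1)^{j}\binom{n-k+s}{j}(q^{s-j}-1)$. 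I expect this last bookkeeping, namely confirming that the proposed closed form satisfies every equation of the triangular system, to be the main obstacle: it is not deep but requires careful alternating binomial-coefficient manipulation. Once the $\mathcal{C}$-formula is established, applying the identical argument to the NMDS code $\mathcal{C}^{\perp}$ yields the dual formula, completing the proof.
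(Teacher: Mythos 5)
The paper itself gives no proof of this theorem: it is quoted verbatim from Dodunekov--Landjev \cite{Dodunekov1995}, so there is nothing internal to compare your argument against. Judged on its own terms, your route is the standard one (and essentially the one used in the cited source): the binomial moment identity you invoke is correct, and for $0\le\nu\le k-1$ the vanishing of $A_1,\dots,A_{n-k-1}$ and of $A_1^{\perp},\dots,A_{k-1}^{\perp}$ does collapse it to the lower-triangular system $\sum_{s'=0}^{s}\binom{k-s'}{k-s}A_{n-k+s'}=(q^{s}-1)\binom{n}{k-s}$, which determines all $A_{n-k+s}$ from the single parameter $A_{n-k}$; your computation of the coefficient $c_s=(-1)^s\binom{k}{s}$ via $\binom{k}{s'}\binom{k-s'}{s-s'}=\binom{k}{s}\binom{s}{s'}$ is complete and correct, and the dual formula does follow by applying the result to the NMDS code $\mathcal{C}^{\perp}$ with parameters $[n,n-k,k]$. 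The one piece you leave as a plan rather than a proof is the verification that the inhomogeneous part $b_s=\binom{n}{k-s}\sum_{j=0}^{s-1}(-1)^j\binom{n-k+s}{j}(q^{s-j}-1)$ satisfies the recursion; using $\binom{k-s'}{k-s}\binom{n}{k-s'}=\binom{n}{k-s}\binom{n-k+s}{s-s'}$ this reduces to the purely combinatorial identity $\sum_{s'=1}^{s}\binom{m+s}{s-s'}\sum_{j=0}^{s'-1}(-1)^j\binom{m+s'}{j}(q^{s'-j}-1)=q^s-1$ with $m=n-k$, which is routine (binomial inversion, as you say) but should be written out for a complete proof. No step of your outline is wrong; it is simply not finished at that last point.
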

\begin{corollary}\label{min weight }
	For an NMDS code $\mathcal{C}$ with parameters $[n,k,n-k]$, $\mathcal{C}$ and $\mathcal{C}^\perp$ have the same number of minimum-weight codewords, i.e, $A_{n-k}=A_{k}^{\perp}$
\end{corollary}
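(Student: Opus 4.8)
The plan is to prove $A_{n-k}=A_k^{\perp}$ \emph{not} from the two weight-distribution formulas of Theorem~\ref{weight distribution NMDS} taken in isolation: each of those involves its own free parameter ($A_{n-k}$ for $\mathcal{C}$ and $A_k^{\perp}$ for $\mathcal{C}^{\perp}$), so neither formula alone can link the two quantities. Instead I would invoke a single, well-chosen instance of the MacWilliams (binomial-moment) identities, which directly couples the weight distribution of $\mathcal{C}$ to that of $\mathcal{C}^{\perp}$.

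First I would record the two vanishing patterns forced by the NMDS hypothesis. Since $\mathcal{C}$ has parameters $[n,k,n-k]$, we have $A_0=1$ and $A_1=\cdots=A_{n-k-1}=0$; since $\mathcal{C}$ is NMDS, its dual has parameters $[n,n-k,k]$, so $A_0^{\perp}=1$ and $A_1^{\perp}=\cdots=A_{k-1}^{\perp}=0$. The crucial observation is that the lowest nontrivial nonzero weights of $\mathcal{C}$ and $\mathcal{C}^{\perp}$ sit at the complementary positions $n-k$ and $k$, which add up to $n$.

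Next I would use the MacWilliams identities in binomial-moment form,
$$\sum_{i=0}^{n-t}\binom{n-i}{t}A_i=q^{k-t}\sum_{i=0}^{t}\binom{n-i}{n-t}A_i^{\perp},\qquad 0\le t\le n,$$
and specialize to $t=k$, where the prefactor degenerates to $q^{k-t}=1$. On the left-hand side the vanishing of $A_1,\dots,A_{n-k-1}$ leaves only the terms $i=0$ and $i=n-k$, giving $\binom{n}{k}+\binom{k}{k}A_{n-k}$; on the right-hand side the vanishing of $A_1^{\perp},\dots,A_{k-1}^{\perp}$ leaves only $i=0$ and $i=k$, giving $\binom{n}{n-k}+\binom{n-k}{n-k}A_k^{\perp}$. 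Since $\binom{n}{k}=\binom{n}{n-k}$ and $\binom{k}{k}=\binom{n-k}{n-k}=1$, cancelling the common constant term yields $A_{n-k}=A_k^{\perp}$.

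The arithmetic is routine once the moment index is fixed, so the only step needing real care is the choice $t=k$: this is exactly the value that makes $q^{k-t}=1$ and, at the same time, collapses both sides to their trivial ($i=0$) term plus a single minimum-weight term, annihilating every intermediate-weight contribution. I expect this—recognizing that Theorem~\ref{weight distribution NMDS} must be supplemented by a genuine duality relation, and then tuning the moment index so that all but two terms drop out—to be the main conceptual point. Alternatively, one could derive the same identity from the known structural fact that the minimum-weight codewords of an NMDS code and of its dual correspond through complementary supports, but the moment identity gives the cleanest self-contained argument.
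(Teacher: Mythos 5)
Your proof is correct. The paper itself gives no argument for this corollary: it is stated as an immediate consequence of the quoted weight-distribution formulas of Theorem~\ref{weight distribution NMDS}, and the same fact is restated later as part of Theorem~\ref{thm:near-mds-correspondence} (the Faldum--Willems support correspondence). Your observation that the two formulas of Theorem~\ref{weight distribution NMDS} alone cannot link $A_{n-k}$ and $A_k^{\perp}$ --- since each carries its own free parameter --- is accurate, and your fix is exactly the standard one: the binomial-moment form of the MacWilliams identities at $t=k$, where $q^{k-t}=1$ and the vanishing of $A_1,\dots,A_{n-k-1}$ and $A_1^{\perp},\dots,A_{k-1}^{\perp}$ collapses each side to $\binom{n}{k}+A_{n-k}$ and $\binom{n}{n-k}+A_k^{\perp}$ respectively. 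This is in fact how the cited source (Dodunekov--Landjev) derives the weight distributions in the first place, so your argument supplies the duality input that the paper leaves implicit. The alternative you mention --- the bijection between minimum-weight codewords of $\mathcal{C}$ and $\mathcal{C}^{\perp}$ with complementary supports --- is precisely the paper's Theorem~\ref{thm:near-mds-correspondence}, so both routes you identify are the ones available in the text; your moment computation is the cleaner self-contained one.
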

Note that two NMDS codes with identical parameters $[n,k,n-k]$ over $\mathbb{F}_q$ may have different weight distributions, which represents a significant distinction between MDS and NMDS codes. The following theorem reveals a one-to-one correspondence between the minimum weight codewords in an NMDS code $\mathcal{C}$ and its dual code  $\mathcal{C}^\perp$.
\begin{theorem}[\cite{Faldum 1997}]\label{thm:near-mds-correspondence}
	For any near MDS code $\mathcal{C}$, there exists a natural correspondence between its minimum weight codewords and those of its dual $\mathcal{C}^\perp$, satisfying the following properties:
	\begin{itemize}
		\item For each minimum weight codeword $\mathbf{c} \in \mathcal{C}$, there exists a unique (up to a multiple),  minimum weight codeword $\mathbf{c}^\perp \in \mathcal{C}^\perp$ such that their supports are disjoint, i.e.,
		$$
		\mathrm{supp}(\mathbf{c}) \cap \mathrm{supp}(\mathbf{c}^\perp) = \emptyset.
		$$
		
		\item The number of minimum weight codewords in $\mathcal{C}$ equals that in $\mathcal{C}^\perp$.
	\end{itemize}
\end{theorem}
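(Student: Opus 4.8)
The plan is to route the correspondence through the \emph{singular $k$-subsets} of columns of a generator matrix, exploiting the fact that a single $k\times k$ singular submatrix carries a one-dimensional left kernel (which produces a minimum-weight codeword of $\mathcal{C}$) and a one-dimensional right kernel (which produces a minimum-weight codeword of $\mathcal{C}^\perp$). First I would fix a generator matrix $G$ of $\mathcal{C}$, of size $k\times n$ with columns $g_1,\ldots,g_n$, and invoke the generator-matrix characterization of NMDS codes stated above: every $k-1$ columns are linearly independent, some $k$ columns are linearly dependent, and every $k+1$ columns have rank $k$. A codeword $\mathbf{c}=\mathbf{x}G$ has weight $n-k$ exactly when its zero-coordinate set $Z=\{i:c_i=0\}$ has size $k$. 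Writing $G_Z$ for the $k\times k$ submatrix on the columns indexed by $Z$, the condition $\mathbf{x}G_Z=0$ forces $G_Z$ to be singular, so minimum-weight codewords of $\mathcal{C}$ are governed by singular $k$-subsets $Z$.

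Next I would pin down the multiplicities. Since any $k-1$ columns of $G$ are independent, $\rank G_Z\geq k-1$, while singularity gives $\rank G_Z\leq k-1$; hence $\rank G_Z=k-1$ and both kernels of $G_Z$ are one-dimensional. The left kernel yields a vector $\mathbf{x}$, unique up to a multiple, so $\mathbf{c}=\mathbf{x}G$ is the unique (up to a multiple) minimum-weight codeword of $\mathcal{C}$ with $\operatorname{supp}(\mathbf{c})=\{1,\ldots,n\}\setminus Z$. For the dual side, take a nonzero vector in the right kernel of $G_Z$ and extend it by zeros outside $Z$ to a vector $\mathbf{c}^\perp\in\mathbb{F}_q^n$; then $G(\mathbf{c}^\perp)^{\top}=\sum_{i\in Z}c^\perp_i\,g_i=0$, so $\mathbf{c}^\perp\in\mathcal{C}^\perp$ with $\operatorname{supp}(\mathbf{c}^\perp)\subseteq Z$. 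Because $\mathcal{C}^\perp$ has minimum distance $k=|Z|$, the weight of this nonzero codeword is at least $k$ and at most $|Z|=k$, so $\operatorname{supp}(\mathbf{c}^\perp)=Z$. Thus the same $Z$ produces a minimum-weight codeword $\mathbf{c}^\perp$ of $\mathcal{C}^\perp$, unique up to a multiple, whose support is exactly $Z$ and therefore disjoint from $\operatorname{supp}(\mathbf{c})$.

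To finish I would check that $\mathbf{c}\mapsto\mathbf{c}^\perp$ is a bijection between the minimum-weight codewords of the two codes, taken up to multiples. The forward map sends $\mathbf{c}$ to $\mathbf{c}^\perp$ through its zero-set $Z$; for the reverse direction, a weight-$k$ codeword $\mathbf{c}^\perp\in\mathcal{C}^\perp$ has a support $S$ of size $k$, and the relation $\sum_{i\in S}c^\perp_i\,g_i=0$ is a nontrivial dependence among the columns of $G$ indexed by $S$, so $S$ is again a singular $k$-subset feeding the same correspondence. Counting both sides through the common index set of singular $k$-subsets yields $A_{n-k}=A_k^\perp$, the asserted equality of the numbers of minimum-weight codewords. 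Uniqueness of the disjoint-support partner is automatic: any minimum-weight $\mathbf{c}^\perp$ disjoint from $\operatorname{supp}(\mathbf{c})$ must be supported on $Z$, hence lies in the one-dimensional right kernel of $G_Z$.

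The hard part will be the bookkeeping that fixes $\rank G_Z=k-1$ and thereby forces \emph{both} kernels of the single matrix $G_Z$ to be one-dimensional: this is the crux that makes the pairing simultaneously exist and be unique up to a multiple. Once the rank is nailed, the disjointness of supports and the equal-count statement follow formally; the one point to stay careful about is that the left and right kernels of the \emph{same} $G_Z$ are precisely the data defining the two paired codewords, rather than merely coexisting abstractly.
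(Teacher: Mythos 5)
Your proposal is correct. The paper itself gives no proof of this statement --- it is quoted directly from Faldum--Willems \cite{Faldum 1997} --- so there is no in-paper argument to compare against; your route is the standard one and it works. The key observations are all in place: a minimum-weight codeword of $\mathcal{C}$ has a zero set $Z$ of size exactly $k$, the generator-matrix characterization of NMDS codes forces $\operatorname{rank} G_Z = k-1$, and the one-dimensional left and right kernels of the single matrix $G_Z$ produce the paired codewords $\mathbf{c}$ and $\mathbf{c}^\perp$ with complementary supports; since codewords of a fixed support class are forced to be scalar multiples (by the one-dimensionality of the relevant kernel), both $A_{n-k}/(q-1)$ and $A_k^\perp/(q-1)$ equal the number of singular $k$-subsets of columns, giving the equal count. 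The only steps left implicit --- that $\mathbf{c}=\mathbf{x}G$ is nonzero because $G$ has full row rank, and that its zero set cannot strictly contain $Z$ because the minimum distance is $n-k$ --- are routine and do not affect the validity of the argument.
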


\begin{corollary}\label{NMDS complementary design}
	Let $\mathcal{C}$ be a near MDS code. If the minimum weight codewords of $\mathcal{C}$ form the blocks of a $t$-design, the minimum weight codewords of $\mathcal{C}^\perp$ form the complementary design. 
\end{corollary}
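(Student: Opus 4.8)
The plan is to exhibit the block set carried by the minimum weight codewords of $\mathcal{C}^\perp$ as the family of set-complements of the blocks carried by $\mathcal{C}$, using the disjoint-support correspondence of Theorem \ref{thm:near-mds-correspondence}. First I would record the parameters: since $\mathcal{C}$ is NMDS with parameters $[n,k,n-k]$, its dual $\mathcal{C}^\perp$ has parameters $[n,n-k,k]$, so the minimum weight codewords of $\mathcal{C}$ have weight $n-k$ while those of $\mathcal{C}^\perp$ have weight $k$. By hypothesis the supports (modulo scalar multiples) of the weight-$(n-k)$ codewords form the blocks of a $t$-$(n,n-k,\lambda)$ design $(\mathcal{P},\mathcal{B})$, where $\mathcal{P}=\{1,\ldots,n\}$.

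The central observation is that the disjoint-support correspondence is in fact a complementation. By Theorem \ref{thm:near-mds-correspondence}, every minimum weight codeword $\mathbf{c}\in\mathcal{C}$ is matched with a unique (up to a scalar) minimum weight codeword $\mathbf{c}^\perp\in\mathcal{C}^\perp$ whose support is disjoint from $\mathrm{supp}(\mathbf{c})$. Since $|\mathrm{supp}(\mathbf{c})|+|\mathrm{supp}(\mathbf{c}^\perp)|=(n-k)+k=n=|\mathcal{P}|$ and the two supports are disjoint subsets of $\mathcal{P}$, they must partition $\mathcal{P}$; hence $\mathrm{supp}(\mathbf{c}^\perp)=\mathcal{P}\setminus\mathrm{supp}(\mathbf{c})$. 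Thus the block of $\mathcal{C}^\perp$ associated to $\mathbf{c}^\perp$ is exactly the complement of the block of $\mathcal{C}$ associated to $\mathbf{c}$.

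Next I would check that this matching descends to a bijection of block multisets. The correspondence of Theorem \ref{thm:near-mds-correspondence} is one-to-one up to scalars, and by Corollary \ref{min weight } we have $A_{n-k}=A_k^\perp$, so after quotienting each weight class by the $q-1$ nonzero scalar multiples the two families of minimum weight codewords have equally many blocks and the matching becomes a bijection between $\mathcal{H}_{n-k}(\mathcal{C})$ and $\mathcal{H}_k(\mathcal{C}^\perp)$. Combining this with the previous step gives $\mathcal{H}_k(\mathcal{C}^\perp)=\{\mathcal{P}\setminus B:B\in\mathcal{H}_{n-k}(\mathcal{C})\}=\mathcal{B}^c$ as multisets. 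By the definition of the complementary design recalled in Section \ref{introduction}, $(\mathcal{P},\mathcal{B}^c)$ is a $t$-$(n,k,\lambda_c)$ design, which is exactly the assertion.

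The step I expect to demand the most care is the multiset bookkeeping in the third paragraph: one must ensure that distinct blocks of $\mathcal{C}$ cannot map to a common block of $\mathcal{C}^\perp$ and that no multiplicities are created or destroyed when passing from codewords to their supports. Here the uniqueness clause of Theorem \ref{thm:near-mds-correspondence} together with the equality $A_{n-k}=A_k^\perp$ does the real work; once the matching is confirmed to be a genuine bijection of blocks, the complementation identity $\mathrm{supp}(\mathbf{c}^\perp)=\mathcal{P}\setminus\mathrm{supp}(\mathbf{c})$ makes the conclusion immediate.
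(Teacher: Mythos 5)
Your argument is correct and is precisely the deduction the paper intends: the paper states this corollary without proof as an immediate consequence of Theorem \ref{thm:near-mds-correspondence}, and your observation that disjoint supports of sizes $(n-k)+k=n$ must be complementary, combined with the bijection of codeword classes and $A_{n-k}=A_k^\perp$, fills in exactly the expected details.
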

One of the most important problems with NMDS codes is the following: given $k$ and $q$, find the maximum value $m(q,k)$ for $n$, for which there exist an $q$-ary $[n,k,n-k]$ NMDS code.
\begin{proposition}\label{m(q,k) bound}
	$m(q,k)\le 2q+k$, and in case of equality $A_{n-k+1}=0$. 
\end{proposition}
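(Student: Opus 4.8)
The inequality is immediate. An NMDS code is in particular AMDS, so the first item of the AMDS theorem of \cite{Dodunekov1995} (valid for $k\ge 2$) gives $n\le k+2q$ for every $[n,k,n-k]$ NMDS code; hence $m(q,k)\le 2q+k$.

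For the equality statement the plan is to prove directly that any NMDS code $\mathcal{C}$ attaining $n=2q+k$ has no codeword of weight $n-k+1=2q+1$. There is an algebraic route: specialising the weight-distribution formula of Theorem \ref{weight distribution NMDS} to $s=1$ gives
$$A_{n-k+1}=(q-1)\binom{n}{k-1}-k\,A_{n-k},$$
so it would suffice to show $A_{n-k}=\tfrac{q-1}{k}\binom{n}{k-1}$ in the extremal case. However, pinning down the exact number of minimum-weight codewords from the parameters alone is delicate, so I would instead use a residual-code argument that avoids computing $A_{n-k}$.

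So suppose, for contradiction, that $\mathbf{c}\in\mathcal{C}$ has weight $w=2q+1$, and let $Z$ be its set of $k-1$ zero positions. First I would show that the restriction map $\pi:\mathcal{C}\to\mathbb{F}_q^{Z}$ is surjective. Its kernel is exactly the set of codewords supported inside $\mathrm{supp}(\mathbf{c})$; if this kernel had dimension $\ge 2$, then restricting two independent such codewords to the $2q+1$ coordinates of $\mathrm{supp}(\mathbf{c})$ would yield a $[2q+1,2,\ge 2q]$ code, which by the Singleton bound would be MDS, contradicting the nonexistence of $[N,2,N-1]$ MDS codes with $N>q+1$. Hence $\ker\pi$ is one-dimensional (spanned by $\mathbf{c}$), so $\dim\pi(\mathcal{C})=k-1$ and therefore $\pi(\mathcal{C})=\mathbb{F}_q^{k-1}$; in particular $\pi(\mathcal{C})$ contains a weight-one vector.

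Finally I would contradict this by the standard averaging estimate behind the residual-code bound. For any $\mathbf{x}\in\mathcal{C}$ with $\mathbf{x}|_Z\ne 0$, each $\mathbf{x}-\lambda\mathbf{c}$ satisfies $\mathrm{wt}(\mathbf{x}-\lambda\mathbf{c})\ge d=2q$, and since $\mathbf{c}$ is nonzero on all $2q+1$ positions of its support, summing over $\lambda\in\mathbb{F}_q$ gives $\sum_\lambda \mathrm{wt}\big((\mathbf{x}-\lambda\mathbf{c})|_{\mathrm{supp}(\mathbf{c})}\big)=(2q+1)(q-1)$; choosing $\lambda_0$ minimising this term yields $\mathrm{wt}\big((\mathbf{x}-\lambda_0\mathbf{c})|_{\mathrm{supp}(\mathbf{c})}\big)\le (2q+1)(q-1)/q=2q-1-1/q$. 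Because $(\mathbf{x}-\lambda_0\mathbf{c})|_Z=\mathbf{x}|_Z$, this forces $\mathrm{wt}(\mathbf{x}|_Z)\ge 2q-(2q-1-1/q)>1$, i.e. $\ge 2$. Thus every nonzero element of $\pi(\mathcal{C})$ has weight at least $2$, contradicting the weight-one vector found above, and so $A_{n-k+1}=0$. I expect the main obstacle to be the dimension step: one must guarantee $\dim\pi(\mathcal{C})=k-1$ rather than merely $\le k-1$, since otherwise the length-$(k-1)$ residual code need not be the full space and the contradiction disappears. The MDS-nonexistence argument above is precisely what closes this gap.
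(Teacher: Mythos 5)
Your proof is correct. Note that the paper itself gives no proof of this proposition: it is quoted as a known result of Dodunekov and Landjev (\cite{Dodunekov1995}), so there is no in-paper argument to compare against. Your reconstruction is a sound, self-contained version of the classical argument. The first half is indeed immediate from the AMDS length bound. For the equality case, both halves of your argument check out: (i) the kernel of the restriction map $\pi:\mathcal{C}\to\mathbb{F}_q^{Z}$ consists of codewords supported in $\mathrm{supp}(\mathbf{c})$, and a two-dimensional such kernel would restrict injectively to a $[2q+1,2,2q]$ code, which is MDS by Singleton and cannot exist since dimension-two MDS codes have length at most $q+1$; hence $\pi$ is surjective onto $\mathbb{F}_q^{k-1}$ and produces a coset representative with $\mathrm{wt}(\mathbf{x}|_Z)=1$; (ii) the averaging over $\mathbf{x}-\lambda\mathbf{c}$ gives some $\lambda_0$ with weight at most $\lfloor(2q+1)(q-1)/q\rfloor=2q-2$ on $\mathrm{supp}(\mathbf{c})$, forcing $\mathrm{wt}(\mathbf{x}|_Z)\ge 2$ for every $\mathbf{x}$ with $\mathbf{x}|_Z\neq 0$, a contradiction. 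The only implicit hypothesis is $k\ge 2$ (needed both for the cited length bound and for $Z$ to be nonempty), which is consistent with the paper's restriction to nontrivial AMDS codes and worth stating explicitly. You were also right to abandon the route via $A_{n-k+1}=(q-1)\binom{n}{k-1}-kA_{n-k}$, which would require knowing $A_{n-k}$ exactly and is circular here.
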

NMDS codes meeting the above bound are called \emph{extremal}. For example, the ternary $[12,6,6]$ extended Golay code is an extremal NMDS code. The following Propositions shows that NMDS codes with special weight distributions may produce $t$-designs. In particular, extremal NMDS codes with $k\ge 3$ support $2$-designs. 
\begin{proposition}\label{extremal design}
	Let $\mathcal{C}$ be an $[n,k,n-k]$ NMDS code. If there is an integer $s$ such that $A_{n-k+s}=0$.  Then $\left(\mathcal{P}(\mathcal{C}),\mathcal{H}_{w}(\mathcal{C})\right)$
	and $\left(\mathcal{P}\left(\mathcal{C}^{\perp}\right),\mathcal{H}_{w}\left(\mathcal{C}^{\perp}\right)\right)$ are both $t$-designs for all $w\in\{0,1,\ldots, n\}$. In particular, any $[2q+k,k,2q]$ extremal NMDS code and its dual code support $(k-1)$-designs.
\end{proposition}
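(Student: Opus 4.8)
The plan is to deduce both conclusions at one stroke from the general Assmus--Mattson theorem (Theorem \ref{AM general}), after translating the single vanishing weight $A_{n-k+s}=0$ into the exact count of weights that the theorem requires. First I would record the parameters forced by the NMDS hypothesis: since $\mathcal{C}$ is $[n,k,n-k]$, its dual is $[n,n-k,k]$, so $d=n-k$ and $d^{\perp}=k$; moreover every nonzero weight of $\mathcal{C}$ lies in $\{n-k,n-k+1,\ldots,n\}$, i.e.\ is of the form $n-k+j$ with $0\le j\le k$. The idea is then to invoke Theorem \ref{AM general} with the value $t=k-s$ (taking, when $k-s$ is too large, the largest admissible $t$ below $\min\{d,d^{\perp}\}$ instead).

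The heart of the argument is the weight count. With $t=k-s$ one has $n-t=n-k+s$, so the nonzero weights of $\mathcal{C}$ lying in $\{1,2,\ldots,n-t\}$ are exactly those $n-k+j$ with $0\le j\le s$, a set of size at most $s+1$. The hypothesis $A_{n-k+s}=0$ deletes the top value $j=s$, leaving at most $s$ weights in that range. Since $d^{\perp}-t=k-(k-s)=s$, the Assmus--Mattson bound ``at most $d^{\perp}-t$ weights in $\{1,\ldots,n-t\}$'' is met, with equality. Combined with $t=k-s<k=d^{\perp}$ (immediate from $s\ge1$) and $t=k-s<n-k=d$, which I would check directly and which is automatic in the extremal regime below, both hypotheses of Theorem \ref{AM general} hold, and the theorem yields that $(\mathcal{P}(\mathcal{C}),\mathcal{H}_{w}(\mathcal{C}))$ and $(\mathcal{P}(\mathcal{C}^{\perp}),\mathcal{H}_{w}(\mathcal{C}^{\perp}))$ are $t$-designs for every $w\in\{0,1,\ldots,n\}$. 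The same computation shows the bound persists for any $1\le t\le k-s$ with $t<\min\{d,d^{\perp}\}$, since lowering $t$ by one enlarges both the weight range and the threshold $d^{\perp}-t$ by the same amount.

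For the ``in particular'' claim I would specialize to an extremal code. An extremal NMDS code has $n=2q+k$, hence $d=n-k=2q$, and Proposition \ref{m(q,k) bound} asserts that equality in $m(q,k)\le 2q+k$ forces $A_{n-k+1}=A_{2q+1}=0$; thus $s=1$ and $t=k-1$. The AMDS dimension bound $k\le 2q$ then gives $t=k-1<2q=d$ and $t=k-1<k=d^{\perp}$, so the first part applies and shows that the extremal code and its dual both support $(k-1)$-designs. The only genuine content is the bookkeeping in the middle paragraph: the single zero weight $A_{n-k+s}$ is precisely what brings the number of in-range weights down from $s+1$ to the threshold $s=d^{\perp}-t$, and the choice $t=k-s$ is what aligns these two quantities. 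I expect no real obstacle beyond confirming the strict inequality $t<\min\{d,d^{\perp}\}$, which follows once $s\ge1$ and the NMDS (respectively extremal) size bounds are invoked.
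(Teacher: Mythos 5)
Your proof is correct and takes essentially the same route as the paper, which derives the proposition from the Assmus--Mattson theorem with $t=k-s$ together with Proposition \ref{m(q,k) bound} (giving $A_{n-k+1}=0$, hence $s=1$ and $t=k-1$ in the extremal case). Your weight count showing that $A_{n-k+s}=0$ reduces the number of in-range weights to exactly $d^{\perp}-t=s$, and your check that $t<\min\{d,d^{\perp}\}$ via $k\le 2q$, simply make explicit the details the paper leaves to the reader.
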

Proposition \ref{extremal design} can be obtained by Assmus-Mattson theorem \ref{AM} with $t=k-s$ and Proposition \ref{m(q,k) bound}. The following Proposition shows many NMDS codes can be derived from an NMDS code. 
\begin{proposition}\label{derived NMDS}
	The existence of a $q$-ary $[n,k,n-k]$ NMDS code implies
	\begin{itemize}
		\item The existence of a $q$-ary $[n-1,k-1, n-k ]$ NMDS code;
		\item The existence of a $q$-ary $[n-1,k,n-k-1]$ NMDS code. 
	\end{itemize}
\end{proposition}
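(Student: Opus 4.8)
The plan is to realize the two codes as a puncturing and a shortening of $\mathcal{C}$, and to verify the result with the generator-matrix characterization of NMDS codes (the second theorem attributed to \cite{Dodunekov1995} in this section) together with the fact, recorded above, that a code is NMDS if and only if its dual is. Throughout I would treat the nondegenerate case $\min\{k,n-k\}\ge 2$; otherwise the asserted codes are trivial.

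I would first prove the second item (puncturing). Fix a generator matrix $G$ of $\mathcal{C}$, a $k\times n$ matrix with columns $g_1,\dots,g_n$, and recall that the right kernel of $G$ is exactly $\mathcal{C}^\perp$, so that a linear dependence $\sum_i x_i g_i=0$ is the same thing as a dual codeword $x=(x_1,\dots,x_n)\in\mathcal{C}^\perp$ whose support is the set of columns involved. Since $\mathcal{C}^\perp$ is AMDS with $d^\perp=k$, any minimum-weight codeword $x$ of $\mathcal{C}^\perp$ has support $S$ of size exactly $k<n$; I would then choose a puncturing coordinate $j\notin S$ and let $G'$ be $G$ with its $j$-th column deleted, a generator matrix of the punctured code $\mathcal{C}^{*j}$ of length $n-1$. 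The three conditions of the characterization are read off directly for $G'$: any $k-1$ columns of $G'$ are $k-1$ columns of $G$, hence independent; any $k+1$ columns of $G'$ are $k+1$ columns of $G$, hence of rank $k$ (this also forces $\mathrm{rank}(G')=k$, since the $n-1\ge k+1$ remaining columns cannot all lie in a $(k-1)$-dimensional space without some $(k+1)$-subset violating the rank-$k$ condition inherited from $G$); and the columns indexed by $S$ survive in $G'$ and are linearly dependent, supplying the required dependent $k$-set. Hence $\mathcal{C}^{*j}$ is NMDS of length $n-1$ and dimension $k$, i.e.\ has parameters $[n-1,k,n-k-1]$.

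For the first item (shortening) I would invoke duality rather than repeat the argument. Applying the puncturing result just proved to $\mathcal{C}^\perp$, which is an $[n,n-k,k]$ NMDS code, yields an $[n-1,n-k,k-1]$ NMDS code. Its dual has length $n-1$, dimension $(n-1)-(n-k)=k-1$, and---being the dual of an NMDS code---is itself NMDS, so its minimum distance equals $(n-1)-(k-1)=n-k$; that is, the dual has parameters $[n-1,k-1,n-k]$, as required. (Equivalently one may shorten $\mathcal{C}$ directly at a coordinate lying in the support of a minimum-weight dual codeword and check the three column conditions, but the dual route is shorter.)

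The only delicate point is the bookkeeping in the puncturing step: deleting a column must neither drop the dimension below $k$ nor destroy all dependent $k$-sets. Both are handled by the single choice $j\notin S$ together with $n\ge k+2$: the former keeps the dependent $k$-set $S$ intact, and the $(k+1)$-subset rank condition guarantees the surviving $n-1\ge k+1$ columns still span $\mathbb{F}_q^k$. No genuine estimate is involved---the work lies entirely in choosing $j$ correctly and matching the three characterizing conditions for $G'$.
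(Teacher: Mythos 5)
The paper states this proposition without any proof (it is a standard fact from the near-MDS literature going back to Dodunekov and Landjev), so there is no in-paper argument to compare against. Your argument is correct and complete: puncturing at a coordinate $j$ chosen outside the support $S$ of a minimum-weight dual codeword, verifying the three column conditions for $G'$ (including that $\mathrm{rank}(G')=k$ when $n\ge k+2$), and then obtaining the shortened $[n-1,k-1,n-k]$ code by applying the same result to $\mathcal{C}^\perp$ and dualizing, is exactly the standard route, and your bookkeeping of the one delicate point --- that the deleted column must avoid some dependent $k$-set so that the punctured code stays near-MDS rather than becoming MDS --- is handled correctly.
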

The following corollary shows an extremal NMDS codes can derive extremal NMDS codes with lower dimensions.
\begin{corollary}\label{derived extremal}
	For any integer $t$ with $0\le t\le k$, we have $m(q,k)\le m(q,k-t)+t$. In particular, the existence of a $q$-ary extremal NMDS code of dimension $k$ implies the existence of $q$-ary extremal NMDS codes for any dimension $k'\le k$.
\end{corollary}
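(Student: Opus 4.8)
The plan is to derive the inequality from the first shortening operation in Proposition \ref{derived NMDS} by iterating it, and then obtain the statement about extremal codes as a consequence of the resulting inequality together with the universal upper bound in Proposition \ref{m(q,k) bound}.

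First I would record the effect of a single shortening step on the parameters. The first item of Proposition \ref{derived NMDS} turns an $[n,k,n-k]$ NMDS code into an $[n-1,k-1,n-k]$ NMDS code; since $(n-1)-(k-1)=n-k$, the minimum distance is unchanged while the length and dimension each drop by one, so the output is again a code in standard NMDS form, now of dimension $k-1$. Applying this step $j$ times therefore produces an $[n-j,\,k-j,\,n-k]$ NMDS code for every $0\le j\le k$, a fact I would establish by a one-line induction on $j$.

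For the inequality $m(q,k)\le m(q,k-t)+t$, I would start from a length-maximal code: by definition of $m(q,k)$ there is an $[n,k,n-k]$ NMDS code with $n=m(q,k)$. Shortening it $t$ times yields an NMDS code of dimension $k-t$ and length $n-t$. Since $m(q,k-t)$ is, by definition, the largest length of any dimension-$(k-t)$ NMDS code over $\mathbb{F}_q$, we must have $n-t\le m(q,k-t)$, which rearranges to $m(q,k)=n\le m(q,k-t)+t$.

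For the ``in particular'' claim I would specialize this. Suppose a $q$-ary extremal NMDS code of dimension $k$ exists, so that $m(q,k)=2q+k$. Fix any $k'\le k$ and set $t=k-k'$. The inequality just proved gives $2q+k=m(q,k)\le m(q,k')+(k-k')$, hence $2q+k'\le m(q,k')$; combined with the reverse inequality $m(q,k')\le 2q+k'$ from Proposition \ref{m(q,k) bound}, this forces $m(q,k')=2q+k'$, i.e.\ a dimension-$k'$ extremal NMDS code exists. (Alternatively, one sees this directly: shortening the $[2q+k,k,2q]$ code $k-k'$ times lands on the parameters $[2q+k',k',2q]$, which are exactly extremal for dimension $k'$.) The argument is elementary once Proposition \ref{derived NMDS} is in hand; the only point requiring a moment's care is checking that each shortening preserves the standard NMDS parameter relation $d=n-k$, which is immediate from the parameter bookkeeping above, so I do not anticipate any genuine obstacle.
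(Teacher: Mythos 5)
Your proof is correct and follows exactly the route the paper intends: the corollary is stated without proof immediately after Proposition \ref{derived NMDS} and Proposition \ref{m(q,k) bound}, and the evident derivation is precisely your iteration of the shortening step $[n,k,n-k]\mapsto[n-1,k-1,n-k]$ followed by the comparison with the bound $m(q,k')\le 2q+k'$. The only (harmless) edge case you leave implicit is $t=k$, where the shortened code has dimension $0$ and the inequality is degenerate.
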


\subsection{Subset sums in finite abelian groups and derived $t$-designs}
We start by introducing the well-known \emph{subset-sum problem}, and then the $t$-designs from subset sums. Consider an finite abelian group $G$ written additively, and let $D \subseteq G$ be a finite subset with cardinality $n$. For any element $x \in G$, we define the counting function
$$
N(D,k,x) =\# \left\{T \subseteq D : \# T= k, \sum_{t \in T} t = x\right\},
$$
which enumerates all $k$-subsets of $D$ whose elements sum to $x$.

The \emph{subset-sum problem}, which asks whether $N(D,k,x) > 0$ for some $1 \leq k \leq n$, is an NP-complete problem with important applications in coding theory and cryptography \cite{P2023}. The problem's main difficulty stems from the arbitrary choice of subset $D$. However, significant progress has been made in the case $D = G$. In this paper we are only interested in the case $D=G$, and we list the following definitions and notations we will use.
\begin{itemize}
	\item $G^{*}:=G\setminus \left \{ 0 \right \} $.
	\item $\mathcal{B}_{k}^{x}:=\left\{T \subseteq G: \# T= k, \sum_{t \in T} t = x\right\},\quad b_{k}^{x}:=\#\mathcal{B}_{k}^{x}.$ \item $\mathcal{B}_{k}^{x,*}:=\left\{T \subseteq G^{*} : \# T= k, \sum_{t \in T} t = x\right\},\quad b_{k}^{x,*}:=\#\mathcal{B}_{k}^{x,*}.$
	\item $\exp(G)$ is the exponent of a group $G$, and for $x\in G$ define 
	$e(x):=\max\{d: d|\exp(G), x\in dG\}.$
	\item For any positive integer $d$, denote the $d$-torsion of $G$ by $G[d]:=\{g\in G:dg=0\}.$
	\item $\mu(\cdot )$ is the M\"{o}bius function. For a positive integer $n$ with prime factorization $n = \prod_{i=1}^{k} p_i^{e_i}$, the M\"obius function is defined as: $$
	\mu(n) = 
	\begin{cases}
		1 & \text{if } n = 1, \\
		(-1)^k & \text{if } e_i = 1 \text{ for all } i, \\
		0 & \text{if } e_i > 1 \text{ for any } i.
	\end{cases}
	$$
	\item For a prime $p$, and any integer $N$, we denote by $\nu_{p}(N)$ the $p$-adic valuation of $N$, defined as $$\nu_{p}(N)=\begin{cases}
		\max\{s\in\mathbb{N}^{+}:p^s\mid N\} & \text{ if } N\ne0, \\
		\infty  & \text{ if } N=0.
	\end{cases}$$
\end{itemize}For the additive group of a finite field $\mathbb{F}_{q}$ (an elementary abelian $p$-group), Li and Wan \cite{LW2008} obtained a formula for $b_{k}^{x}$. With a different character-theoretic approach, Kosters \cite{K2013} extended the formula of $b_{k}^{x}$ to arbitrary finite abelian groups, and also developed a similar formula for $b_{k}^{x,*}$. We present these formulas in the following theorem. 
\begin{theorem}\label{thm bb}
	Let $G$ be an abelian group of order $n$. For any $x\in G$ and
	$1\le k\le n$, we have
	\begin{align}
		b_k^x
		={}& \frac{1}{n}
		\sum_{s\mid \gcd(\exp(G),k)}
		(-1)^{k+k/s}
		\binom{n/s}{k/s} \notag\\
		& \sum_{d\mid \gcd(e(x),s)}
		\mu\!\left(\frac{s}{d}\right)\#G[d],
	\end{align}
	and for any $1\le k\le n-1$, we have
	\begin{align}
		b_k^{x,*}
		={}& \frac{1}{n}
		\sum_{s\mid \exp(G)}
		(-1)^{k+\lfloor k/s\rfloor}
		\binom{n/s-1}{\lfloor k/s\rfloor} \notag\\
		& \sum_{d\mid \gcd(e(x),s)}
		\mu\!\left(\frac{s}{d}\right)\#G[d].
	\end{align}
\end{theorem}
Moreover, it was found that in any elementary abelian $p$-group ($p$ odd), $b_{k}^{x}$ and $b_{k}^{x,*}$ are nonzero except some trivial cases, as shown in the following theorem \cite{K2013,FP2021Permutations }.
\begin{theorem}\label{thm b}
	Let $p$ be odd, and let $G=\mathbb{Z}_{p}^{m}=\underbrace{\mathbb{Z}_{p}\oplus\cdots\oplus\mathbb{Z}_{p}}_{\text{$m$ times}}$. For $1\leq k\leq\left|G\right|=p^m$ and $x\in G$, the following assertions are true:
	\begin{itemize}
		\item  For all $x$ in $G$, $b_{k}^{x}>0$, the only exception is the trivial case $k=|G|=p^{m}$ for $x\neq 0$.
		\item For all $x$ in $G$, $b_{k}^{x,*}>0$, the only exceptions are the trivial cases $k=1$ and $k=p^{m}-2$ for $x=0$, and $k=p^{m}-1$ for $x\neq 0$.
	\end{itemize}
\end{theorem}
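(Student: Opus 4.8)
The plan is to specialize the two closed-form expressions of Theorem~\ref{thm bb} to $G=\mathbb{Z}_p^m$ and then reduce each positivity statement to a short list of binomial-coefficient inequalities. First I would record the invariants of $G=\mathbb{Z}_p^m$: one has $n=p^m$, $\exp(G)=p$, and $dG=\{0\}$ when $d=p$ while $dG=G$ when $d=1$, so that $e(0)=p$ and $e(x)=1$ for $x\neq0$; moreover $\#G[1]=1$ and $\#G[p]=n$. Substituting these into the formula for $b_k^x$, and using that $p$ is odd (so every surviving sign $(-1)^{k+k/s}$ equals $1$), gives
\begin{equation*}
b_k^x=\tfrac1n\binom{n}{k}\ \ (p\nmid k),\qquad b_k^{0}=\tfrac1n\Big[\binom{n}{k}+(n-1)\binom{n/p}{k/p}\Big],\qquad b_k^{x}=\tfrac1n\Big[\binom{n}{k}-\binom{n/p}{k/p}\Big]\ \ (x\neq0),
\end{equation*}
the latter two holding when $p\mid k$. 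The first assertion of the theorem is then immediate: $b_k^x$ is a sum of nonnegative terms, strictly positive unless the difference $\binom{n}{k}-\binom{n/p}{k/p}$ vanishes, which for $1\le k\le n$ occurs only at $k=n$, $x\neq0$ — exactly the stated exception, reflecting that the one $n$-subset of $G$ is all of $G$, with sum $0$.

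For the starred count I would substitute the same data into the second formula; since the sum now runs over all $s\mid\exp(G)=p$, one obtains, writing $\varepsilon_k:=(-1)^{k+\lfloor k/p\rfloor}=(-1)^{k\bmod p}$ (using that $p+1$ is even),
\begin{equation*}
b_k^{0,*}=\tfrac1n\Big[\binom{n-1}{k}+\varepsilon_k(n-1)\binom{n/p-1}{\lfloor k/p\rfloor}\Big],\qquad b_k^{x,*}=\tfrac1n\Big[\binom{n-1}{k}-\varepsilon_k\binom{n/p-1}{\lfloor k/p\rfloor}\Big]\ \ (x\neq0).
\end{equation*}
When $\varepsilon_k=-1$ the $x\neq0$ expression is a sum of positive terms, so only $\varepsilon_k=+1$ needs attention there. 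For that I would prove $\binom{n-1}{k}\ge\binom{n/p-1}{\lfloor k/p\rfloor}$ by a direct injection: partition an $(n-1)$-set into $n/p-1$ full blocks of size $p$ together with one short block, and send a $\lfloor k/p\rfloor$-subset of block indices to the union of the chosen blocks, padded by a fixed prefix of the short block. This map is injective and is onto only when $k=n-1$, giving strict inequality for $k\le n-2$ and equality at $k=n-1$. Hence $b_k^{x,*}>0$ for $1\le k\le n-2$ and $b_{n-1}^{x,*}=0$, matching the exception (the only $(n-1)$-subset of $G^{*}$ is $G^{*}$ itself, whose sum is $0\neq x$).

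The genuinely delicate case is $x=0$ with $\varepsilon_k=-1$, where positivity requires the \emph{stronger} inequality $\binom{n-1}{k}>(n-1)\binom{n/p-1}{\lfloor k/p\rfloor}$. Here I would first use the complementation symmetry $b_k^{0,*}=b_{n-1-k}^{0,*}$ (complementing within $G^{*}$, whose total sum is $0$), which preserves the residue parity of $k$ and hence $\varepsilon_k$, to restrict to $1\le k\le(n-1)/2$. On this range $\binom{n-1}{k}$ is nondecreasing, so I can bound it below by $\binom{n-1}{p\lfloor k/p\rfloor}$ when $\lfloor k/p\rfloor\ge1$ and by $\binom{n-1}{3}$ when $\lfloor k/p\rfloor=0$ (forcing $k\ge3$), and then compare with $(n-1)\binom{n/p-1}{\lfloor k/p\rfloor}$ by elementary estimates. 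The equality cases occur precisely at $k=1$ and, via the symmetry, at $k=n-2$, yielding exactly the two exceptions for $x=0$; combinatorially these record that no single nonzero element, and no choice of $n-2$ nonzero elements, can sum to $0$ in $G^{*}$.

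I expect the factor-$(n-1)$ estimate of the last paragraph to be the main obstacle. Unlike the $x\neq0$ case it does not follow from a one-line injection, and the real difficulty is to verify strictness throughout $2\le k\le(n-1)/2$ so as to isolate $k=1$ (equivalently $k=n-2$) as the \emph{only} vanishing points. The interior of the range is in fact very loose — for $m\ge2$ the bound on $\binom{n-1}{p\lfloor k/p\rfloor}$ dominates $(n-1)\binom{n/p-1}{\lfloor k/p\rfloor}$ with room to spare — so the care is concentrated near the boundary (the smallest admissible $k$) and in the degenerate small cases $n=p^m\in\{3,5\}$, where the range $k\le(n-1)/2$ contains no interior $\varepsilon_k=-1$ value and the two boundary exceptions coincide with $k=1$ and $k=n-2$ directly.
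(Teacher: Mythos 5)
The paper does not actually prove this statement; it quotes it from Kosters and Falcone--Pavone, so there is no in-paper argument to compare against and your attempt has to stand on its own. Your specializations of Theorem~\ref{thm bb} to $G=\mathbb{Z}_p^m$ are all correct: $e(0)=p$, $e(x)=1$ for $x\neq 0$, $\#G[p]=n$, the sign $(-1)^{k+k/p}=+1$ when $p\mid k$ (since $p+1$ is even), and $\varepsilon_k=(-1)^{k\bmod p}$; the resulting closed forms for $b_k^x$, $b_k^{0,*}$ and $b_k^{x,*}$ check out, and they immediately dispose of the first bullet and of the $x\neq 0$ part of the second bullet (your block-partition injection for $\binom{n-1}{k}\ge\binom{n/p-1}{\lfloor k/p\rfloor}$, with strictness for $k\le n-2$, is sound). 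The identification of the exceptional cases $k=n$ ($x\neq0$), $k=n-1$ ($x\neq0$), and $k=1,\,n-2$ ($x=0$) as exactly the equality cases is also correct.

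The genuine gap is the one you flag yourself: the case $x=0$, $\varepsilon_k=-1$, where positivity requires the strict inequality $\binom{n-1}{k}>(n-1)\binom{n/p-1}{\lfloor k/p\rfloor}$ for all $2\le k\le n-3$ with $k\bmod p$ odd. You reduce via the (valid) complementation symmetry to $k\le(n-1)/2$ and then say the comparison follows ``by elementary estimates,'' but no estimate is actually supplied, and the obvious crude bounds do not close it uniformly: for $j=\lfloor k/p\rfloor\ge 1$ one must show $\binom{n-1}{pj}>(n-1)\binom{n/p-1}{j}$ for all $1\le j\le\lfloor(n-1)/(2p)\rfloor$, and the right-hand side grows in $j$ (up to $\binom{n/p-1}{\lfloor(n/p-1)/2\rfloor}$, which is exponential in $n/p$), so a bound of the form ``LHS $\ge\binom{n-1}{p}$'' is not enough; one genuinely needs a $j$-dependent comparison such as $\binom{n-1}{pj}\ge\binom{n/p-1}{j}\cdot(n/p)^{p-1}$ obtained from a Vandermonde-type block decomposition, together with $(n/p)^{p-1}>n-1$, and the latter is tight precisely in the smallest case $m=2$, $p=3$ and so needs separate verification. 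Until that inequality (and the boundary cases $j=0$, $k\ge 3$, and $n\in\{3,5\}$) is written out with the strictness tracked, the claim that $k=1$ and $k=n-2$ are the \emph{only} vanishing points for $x=0$ is asserted rather than proved. The cited sources avoid this entirely by evaluating the relevant character sums exactly; your route is completable but the decisive step is missing.
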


The question of whether $(G, \mathcal{B}_{k}^{x})$ forms a $t$-design has strong connections to coding theory \cite{D1973}, particularly for Hamming codes when $G = \mathbb{Z}_{2}^{m}$ \cite{FP2021}. In \cite{FP2021} and \cite{P2023} the authors established necessary and sufficient conditions for $(\mathbb{Z}_{p}^{m}, \mathcal{B}_{k}^{x})$ to be $1$-designs and $2$-designs when $p$ is a prime. The following theorem \cite{P2023} presents necessary and sufficient conditions for $(\mathbb{Z}_{p}^{m}, \mathcal{B}_{k}^{x})$ to be $1$-designs and $2$-designs when $p$ is an odd prime.
\begin{theorem}\label{thm element p design}
	
	Let $G=\mathbb{Z}_{p}^{m}=\underbrace{\mathbb{Z}_{p}\oplus\cdots\oplus\mathbb{Z}_{p}}_{\text{$m$ times}}$, and let $x\in G$, $1\le k\le p^{m}-1$. Then
	$(G,\mathcal{B} _{k}^{x})$ is a $1$-design if and only if $p\mid k$. Furthermore, $(G,\mathcal{B} _{k}^{x} )$ is a $2$-design if and only if $p\mid k$ and $x=0$.
	
\end{theorem}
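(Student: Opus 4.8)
The plan is to translate the two design conditions into statements about the constancy of \emph{replication numbers}, and then to evaluate those numbers: I use the group structure of $G=\mathbb{Z}_p^m$ (translations and linear maps) for the sufficiency directions, and explicit subset-sum counts for the necessity directions. Writing $n=p^m$ and viewing $\mathcal{B}_k^x$ as a block set on the point set $G$, the number of blocks through a fixed point $g$ is
$$r(g)=\#\{T\subseteq G\setminus\{g\}:\#T=k-1,\ \textstyle\sum_{t\in T}t=x-g\}=b_{k-1}^{\,x-kg,\,*},$$
the last equality coming from the translation $t\mapsto t-g$ carrying $G\setminus\{g\}$ onto $G^{*}$; and for distinct $g,h$ the number of blocks through $\{g,h\}$ is $r_2(g,h)=\#\{T\subseteq G\setminus\{g,h\}:\#T=k-2,\ \sum_{t\in T}t=x-g-h\}$. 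By definition $(G,\mathcal{B}_k^x)$ is a $1$-design iff $r(g)$ is independent of $g$, and (being then automatically a $1$-design) a $2$-design iff moreover $r_2(g,h)$ is independent of the pair.

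For the $1$-design equivalence, if $p\mid k$ then $kg=0$ for all $g$, so the translations $\tau_a\colon g\mapsto g+a$ send $\mathcal{B}_k^x$ to $\mathcal{B}_k^{x+ka}=\mathcal{B}_k^x$; thus $G$ acts as a regular automorphism group on a nonempty structure (nonempty by Theorem \ref{thm b}), forcing a $1$-design. Conversely, if $p\nmid k$ then $g\mapsto kg$ is a bijection of $G$, so $x-kg$ ranges over all of $G$ and $r$ is constant iff $b_{k-1}^{y,*}$ is independent of $y$. By Theorem \ref{thm bb}, $b_{k-1}^{y,*}$ depends on $y$ only through $e(y)$, which is $p$ for $y=0$ and $1$ for $y\ne0$; carrying out both evaluations (only the term $s=p$ survives in the difference) gives
$$b_{k-1}^{0,*}-b_{k-1}^{y,*}=\pm\binom{p^{m-1}-1}{\lfloor (k-1)/p\rfloor}\ne0,$$
the binomial being nonzero since $\lfloor(k-1)/p\rfloor\le p^{m-1}-1$. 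Hence $r$ is non-constant (the case $k=1$ being immediate), settling the first assertion.

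For the $2$-design equivalence, sufficiency is again a symmetry argument: when $p\mid k$ and $x=0$, both the translations and the linear maps in $\mathrm{GL}_m(\mathbb{F}_p)$ (which fix $\mathcal{B}_k^{0}$) are automorphisms, so the $2$-transitive group $\mathrm{AGL}_m(\mathbb{F}_p)=G\rtimes\mathrm{GL}_m(\mathbb{F}_p)$ acts on the nonempty block set and yields a $2$-design. Since every $2$-design is a $1$-design, the first part forces $p\mid k$, and it remains to prove that $x\ne0$ precludes the $2$-design property. This is the crux of the argument, and the main obstacle, because $r_2(g,h)$ is a \emph{doubly-pointed} subset-sum count not covered by Theorem \ref{thm bb}.

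I would overcome this by Fourier analysis on $G$. Writing $\zeta=e^{2\pi i/p}$ and using that $\prod_{u\in G}(1+z\psi(u))=(1+z^p)^{n/p}$ for every nontrivial character $\psi$ of $\mathbb{Z}_p^m$, one gets
$$r_2(g,h)=\frac1n\sum_{\psi}\psi(g+h-x)\,[z^{k-2}]\frac{\prod_{u\in G}(1+z\psi(u))}{(1+z\psi(g))(1+z\psi(h))}.$$
Invoking $p\mid k$ (translation invariance) reduces matters to $\rho(\delta):=r_2(0,\delta)$ for $\delta\ne0$, and the residue identity $\sum_{c=0}^{p-1}\zeta^{-j_0c}/(1+z\zeta^c)=p(-z)^{j_0}/(1+z^p)$ shows that $\rho(\delta)$ takes one value when $x\notin\langle\delta\rangle$ and another when $x=j_0\delta\in\langle\delta\rangle$, the difference being $\pm\sum_{i=0}^{\lfloor(k-2-j_0)/p\rfloor}(-1)^i\binom{p^{m-1}-1}{i}$. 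Comparing $j_0=1$ and $j_0=p-1$ (both available since $p\ge3$), their difference is $\pm\binom{p^{m-1}-1}{k/p-1}\ne0$, so these two quantities cannot both vanish; since for $m\ge2$ both parallel and non-parallel directions $\delta$ occur (and for $m=1$ no admissible $k$ is divisible by $p$), $\rho$ is non-constant whenever $x\ne0$, and the structure fails to be a $2$-design. This completes the equivalence.
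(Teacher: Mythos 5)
The paper does not actually prove Theorem \ref{thm element p design}: it is imported from Pavone \cite{P2023} and used as a black box in the proof of Theorem \ref{1,2}, so there is no internal proof to compare yours against. Judged on its own, your argument is essentially correct and self-contained using only tools the paper does state. The identity $r(g)=b_{k-1}^{x-kg,*}$ is right; the translation and $\mathrm{AGL}_m(\mathbb{F}_p)$ symmetry arguments settle both sufficiency directions; and the necessity of $p\mid k$ follows correctly from Theorem \ref{thm bb}, since the $s=1$ term cancels in $b_{k-1}^{0,*}-b_{k-1}^{y,*}$ and the $s=p$ term leaves $\pm\binom{p^{m-1}-1}{\lfloor(k-1)/p\rfloor}$, nonzero because $k-1\le p^{m}-2$.

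The one step you should recheck is the indexing in the final Fourier computation. With the normalization $r_2(0,\delta)=\frac1n\sum_\psi\psi(\delta-x)\,[z^{k-2}](\cdots)$ and $x=j_0\delta$, the character factor is $\zeta^{(1-j_0)\lambda(\delta)}$, so the residue identity yields the exponent $(j_0-1)\bmod p$ rather than $j_0$; with that indexing every truncation point $\lfloor(k-2-(j_0-1))/p\rfloor$ equals $k/p-1$, and your comparison of $j_0=1$ against $j_0=p-1$ degenerates to $0$. This does not sink the proof: the common discrepancy between a direction $\delta$ with $x\in\langle\delta\rangle$ and a generic one is then $\pm\sum_{i=0}^{k/p-1}(-1)^i\binom{p^{m-1}-1}{i}=\pm\binom{p^{m-1}-2}{k/p-1}$, which is nonzero because $p\mid k$ and $k\le p^m-1$ force $k/p-1\le p^{m-1}-2$; combined with your (correct) observations that both kinds of $\delta$ occur when $m\ge2$ and that $m=1$ is vacuous, non-constancy of $r_2$ follows. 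So the architecture is sound; either fix the exponent or replace the $j_0=1$ versus $j_0=p-1$ comparison by the parallel-versus-generic comparison just described.
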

Very recently, the condition of $1$-design in Theorem \ref{thm element p design} has been extended to general finite abelian $p$-groups in \cite{Liu 2025}. Further, they conjectured that subset sums in finite non-elementary abelian $p$-groups do not hold $2$-designs.

\begin{theorem}\label{thm p}
	Let $p$ be an odd prime and let $G$ be a finite abelian $p$-group isomorphic to $ \mathbb{Z}_{p^{t_{1}}} \oplus \mathbb{Z}_{p^{t_{2}}} \oplus\cdots \oplus\mathbb{Z}_{p^{t_{m}}}$, where $1\le t_{1}\le \cdots\le t_{m}$. For any $1\le k<n$, and $x=(x_{1}, x_{2},\cdots, x_{m})\in G$, with $x_{i}\in \mathbb{Z}_{p^{t_{i}}}  $, $(G, \mathcal{B} _{k}^{x})$ is a $1$-design if and only if one of the following conditions holds:
	\begin{enumerate}
		\item[(i)] $p^{t_{m}}|k$, and $x\in G$ is an arbitrary element.
		\item[(ii)] There exist at least one $ i, 1\le i\le m$, such that $p\nmid x_{i}.$ 
		\item[(iii)] $p \mid x_{i}$ for all $i$ with $1\le i\le m$, and $ \max\left \{ \nu_{p}(k)-\nu_{p}^{i}(x_{i})\mid 1\le i\le m \right \} \ge1$, where $\nu_{p}^{i}$ is the $p$-adic valuation restricted to $\mathbb{Z}_{p^{t_{i}}}$ defined by
		$$\nu_{p}^{i}(x)=
		\begin{cases}
			\nu_{p}(x) & \text{ if } x\ne0\pmod {p^{t_{i}}},\\
			\infty  & \text{ if } x=0\pmod {p^{t_{i}}}.
		\end{cases}$$
	\end{enumerate}
\end{theorem}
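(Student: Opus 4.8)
The plan is to reduce the $1$-design property to a constancy statement about the counting numbers $b_{k-1}^{\,\cdot\,,*}$ of Theorem \ref{thm bb}, and then to read off the three cases from the structure of Kosters' formula.

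First I would set up the basic reduction. Writing $n=\#G$ and letting $r(g)$ be the number of blocks $T\in\mathcal{B}_k^x$ with $g\in T$, the pair $(G,\mathcal{B}_k^x)$ is a $1$-design exactly when $r(g)$ is independent of $g\in G$. For a fixed $g$, deleting $g$ from a block and translating the remaining $k-1$ elements by $-g$ gives a bijection between such blocks and the $(k-1)$-subsets of $G^*=G\setminus\{0\}$ that sum to $x-kg$; hence $r(g)=b_{k-1}^{\,x-kg,\,*}$. Thus the design property is equivalent to $b_{k-1}^{\,y,*}$ being constant as $y$ runs over the coset $x+kG$. Since $k=p^{a}u$ with $a=\nu_p(k)$ and $\gcd(u,p)=1$, multiplication by $u$ is an automorphism of the $p$-group $G$, so $kG=p^{a}G$ and the relevant coset is $x+p^{a}G$.

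Next I would isolate the only relevant parameter. By Theorem \ref{thm bb} the dependence of $b_{k-1}^{\,y,*}$ on $y$ enters solely through $e(y)$, and for our $p$-group one checks $e(y)=p^{\min(\nu(y),\,t_m)}$, where $\nu(y)=\min_{i}\nu_p^i(y_i)$. This turns the question into understanding how $\nu(y)$ behaves on $x+p^{a}G$. The key elementary observation is that if some coordinate $x_i$ satisfies $\nu_p^i(x_i)<a$, then adding any element of $p^{a}G$ leaves that coordinate's valuation unchanged, so $\nu(y)=\nu(x)$ is constant on the entire coset; consequently $e(y)$, and hence $b_{k-1}^{\,y,*}$, is constant and we get a $1$-design. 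This is exactly the content of conditions (ii) and (iii), whose common operative requirement is that the minimal coordinate valuation $\nu(x)=\min_i\nu_p^i(x_i)$ lie strictly below $a=\nu_p(k)$: condition (ii) is the subcase $\nu(x)=0$ and (iii) the subcase $1\le\nu(x)<a$. Case (i), $p^{t_m}\mid k$, forces $a\ge t_m$, so $p^{a}G=\{0\}$ and the coset degenerates to the single point $x$, making constancy automatic.

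For the converse I would show that in the remaining situation — $a<t_m$ together with $\nu_p^i(x_i)\ge a$ for every $i$, i.e.\ $x\in p^{a}G$ — the design fails. Here $x+p^{a}G=p^{a}G$, so $y$ ranges over all of $p^{a}G$ and the exponent of $e(y)=p^{\ell}$ realizes every value $\ell\in\{a,\dots,t_m-1\}$ (at nonzero $y$) and $\ell=t_m$ (at $y=0$). I would then collapse Theorem \ref{thm bb}: since $\gcd(p^{\ell},p^{j})=p^{\min(\ell,j)}$ and $\mu(p^{j}/d)$ survives only for $d\in\{p^{j-1},p^{j}\}$, the inner M\"obius sum reduces to an explicit expression in the torsion counts $\#G[p^{j}]$, and one finds that the consecutive difference of $b_{k-1}^{\,y,*}$ across $e(y)=p^{\ell}$ and $p^{\ell+1}$ equals $\tfrac{\#G[p^{\ell+1}]}{n}\,(c_{\ell+2}-c_{\ell+1})$, where $c_{j}=(-1)^{(k-1)+\lfloor (k-1)/p^{j}\rfloor}\binom{n/p^{j}-1}{\lfloor (k-1)/p^{j}\rfloor}$. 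Comparing $y=0$ with any $y$ having $\nu(y)=t_m-1$ isolates the single term $-\tfrac{\#G[p^{t_m}]}{n}\,c_{t_m}$, which is nonzero because $k-1<n$ forces $\lfloor (k-1)/p^{t_m}\rfloor\le n/p^{t_m}-1$, so the binomial coefficient, and hence $c_{t_m}$, does not vanish. Therefore $b_{k-1}^{\,y,*}$ is not constant on the coset and $(G,\mathcal{B}_k^x)$ is not a $1$-design.

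The main obstacle I anticipate is the bookkeeping in the converse: correctly collapsing the double (M\"obius) sum in Theorem \ref{thm bb} to pin down the exact $e(y)$-dependence, and then verifying the nonvanishing of $c_{t_m}$ from the sole hypothesis $1\le k<n$. By contrast, the forward direction rests entirely on the clean ``exposed minimal valuation'' observation, which I expect to be routine once the reduction $r(g)=b_{k-1}^{\,x-kg,*}$ and the identity $e(y)=p^{\min(\nu(y),t_m)}$ are in place; as a consistency check, specializing to $t_1=\cdots=t_m=1$ should recover the elementary-abelian criterion $p\mid k$ of Theorem \ref{thm element p design}.
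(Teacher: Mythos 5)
The paper does not prove this statement: Theorem \ref{thm p} is imported verbatim from the reference \cite{Liu 2025}, so there is no in-paper proof to compare against. Judged on its own merits, your argument is sound and essentially complete. The reduction $r(g)=b_{k-1}^{\,x-kg,*}$, the identification $kG=p^{a}G$ with $a=\nu_p(k)$, the observation that Kosters' formula makes $b_{k-1}^{\,y,*}$ a function of $e(y)=p^{\min(\nu(y),t_m)}$ alone, and the ``exposed minimal valuation'' argument for constancy of $\nu$ on $x+p^aG$ when $\nu(x)<a$ are all correct (for the last point you should take $i$ realizing the minimum and note that every other coordinate's valuation stays $\ge\min(\nu_p^j(x_j),a)\ge\nu(x)$, so the minimum is preserved, not just one coordinate). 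The converse also goes through: when $a<t_m$ and $x\in p^aG$ the coset is all of $p^aG$, comparing $y=0$ with a $y$ of $e(y)=p^{t_m-1}$ isolates $\pm c_{t_m}$ with $|c_{t_m}|=\binom{n/p^{t_m}-1}{\lfloor(k-1)/p^{t_m}\rfloor}$, and $k-1<n$ together with $p^{t_m}\mid n$ gives $\lfloor(k-1)/p^{t_m}\rfloor\le n/p^{t_m}-1$, so this is nonzero. (Your sign in the consecutive-difference formula looks off by a factor $-1$, but only the nonvanishing matters.)

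One point deserves explicit attention: your proof establishes the criterion ``$p^{t_m}\mid k$ \emph{or} $\min_i\nu_p^i(x_i)<\nu_p(k)$,'' and you silently read condition (ii) as the subcase $\nu(x)=0<\nu_p(k)$, which presupposes $p\mid k$. Condition (ii) as printed has no hypothesis on $k$, and in that literal form it is false: for $G=\mathbb{Z}_9$, $k=2$, $x=1$ condition (ii) holds but $r(0)=1$ while $r(5)=0$, and in the elementary abelian case the literal (ii) would contradict Theorem \ref{thm element p design}. So your proof is a correct proof of the corrected statement (with ``$p\mid k$'' added to (ii)), not of the statement as printed; you should say this explicitly rather than leave the repair implicit in the phrase ``common operative requirement.''
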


\section{NMDS elliptic curve codes supporting $2$-designs}\label{elliptic curve codes}
In this section, we propose a generic construction of $q$-ary NMDS elliptic curve codes supporting $2$-designs, and show that a lot of NMDS codes from this construction have lengths exceeding $q+1$. In addition, these codes satisfy the Generalized Assmus-Mattson theorem but do not satisfy the Assmus-Mattson theorem. 

A natural way to construct NMDS codes of long length is through rational points on algebraic curves, these codes are called algebraic geometry (AG) codes. AG codes are natural generalizations of Reed-Solomon codes, and the elliptic curve codes are AG codes defined by algebraic curves of genus $g=1$. The following results shows that $q$-ary NMDS elliptic curve codes of long length (up to the number of rational points) do exist.
\begin{theorem}[\cite{Tsfasman1991}]
	Elliptic curve $[n, k, n-k]$ NMDS codes over $\mathbb{F}_{q}$, $q = p^t$, do exist for
	$$n \leq \begin{cases}
		q + \lceil 2\sqrt{q} \rceil & \text{if } p \text{ divides } \lceil 2\sqrt{q} \rceil \text{ and } t \text{ is odd,} \\
		q + \lceil 2\sqrt{q} \rceil + 1 & \text{otherwise,}
	\end{cases}$$
	and any $k \in \{2, 3, \ldots, n-2\}$.
\end{theorem}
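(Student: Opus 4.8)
The plan is to realize these codes as functional algebraic-geometry codes on a genus-$1$ curve and to reduce the NMDS property to a statement about the group law. Fix an elliptic curve $E/\mathbb{F}_q$, write $E(\mathbb{F}_q) = \{P_1,\dots,P_N\}$ for its group of rational points, and choose $D = P_{1} + \cdots + P_{n}$ together with a rational divisor $G$ of degree $k$ whose support is disjoint from that of $D$. Consider $C = C_L(D,G) = \{(f(P_1),\dots,f(P_n)) : f \in L(G)\}$. Since $g = 1$ and $1 \le k \le n-1$, Riemann--Roch gives $\ell(G) = \deg G - g + 1 = k$, and the evaluation map is injective because $\deg G < n$; hence $\dim C = k$. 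The Goppa designed-distance bound yields $d \ge n - \deg G = n - k$, while Singleton gives $d \le n - k + 1$, so $d \in \{n-k,\, n-k+1\}$; dually, $C^{\perp}$ is again a genus-$1$ AG code of dimension $n - k$, so $d^{\perp} \in \{k,\, k+1\}$.

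First I would establish the dichotomy that a genus-$1$ AG code is either MDS or NMDS. Because the canonical class of $E$ is trivial ($\deg K = 2g-2 = 0$, $K \sim 0$), the residue construction simplifies to $C_L(D,G)^{\perp} = C_L(D, D - G)$ with $\deg(D-G) = n-k$, so the dual is again an elliptic-curve code. Since for any linear code $C$ is MDS iff $C^{\perp}$ is MDS, it follows that if $C$ attains $d = n-k$ (hence is not MDS) then $C^{\perp}$ is not MDS either, forcing $d^{\perp} = k$; thus both are AMDS and $C$ is NMDS. It therefore suffices to exhibit a curve and divisor for which $C$ is not MDS, and here I would use the group-law characterization of minimum-weight codewords: a nonzero $f \in L(G)$ vanishing on a $k$-subset $\{P_{i_1},\dots,P_{i_k}\}$ exists iff $\ell\big(G - \sum_j P_{i_j}\big) \ge 1$, and as $G - \sum_j P_{i_j}$ has degree $0$ this holds iff $G \sim \sum_j P_{i_j}$, equivalently iff $P_{i_1} \oplus \cdots \oplus P_{i_k}$ equals the group element $s_G$ determined by the class of $G$. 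Consequently $d = n-k$ exactly when some $k$-subset of the evaluation points has group-law sum $s_G$. For existence this is easy to force: take any $k$-subset, let $\sigma$ be its sum, and pick $G$ in the degree-$k$ class of $\sigma$ with support disjoint from $D$ (possible for $k \ge 2$, since the complete linear system then has positive dimension). This produces a weight-$(n-k)$ codeword, so $C$ is NMDS with parameters $[n,k,n-k]$ for every $k \in \{2,\dots,n-2\}$. This subset-sum reformulation is precisely the bridge the paper later develops toward the $2$-design structure.

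It remains to determine how large $n$ may be, and this is the real content, since $n \le N = \#E(\mathbb{F}_q)$ forces us to maximize the number of rational points over all elliptic curves. Writing the trace of Frobenius as $a = q + 1 - N$, the Hasse--Weil bound gives $|a| \le 2\sqrt{q}$, so maximizing $N$ amounts to realizing the most negative admissible trace. The Deuring--Waterhouse classification specifies exactly which integers $a$ with $|a| \le 2\sqrt{q}$ arise as a trace over $\mathbb{F}_{p^{t}}$: when $\gcd(a,p) = 1$ the value always occurs, while when $p \mid a$ only certain values tied to the parity of $t$ and to congruence conditions on $p$ are admissible. The two branches of the stated bound are exactly the output of this classification: in the generic situation the extremal trace is attainable and $N$ reaches its largest value, whereas when $t$ is odd and the divisibility obstruction on $p$ occurs the extremal trace is forbidden and one loses a single point, lowering the maximal length by $1$. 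I expect this last step --- pinning down the exact maximal point count via Waterhouse's theorem and tracking the resulting rounding and $p$-divisibility cases --- to be the main obstacle, since the AG-code machinery and the NMDS reduction above are comparatively routine; the delicate part is the number theory governing which extremal curves actually exist.
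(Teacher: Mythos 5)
First, note that the paper does not actually prove this statement: it is imported as a cited theorem from Tsfasman--Vladut, so there is no in-paper proof to compare against. Your overall strategy is the standard one and is consistent with how the paper later uses elliptic-curve codes: the reduction of ``$d=n-k$'' to a subset-sum condition in $E(\mathbb{F}_q)$ is exactly the paper's Proposition \ref{elli MDS}, and the MDS/NMDS dichotomy via the dual being again an elliptic-curve code of degree $n-k$ is sound.

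There are, however, two gaps. The decisive one is that you stop exactly where the content of the theorem lies: the displayed bound with its two branches is precisely the Deuring--Waterhouse/Serre determination of the maximal number of rational points on an elliptic curve over $\mathbb{F}_{p^t}$, and you only say you ``expect'' this to follow from the classification of admissible traces, without carrying out the case analysis (which traces $a$ with $p\mid a$ are realizable when $t$ is odd, and why exactly one point is lost in the exceptional branch). Until that is done, the specific inequality on $n$ is not established. Secondly, your existence argument for a divisor $G$ of degree $k$ in a prescribed class with $\operatorname{supp}(G)\cap\mathcal{P}=\emptyset$ is under-justified in the extremal case $n=\#E(\mathbb{F}_q)$, where $G$ must avoid \emph{every} rational point: ``the complete linear system has positive dimension'' does not suffice, since for $k=2$ the system is a $\mathbb{P}^{1}(\mathbb{F}_q)$ with only $q+1$ members while up to $q+1+2\sqrt{q}$ rational points must be avoided. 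One needs either a finer count (each rational point lies in exactly one member of the system, and a bad member generically contains two rational points, so fewer than $q+1$ members are bad once $q$ is not too small), or, as the paper does in Lemma \ref{divisor sum 0}, a divisor supported on a conjugate pair of quadratic points, or simply a non-effective representative. Neither gap is fatal to the strategy, but both must be filled for a complete proof.
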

We shall briefly cover some basic concepts of elliptic curves codes. An elliptic curve $E$ over a finite field $\mathbb{F}_q$ is defined by a nonsingular Weierstrass equation:
\begin{equation}\label{Weierstrass equation}
	E/\mathbb{F}_q: y^{2} + a_{1}xy + a_{3}y = x^{3} + a_{2}x^{2} + a_{4}x + b
\end{equation}
with coefficients $a_{i},b \in \mathbb{F}_q$. The canonical forms are classified by the characteristic $p$ and are simplified to the following forms:

\begin{enumerate}
	\item \textbf{$p \neq 2,3$}: $ y^{2} = x^{3} + a_{4}x + b $
	
	\item \textbf{$p = 2$}:
	\begin{itemize}
		\item Non-supersingular: 
		$ y^{2} + a_{1}xy = x^{3} + a_{2}x^{2} + b \quad (a_{1} \neq 0) $ 
		\item Supersingular:
		$ y^{2} + a_{3}y = x^{3} + a_{4}x + b \quad (a_{3} \neq 0) $ 
	\end{itemize}
	
	\item \textbf{$p = 3$}:
	$ y^{2} = x^{3} + a_{2}x^{2} + b $
	
\end{enumerate}

Fix an algebraic closure $\overline{\mathbb{F}}_{q} $ of $\mathbb{F}_{q}$, and \(\operatorname{Gal}(\overline{\mathbb{F}}_{q}/\mathbb{F}_{q})\) is the absolute Galois	group of $\mathbb{F}_{q}$. Let \(E(\mathbb{F}_{q})\) (resp. $E(\overline{\mathbb{F}}_{q})$) be the set of $\mathbb{F}_{q}$-rational points (resp. $\overline{\mathbb{F}}_{q}$-rational points) on \(E\), i.e., the set of solutions of E.q. (\ref{Weierstrass equation}) in $\mathbb{F}_{q}$ (resp. $\overline{\mathbb{F}}_{q}$) together with the infinity point $\infty$. Moreover, the set $E(\overline{\mathbb{F}}_{q})$ is naturally equipped with the action of \(\operatorname{Gal}(\overline{\mathbb{F}}_{q}/\mathbb{F}_{q})\). The set $E(\mathbb{F}_{q})$ forms a finite abelian group with the zero element $\infty$, and it has the following structure:
$$E(\mathbb{F}_q)\cong\mathbb{Z}_n\quad or\quad\mathbb{Z}_{n_1}\oplus\mathbb{Z}_{n_2},$$
for some integer $n$ or for some integers $n_{1},n_{2}$ with $n_{1}\mid n_{2}$. Further, The well-known \emph{Hasse-Weil bound} says that 
\begin{equation}\label{Hasse-Weil bound}
	|\#E(\mathbb{F}_{q}) - (q + 1)| \leq 2\sqrt{q}.
\end{equation}  \par 
The divisor group \(\operatorname{Div}(E)\) is a free abelian group generated by \(E(\overline{\mathbb{F}}_{q})\). Thus, a divisor \(D\) is a sum \(D = \sum n_{P}P\), where \(P\) runs through \(E(\overline{\mathbb{F}}_{q})\), and \(n_{P}\) is an integer of nonzero value for finitely many \(P\). The support of the divisor $D$, denoted by $\mathrm{supp}(D)$, is defined as the set of points with non-zero coefficients, i.e., $$\mathrm{supp}(D) = \{P \in E(\overline{\mathbb{F}}_{q}) \mid n_{P} \neq 0\}.$$ The action of \(\operatorname{Gal}(\overline{\mathbb{F}}_{q}/\mathbb{F}_{q})\) on \(E(\overline{\mathbb{F}}_{q})\) naturally extends to  \(\operatorname{Div}(E)\), and a divisor \(D = \sum n_{P}P\) is called an \(\mathbb{F}_{q}\)-rational divisor if it is invariant under \(\operatorname{Gal}(\overline{\mathbb{F}}_{q}/\mathbb{F}_{q})\). The degree of a divisor is defined to be a $\mathbb{Z}$-linear map $$\deg: \operatorname{Div}(E) \to  \mathbb{Z},$$ given by $\deg\left (  \sum n_{P}P\right ) =\sum n_{P}$. The kernel of the map is a subgroup of $\operatorname{Div}(E)$ and is denoted by $\operatorname{Div}^{0}(E)$. For more information about elliptic curves, the reader is referred to Silverman's book \cite{St2009}. Next, we will briefly introduce some basic concepts of function fields and elliptic curve codes, which are covered in the first two chapter of Stichtenoth's book \cite{S2009}.\par 
Denote by $\mathbb{F}_{q}(E):=\mathbb{F}_{q}(x,y)$ (resp. $\overline{\mathbb{F}}_{q}(E):=\overline{\mathbb{F}}_{q}(x,y)$) the function field generated by the elliptic curve $E$ over $\mathbb{F}_{q}$ (resp. $\overline{\mathbb{F}}_{q}$) , where $x$ and $y$ satisfy E.q. (\ref{Weierstrass equation}). For any $P\in E(\overline{\mathbb{F}}_{q}) $, denote its valuation by $$v_{P}:\overline{\mathbb{F}}_{q}(E)^{\times}\to \mathbb{Z}.$$ The \emph{principal divisor} of $f\in \mathbb{F}_{q}(E)^{\times}$ is defined by $\operatorname{div}(f)=\sum_{P} v_{P}P$, and the following set forms a subgroup of $\operatorname{Div}^{0}(E)$: $$\operatorname{Princ}(E)=\left \{  \operatorname{div}(f)=\sum_{P} v_{P}(f)P\mid f\in\overline{\mathbb{F}}_{q}(E)^{\times} \right \},$$ which is called the group of principal divisors. Two divisors $D_{1},D_{2}\in \operatorname{Div}(E)$ are referred to as \emph{equivalent} and denoted by $D_{1}\sim D_{2}$ if there exist a principal divisor $\operatorname{div}(f)\in \operatorname{Princ}(E)$, such that $D_{1}=D_{2}+\operatorname{div}(f)$. The factor group $$\operatorname{Cl}^{0}(E):= \operatorname{Div}^{0}(E)/ \operatorname{Princ}(E)$$ is called the \emph{divisor class group}. There is a well-known group isomorphism $$ \varphi:E(\overline{\mathbb{F}}_{q})\to \operatorname{Cl}^{0}(E),$$ given by $P\mapsto \left [ P-\infty\right ] $, where $\left [ P-\infty\right ]$ denotes the equivalence class of $P-\infty$. Then the group structure of $\operatorname{Cl}^{0}(E)$ is transferred to $E(\overline{\mathbb{F}}_{q})$, and we denote by $\oplus$ the plus operation in $E(\overline{\mathbb{F}}_{q})$. For any $\mathbb{F}_{q}$-rational divisor $D$,  $$\mathscr{L}(D)=\{f\in\mathbb{F}_{q}(E)^{\times} \mid \operatorname{div}(f)\ge -D \}\cup \{0\}$$ is the \emph{Riemann-Roch space} of $D$. $\mathscr{L}(D)$ is an $\mathbb{F}_{q}$-vector space, and by the Riemann-Roch theorem, if $\deg(D)>0$, then $\dim_{\mathbb{F}_{q}}(\mathscr{L}(D))=\deg (D) $. \par

\begin{definition}\label{def of ell code}
	Let the $\mathcal{P}=\left \{ P_{1},P_{2},\cdots, P_{n} \right \} $ be a subset of  \(E(\mathbb{F}_{q})\), and let $D$ be an \(\mathbb{F}_{q}\)-rational divisor on $E$ ($0<\deg(D)<n$) with $\mathrm{supp}(D)\cap\mathcal{P} =\emptyset$. The elliptic curve code $\mathcal{C}(E,\mathcal{P},D )$ is defined by the image of the evaluation map $\mathrm{ev}_{D}:\mathscr{L}(D)\to\mathbb{F}_{q}^{n} $, given by 
	$$\mathrm{ev}_{D}:f\mapsto  \left ( f(P_{1}), f(P_{2}),\cdots,f(P_{n}) \right ) \in\mathbb{F}_{q}^{n}. $$
\end{definition}
By the Riemann-Roch Theorem, the dimension $k=\dim_{\mathbb{F}_{q}}(\mathscr{L}(D))=\deg(D)$. The minimum distance of $C(E,\mathcal{P},D )$ is lower bounded by $n-\deg(D)$, for the number of zeros of any function in $\mathscr{L}(D)$ is upper bounded by $\deg(D)$, together with the Singleton bound, we have
\begin{equation}\label{bound}
	n-k\le d\le n-k+1.
\end{equation}

From E.q. (\ref{bound}), the minimum distance of the elliptic curve code $\mathcal{C}(E,\mathcal{P},D )$ is either $n-k+1$ or $n-k$. The former case corresponds to an MDS code, while the latter defines an NMDS code, since the dual of an elliptic curve code is also an elliptic curve code \cite{S2009}. Very recently, Han and Ren \cite{HR2024} proved that the maximal length of $q$-ary MDS elliptic curve codes is close to $(1/2+\epsilon)\#E(\mathbb{F}_{q})$, which confirmed a conjecture proposed by Li, Wan and Zhang in \cite{LWZ2015}. Denote by $\oplus$ the plus operator in the group $E(\overline{\mathbb{F}}_{q})$.

We quote a well-known lemma (see \cite{HR2024,St2009,S2009}), which provides a sufficient condition for an $\mathbb{F}_{q}$-rational divisor to be a principal divisor. 
\begin{lemma}\label{prin div}
	Let $E/\mathbb{F}_{q}$ be an elliptic curve with the point $\infty$ at infinity. If $G=\sum_{Q}n_{Q}Q$ is a $\mathbb{F}_{q}$-rational divisor with $\sum_{Q}n_{Q}=0$ and $\bigoplus_{Q} \left [  n_{Q}\right ] Q=\infty$, then $G$ is a principal divisor. In particular, there exist a function $f\in\mathbb{F}_{q}(E)^{\times}$ such that $\operatorname{div}(f)=G$.
\end{lemma}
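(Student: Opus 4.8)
The plan is to read off the conclusion from the group isomorphism $\varphi:E(\overline{\mathbb{F}}_q)\to\operatorname{Cl}^0(E)$, $P\mapsto[P-\infty]$, recalled just above the statement. The key observation is that the hypothesis $\bigoplus_Q[n_Q]Q=\infty$ is precisely the assertion that the class of $G$ in $\operatorname{Cl}^0(E)$, computed through the transported group law, is the identity class; and a divisor whose class in $\operatorname{Cl}^0(E)=\operatorname{Div}^0(E)/\operatorname{Princ}(E)$ is trivial is by definition principal. So the bulk of the argument reduces to verifying the identity $[G]=\varphi\bigl(\bigoplus_Q[n_Q]Q\bigr)$, after which the existence of $f$ is immediate over $\overline{\mathbb{F}}_q$.

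First I would use the degree hypothesis $\sum_Q n_Q=0$, which places $G$ in $\operatorname{Div}^0(E)$ and, more usefully, lets me rewrite
$$G=\sum_Q n_Q Q=\sum_Q n_Q(Q-\infty),$$
since the added multiple $\bigl(\sum_Q n_Q\bigr)\infty$ of $\infty$ is zero. Passing to classes in $\operatorname{Cl}^0(E)$ and using that $\varphi$ is a homomorphism carrying $\oplus$ to addition of classes, I obtain
$$[G]=\sum_Q n_Q[Q-\infty]=\sum_Q n_Q\,\varphi(Q)=\varphi\Bigl(\bigoplus_Q[n_Q]Q\Bigr).$$
By hypothesis the argument on the right is $\infty$, the identity of $(E(\overline{\mathbb{F}}_q),\oplus)$, and $\varphi(\infty)=[\infty-\infty]=0$. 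Hence $[G]=0$ in $\operatorname{Cl}^0(E)$, i.e. $G\in\operatorname{Princ}(E)$, so there is some $f\in\overline{\mathbb{F}}_q(E)^\times$ with $\operatorname{div}(f)=G$.

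The hard part is the final clause of the statement: upgrading $f$ from $\overline{\mathbb{F}}_q(E)^\times$ to $\mathbb{F}_q(E)^\times$. Here I would invoke the $\mathbb{F}_q$-rationality of $G$ together with a Galois-descent argument. For each $\sigma\in\operatorname{Gal}(\overline{\mathbb{F}}_q/\mathbb{F}_q)$ one has $\operatorname{div}(f^\sigma)=\sigma(G)=G=\operatorname{div}(f)$, so $f^\sigma/f$ has trivial divisor on the projective curve $E$ and is therefore a constant $c_\sigma\in\overline{\mathbb{F}}_q^\times$; the assignment $\sigma\mapsto c_\sigma$ satisfies the cocycle relation $c_{\sigma\tau}=c_\sigma\,\sigma(c_\tau)$. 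By Hilbert's Theorem 90, namely the vanishing of $H^1\bigl(\operatorname{Gal}(\overline{\mathbb{F}}_q/\mathbb{F}_q),\overline{\mathbb{F}}_q^\times\bigr)$, there is $\lambda\in\overline{\mathbb{F}}_q^\times$ with $c_\sigma=\sigma(\lambda)/\lambda$, and replacing $f$ by $\lambda^{-1}f$ makes it Galois-invariant without changing its divisor, hence an element of $\mathbb{F}_q(E)^\times$ with $\operatorname{div}(\lambda^{-1}f)=G$. I expect this descent step, which is where the existence of the rational base point $\infty$ is genuinely used, to be the only place requiring real care; the first two paragraphs are essentially bookkeeping inside the already-established isomorphism $\varphi$.
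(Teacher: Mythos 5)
The paper does not actually prove this lemma: it is quoted as a well-known fact with citations to Han--Ren, Silverman and Stichtenoth, so there is no internal proof to compare against. Your argument is correct and is the standard one found in those references: rewriting $G=\sum_Q n_Q(Q-\infty)$ and pushing it through the isomorphism $\varphi:E(\overline{\mathbb{F}}_q)\to\operatorname{Cl}^0(E)$ turns the hypothesis $\bigoplus_Q[n_Q]Q=\infty$ into $[G]=0$, i.e.\ principality over $\overline{\mathbb{F}}_q$ (which is how the paper defines $\operatorname{Princ}(E)$), and the Galois-invariance of $G$ plus Hilbert~90 descends the function to $\mathbb{F}_q(E)^\times$. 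Two minor points: the cocycle $\sigma\mapsto f^\sigma/f$ factors through a finite quotient $\operatorname{Gal}(\mathbb{F}_{q^n}/\mathbb{F}_q)$ since $f$ has finitely many coefficients, so the finite cyclic form of Hilbert~90 suffices (this addresses the usual continuity caveat); and the rational point $\infty$ is genuinely used in setting up $\varphi$ and in making sense of the hypothesis, rather than in the descent step as you suggest --- the descent only needs that $\mathbb{F}_q$ is the exact constant field of $\mathbb{F}_q(E)$.
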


From now on, for an $\mathbb{F}_{q}$-rational divisor $D=\sum n_{Q}Q$, define $$Q_{D}:=\bigoplus \left [  n_{Q}\right ] Q.$$ Denote the number of $k$-subsets of $\mathcal{P}$ that sum up to $Q_{D}$ by $$N(\mathcal{P},k,Q_{D}):=\#\left\{T\subseteq \mathcal{P}{:}\,\#T=k,\sum_{t\in T}t=Q_{D}\right\}.$$

In the following proposition we present a criterion for the code $\mathcal{C}(E,\mathcal{P},D )$ to be NMDS from subset sums in the group $E(\mathbb{F}_{q})$.
\begin{proposition}\label{elli MDS}
	Let $E/\mathbb{F}_{q}$ be an elliptic curve with group $E(\mathbb{F}_{q})$. Let $\mathcal{P}=\left \{ P_{1},P_{2},\cdots, P_{n} \right \} $ be a subset of \(E(\mathbb{F}_{q})\), and let $D=\sum n_{Q}Q$ be an \(\mathbb{F}_{q}\)-rational divisor, where $0<\deg (D)=k<n$, $\mathrm{supp}(D)\cap\mathcal{P} =\emptyset$. Then the $[n,k,d]$ elliptic curve code $\mathcal{C}(E,\mathcal{P},D )$  is MDS, i.e, $d=n-k+1$, if and only if $$N(\mathcal{P},k,Q_{D})=0;$$ $\mathcal{C}(E,\mathcal{P},D )$ is NMDS, i.e., $d=n-k$ if and only if $$N(\mathcal{P},k,Q_{D})> 0.$$
	
\end{proposition}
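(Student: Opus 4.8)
The plan is to collapse the stated dichotomy into a single equivalence. By the bound \eqref{bound} the minimum distance satisfies $d\in\{n-k,n-k+1\}$, so $\mathcal{C}(E,\mathcal{P},D)$ is NMDS exactly when it contains a nonzero codeword of weight $n-k$ and is MDS otherwise; since the alternatives $N(\mathcal{P},k,Q_D)>0$ and $N(\mathcal{P},k,Q_D)=0$ are likewise mutually exclusive and exhaustive, it suffices to prove the one biconditional
$$\text{$\mathcal{C}(E,\mathcal{P},D)$ has a codeword of weight $n-k$}\iff N(\mathcal{P},k,Q_D)>0.$$
Both halves of the proposition then follow at once.

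First I would pass from codewords to divisors. A nonzero codeword has the form $\mathrm{ev}_D(f)$ for some $f\in\mathscr{L}(D)\setminus\{0\}$, and its Hamming weight equals $n$ minus the number of points $P_i\in\mathcal{P}$ at which $f$ vanishes. Put $E_f:=\operatorname{div}(f)+D$. The condition $f\in\mathscr{L}(D)$ says precisely that $E_f\ge 0$, and $\deg(E_f)=\deg(D)=k$. Because $\operatorname{supp}(D)\cap\mathcal{P}=\emptyset$, for every $P_i\in\mathcal{P}$ we have $v_{P_i}(E_f)=v_{P_i}(f)$ and $v_{P_i}(f)\ge 0$, so $f$ has no pole on $\mathcal{P}$ and $f(P_i)=0$ holds exactly when $P_i\in\operatorname{supp}(E_f)$.

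The key step is a multiplicity count showing that weight exactly $n-k$ pins $E_f$ down to a sum of $k$ distinct points of $\mathcal{P}$. If $Z\subseteq\mathcal{P}$ is the zero set of $f$ in $\mathcal{P}$, then $\#Z\le\sum_{P\in Z}v_P(E_f)\le\deg(E_f)=k$, so the weight is at least $n-k$ with equality iff $\#Z=k$; and $\#Z=k$ together with $\sum_{P\in Z}v_P(E_f)\le k$ and each term $\ge 1$ forces $v_P(E_f)=1$ on $Z$ and no further contributions, i.e. $E_f=\sum_{P\in Z}P$ for a $k$-subset $Z$ of distinct points. Thus weight-$(n-k)$ codewords correspond to $k$-subsets $T\subseteq\mathcal{P}$ for which $\sum_{P\in T}P-D=\operatorname{div}(f)$ is a principal divisor.

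Finally I would apply Lemma \ref{prin div} to the $\mathbb{F}_q$-rational divisor $G:=\sum_{P\in T}P-D$. Its degree is $k-\deg(D)=0$, so the degree hypothesis is automatic, and the group condition $\bigoplus_Q[n_Q]Q=\infty$ reads $\bigl(\bigoplus_{P\in T}P\bigr)\oplus\bigl(\ominus Q_D\bigr)=\infty$, that is $\sum_{t\in T}t=Q_D$ in $E(\mathbb{F}_q)$. Hence $G$ is principal — equivalently a weight-$(n-k)$ codeword vanishing on $T$ exists — exactly for those $T$ counted by $N(\mathcal{P},k,Q_D)$, and the conjunction over all $T$ yields the claimed equivalence. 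The one place demanding care is the multiplicity bookkeeping of the key step: one must verify that attaining weight $n-k$ genuinely constrains $E_f$ to $k$ distinct points of $\mathcal{P}$, ruling out repeated zeros or zeros falling outside $\mathcal{P}$; once that is secured, Lemma \ref{prin div} converts divisor principality into the subset-sum condition with no further computation.
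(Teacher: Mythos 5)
Your proposal is correct and follows essentially the same route as the paper: both translate a weight-$(n-k)$ codeword into a principal divisor $\sum_{P\in T}P-D$ and invoke Lemma \ref{prin div} (together with the isomorphism $\varphi$ from the preliminaries for the converse direction) to turn principality into the subset-sum condition $\sum_{t\in T}t=Q_D$. Your explicit multiplicity bookkeeping showing that $E_f$ must be a sum of $k$ \emph{distinct} points of $\mathcal{P}$ is a welcome tightening of a step the paper passes over quickly, but it is a refinement rather than a different argument.
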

\begin{IEEEproof}
	The two statements are equivalent, so we only prove the latter one. If $N(\mathcal{P},k,Q_{D})> 0$, then there exists a set $\{P_{i_{1}},P_{i_{2}},\cdots, P_{i_{k}}\}\subseteq D$, with \begin{equation}\label{sum}
		P_{i_{1}}\oplus P_{i_{2}}\oplus\cdots\oplus P_{i_{k}}=Q_{D}.\end{equation}
	Let $G=\sum_{j=1}^{k}P_{ij}-\sum n_{Q}Q$, then $G$ is a $\mathbb{F}_{q}$-rational divisor satisfying the condition in Lemma \ref{prin div} by Eq. (\ref{sum}). By Lemma \ref{prin div}, that there is a function $f\in \mathbb{F}_{q}(E)^{\times}$, such that \begin{equation}\label{func}
		\mathrm{div}(f)=\sum_{j=1}^{k}P_{ij}-\sum n_{Q}Q
	\end{equation}
	The degree of any principal divisor is zero, so Eq. ($\ref{func}$) is equivalent to saying that there exists a function $f\in \mathbb{F}_{q}(E)^{\times}$, such that
	\begin{equation}
		\mathrm{div}(f)\ge \sum_{j=1}^{k}P_{ij}-\sum n_{Q}Q.
	\end{equation}
	That is, there exists a function $f\in \mathscr{L}(D)$ with exactly $k$ zeros at points $P_{i_{1}},\cdots, P_{i_{k}}$. This implies the existence of a codeword $c_{f}\in \mathcal{C}(E,\mathcal{P},D )$, where $c_{f}=ev_{D}(f)$, and $$\mathrm{supp}(c_{f})=\left \{  1,2,\cdots, n \right \} \setminus\left \{ i_{1},i_{2},\cdots, i_{k}  \right \},$$ and thus the minimum distance  $d=n-k$. Conversely, if $d=n-k$, then there exists a function $f\in \mathscr{L}(D)$ with exactly $k$ zeros at points $P_{i_{1}},\cdots, P_{i_{k}}$ in $\mathcal{P}$. Then $\mathrm{div}(f)\ge \sum_{j=1}^{k}P_{ij}-\sum n_{Q}Q$ and $P_{i_{1}}\oplus P_{i_{2}}\oplus\cdots\oplus P_{i_{k}}=Q_{D}$. Consequently, the elliptic code $\mathcal{C}(E,\mathcal{P},D )$ is NMDS if and only if $N(\mathcal{P},k,Q_{D})> 0$.
	
\end{IEEEproof}
Proposition \ref{elli MDS} connects NMDS property of elliptic curve code $\mathcal{C}(E,\mathcal{P},D )$ with subset sums in the group $E(\mathbb{F}_{q})$. Further, when the elliptic curve code $\mathcal{C}(E,\mathcal{P},D )$ is NMDS and $E(\mathbb{F}_{q})$ is a $p$-group, all $1$-designs and $2$-designs from the minimum-weight codewords in $\mathcal{C}(E,\mathcal{P},D )$ are given in the following theorem. 
\begin{theorem}\label{1,2}
	Let $E/\mathbb{F}_{q}$ be an elliptic curve, and $E(\mathbb{F}_{q})\cong\mathbb{Z}_{p^{e_{1}}}\oplus \mathbb{Z}_{p^{e_{2}}}$ ($1< e_{1}\le e_{2}$), where $p$ is an odd prime. Assume $E(\mathbb{F}_{q})=\{P_{1},\cdots,P_{n}\}$ with $n=p^{e_{1}+e_{2}}$, and define $\mathcal{P}=\{P_{1},\cdots,P_{n}\}$. Let $D=\sum n_{Q}Q$ be an \(\mathbb{F}_{q}\)-rational divisor, where $0<\deg (D)=k<n$, $\mathrm{supp}(D)\cap\mathcal{P} =\emptyset$. Assume $Q_{D}=\bigoplus \left [  n_{Q}\right ] Q$ corresponds to $x:=(x_{1},x_{2})\in \mathbb{Z}_{p^{e_{1}}}\oplus \mathbb{Z}_{p^{e_{2}}} $, where $0\le x_{i} \le p^{e_{i}}-1, i=1,2$. Then the elliptic curve code $\mathcal{C}(E,\mathcal{P},D )$ is NMDS, and
\begin{enumerate}[label=(\roman*)]
	\item If $e_1=e_2=1$, then the minimum-weight codewords in the
	$[p^2,k,p^2-k]$ code $\mathcal{C}(E,\mathcal{P},D)$ support a
	$1$-$(p^2,p^2-k,r)$ design if and only if $p\mid k$.
	
	\item If $e_2>1$, then the minimum-weight codewords in the
	$[p^{e_1+e_2},k,p^{e_1+e_2}-k]$ code $\mathcal{C}(E,\mathcal{P},D)$
	support a $1$-$(p^{e_1+e_2},p^{e_1+e_2}-k,r')$ design if and only if
	one of the following conditions holds:
	\begin{enumerate}[label=(\alph*)]
		\item $p^{e_2}\mid k$.
		\item $p\nmid x_1$ or $p\nmid x_2$.
		\item $p\mid x_i$ for $i=1,2$, and
		\[
		\max\bigl\{\nu_p(k)-\nu_p^i(x_i)\mid i=1,2\bigr\}\ge 1.
		\]
	\end{enumerate}
	
	\item If $e_1=e_2=1$, then the minimum-weight codewords in the
	$[p^2,k,p^2-k]$ code $\mathcal{C}(E,\mathcal{P},D)$ support a
	$2$-$(p^2,p^2-k,\lambda)$ design if and only if $p\mid k$ and $x=0$.
\end{enumerate}
\end{theorem}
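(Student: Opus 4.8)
The plan is to convert the design statement about minimum-weight codewords into one about the subset-sum blocks $\mathcal{B}_{k}^{Q_{D}}$ in the group $G=E(\mathbb{F}_{q})$, and then invoke the subset-sum design theorems. First I would sharpen the divisor computation in the proof of Proposition \ref{elli MDS} into an exact bijection. A minimum-weight codeword comes from an $f\in\mathscr{L}(D)$ with exactly $k$ zeros $\{P_{i_{1}},\dots,P_{i_{k}}\}\subseteq\mathcal{P}$, so $\operatorname{div}(f)\ge\sum_{j}P_{i_{j}}-D$; comparing degrees (both sides have degree $k-\deg D=0$) forces $\operatorname{div}(f)=\sum_{j}P_{i_{j}}-D$. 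Since this divisor is principal, the isomorphism $E(\overline{\mathbb{F}}_{q})\cong\operatorname{Cl}^{0}(E)$ yields $\bigoplus_{j}P_{i_{j}}=Q_{D}$, so the zero set belongs to $\mathcal{B}_{k}^{Q_{D}}$, and $f$ is pinned down up to a nonzero scalar by this set. Conversely, Lemma \ref{prin div} produces from each block of $\mathcal{B}_{k}^{Q_{D}}$ a function realizing it as its zero set. Thus $T\mapsto\mathcal{P}\setminus T$ is a bijection from $\mathcal{B}_{k}^{Q_{D}}$ onto the (repetition-free) block set $\mathcal{H}_{n-k}(\mathcal{C})$, so $(\mathcal{P}(\mathcal{C}),\mathcal{H}_{n-k}(\mathcal{C}))$ is precisely the complementary design of $(G,\mathcal{B}_{k}^{Q_{D}})$.

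With this identification, each part follows from a cited subset-sum theorem together with the fact, recalled in Section \ref{introduction}, that the complement of a $t$-design is again a $t$-design; hence the minimum-weight codewords support a $t$-design if and only if $(G,\mathcal{B}_{k}^{Q_{D}})$ is one. For (i) and (iii) we have $e_{1}=e_{2}=1$, so $G\cong\mathbb{Z}_{p}^{2}$ is elementary abelian and Theorem \ref{thm element p design} applies verbatim: $(G,\mathcal{B}_{k}^{x})$ is a $1$-design iff $p\mid k$, and a $2$-design iff $p\mid k$ and $x=0$. Complementation converts these into a $1$-design and a $2$-design on blocks of size $n-k=p^{2}-k$, giving (i) and (iii) with the stated parameters. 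For (ii), where $e_{2}>1$, the group $G\cong\mathbb{Z}_{p^{e_{1}}}\oplus\mathbb{Z}_{p^{e_{2}}}$ is a non-elementary abelian $p$-group, and I would apply Theorem \ref{thm p} with $m=2$, $(t_{1},t_{2})=(e_{1},e_{2})$, and $x=(x_{1},x_{2})$; its three conditions are exactly (a), (b), (c), and complementation again transfers the $1$-design equivalence to the minimum-weight codewords.

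Finally, the NMDS claim must hold in every case. By Proposition \ref{elli MDS} this is the single requirement $N(\mathcal{P},k,Q_{D})=b_{k}^{Q_{D}}>0$ for all $0<k<n$. In the elementary abelian cases (i) and (iii) this is immediate from Theorem \ref{thm b}, whose only exception $k=|G|=p^{2}$ is ruled out by $k<n$. For the non-elementary $p$-group of part (ii) I need the same nonvanishing of $b_{k}^{Q_{D}}$ across $1\le k\le n-1$, and this is the step I expect to be the main obstacle: the Kosters formula of Theorem \ref{thm bb} expresses $b_{k}^{Q_{D}}$ as an alternating sum whose positivity is not apparent term by term. I would establish it either from the positivity underpinning Theorem \ref{thm p} (when the $1$-design conditions hold they already force $b_{k}^{Q_{D}}>0$, leaving only the remaining targets to be treated separately) or by a direct swapping argument: from any $k$-subset, exchanging an element inside for one outside shifts the sum by a group element, and since such shifts generate $G$ one reaches every target $Q_{D}$, forcing $b_{k}^{Q_{D}}>0$ throughout the range. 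Assembling the bijection, the complementation principle, and this positivity then yields all three equivalences together with the asserted NMDS parameters.
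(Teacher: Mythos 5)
Your proposal is correct and follows essentially the same route as the paper: both arguments turn the minimum-weight codewords into a bijective copy of the subset-sum block family (the paper works with $\mathcal{B}_{n-k}^{-x}$ via supports, you work with $\mathcal{B}_{k}^{Q_{D}}$ via zero sets and then complement, which is the same thing), and both then quote Theorem~\ref{thm element p design} for parts (i) and (iii) and Theorem~\ref{thm p} for part (ii). The one place you go beyond the paper is also the place where the paper is weakest: for the NMDS claim in case (ii) the paper simply cites Theorem~\ref{thm b}, which as stated covers only elementary abelian $p$-groups, whereas you correctly flag that the positivity of $b_{k}^{Q_{D}}$ in $\mathbb{Z}_{p^{e_{1}}}\oplus\mathbb{Z}_{p^{e_{2}}}$ with $e_{2}>1$ needs a separate argument (your swapping/covering argument, or the formula of Theorem~\ref{thm bb}, does fill this in for odd-order groups); making that step explicit would be a genuine improvement over the printed proof.
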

\begin{IEEEproof}
	According to Proposition \ref{elli MDS}, the code $\mathcal{C}(E,\mathcal{P},D )$ is NMDS if and only if $N(E(\mathbb{F}_{q}), k, Q_{D})>0$. By Theorem \ref{thm b}, $b_{k}^{x}>0$ in the group $E(\mathbb{F}_{q})\cong\mathbb{Z}_{p^{e_{1}}}\oplus \mathbb{Z}_{p^{e_{2}}}$ ($p$ odd). Thus, the code $\mathcal{C}(E,\mathcal{P},D )$ is NMDS. Consider codewords of minimum weight $n-k$, and the incidence structure $\left ( \mathcal{P} (C) ), \mathcal{H}_{n-k}(C)\right )$, where $C:=\mathcal{C}(E,\mathcal{P},D )$. We claim that $\left ( \mathcal{P} (C) ), \mathcal{H}_{n-k}(C)\right )$ is a $t$-design if and only if $\left ( E(\mathbb{F}_{q}), \mathcal{B}_{k}^{x}\right )$ is a $t$-design. Indeed, consider the bijection $\Psi:\mathcal{B}_{n-k}^{-x}\to\mathcal{H}_{n-k}(C) $, given by 
	\[
	\begin{aligned}
		\Psi:\;&
		\bigl\{
		(P_{j_1},P_{j_2},\ldots,P_{j_{n-k}})
		\mid
		P_{j_1}\oplus P_{j_2}\oplus \cdots \oplus P_{j_{n-k}}
		=-Q_D
		\bigr\} \\
		&\longmapsto \{j_1,j_2,\ldots,j_{n-k}\}.
	\end{aligned}
	\]
	then the support $\{j_{1},j_{2},\cdots,j_{n-k}\}$ of a minimum-weight codeword $c\in \mathcal{C}(E,\mathcal{P},D ) $ corresponds to a unique block in $\mathcal{B}_{n-k}^{-x}$, with its complementary block in $\mathcal{B}_{k}^{x}$. Thus, $\left ( \mathcal{P} (C) ), \mathcal{H}_{n-k}(C)\right )$ is a $t$-design if and only if $\left ( E(\mathbb{F}_{q}), \mathcal{B}_{n-k}^{-x}\right )$ is a $t$-design,  equivalently, $\left ( E(\mathbb{F}_{q}), \mathcal{B}_{k}^{x}\right )$ is a $t$-design. By this correspondence, the support designs of $\mathcal{C}(E,\mathcal{P},D )$ are simply derived from $1$-designs and $2$-designs on $p$-groups given in Section \ref{pre}, Theorem \ref{thm element p design}.
\end{IEEEproof}

In the above theorem, we give support designs from the minimum-weight codewords in the elliptic curve code $\mathcal{C}(E,\mathcal{P},D )$ where $E(\mathbb{F}_{q})$ is a $p$-group. In the following lemma, we will show the existence of $\mathbb{F}_{q}$-rational divisor $D$, such that $Q_{D}=\infty$ ($x=0$), which matches the condition of $2$-design, given in the above Theorem \ref{1,2} (iii).
\begin{lemma}\label{divisor sum 0}
	Let $E/\mathbb{F}_{q}$ be an elliptic curve ($q\ge 7$), then there exists a point $Q=(x_{Q},y_{Q})\in E(\mathbb{F}_{q^{2}})\setminus E(\mathbb{F}_{q})$, such that $Q\oplus\phi(Q)=\infty$, where $ \phi $ is the Frobenius endomorphism: $(x,y)\mapsto (x^{q},y^{q})$. Consequently, $D=k\left(Q+\phi(Q)\right)$ is an $\mathbb{F}_{q}$-rational divisor with $Q_{D}=\infty$, for any positive integer $k$.
\end{lemma}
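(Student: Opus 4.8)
The plan is to translate the group-theoretic condition $Q\oplus\phi(Q)=\infty$ into a concrete solvability question over $\mathbb{F}_q$ and then settle it with the Hasse--Weil bound. Since $\phi^{2}$ fixes every point of $E(\mathbb{F}_{q^{2}})$, the condition $Q\oplus\phi(Q)=\infty$ is equivalent to $\phi(Q)=\ominus Q$, where $\ominus Q$ is the inverse of $Q$ in the group law. In the simplified Weierstrass model with $p$ odd one has $\ominus(x,y)=(x,-y)$, so $\phi(Q)=\ominus Q$ amounts to the two scalar equations $x_{Q}^{q}=x_{Q}$ and $y_{Q}^{q}=-y_{Q}$. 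Thus it suffices to find $x_{Q}\in\mathbb{F}_q$ for which $f(x_{Q}):=x_{Q}^{3}+a_{4}x_{Q}+b$ is a nonzero non-square in $\mathbb{F}_q$: choosing a square root $y_{Q}$ of $f(x_{Q})$ forces $y_{Q}\in\mathbb{F}_{q^{2}}\setminus\mathbb{F}_q$, and since $y_{Q}^{2}=f(x_{Q})\in\mathbb{F}_q$ while $y_{Q}\notin\mathbb{F}_q$ we automatically get $y_{Q}^{q}=-y_{Q}$. The point $Q=(x_{Q},y_{Q})$ then lies in $E(\mathbb{F}_{q^{2}})\setminus E(\mathbb{F}_q)$ and satisfies $Q\oplus\phi(Q)=\infty$. (The characteristic-$3$ model $y^{2}=x^{3}+a_{2}x^{2}+b$ is handled identically; for a characteristic-free treatment one argues directly with the kernel described next.)

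Next I would produce the required non-square value by counting. Let $\chi$ be the quadratic character of $\mathbb{F}_q$, let $N_{0}=\#\{x:f(x)=0\}\le 3$, and let $N_{\mathrm{ns}}=\#\{x:\chi(f(x))=-1\}$. From $\#E(\mathbb{F}_q)=q+1+\sum_{x\in\mathbb{F}_q}\chi(f(x))$ together with $N_{\mathrm{sq}}+N_{\mathrm{ns}}+N_{0}=q$ and $N_{\mathrm{sq}}-N_{\mathrm{ns}}=\sum_{x}\chi(f(x))$, one gets $N_{\mathrm{ns}}=\tfrac12\big(q-N_{0}-\sum_{x}\chi(f(x))\big)$. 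The Hasse--Weil bound (\ref{Hasse-Weil bound}) gives $\big|\sum_{x}\chi(f(x))\big|=|\#E(\mathbb{F}_q)-(q+1)|\le 2\sqrt q$, so $N_{\mathrm{ns}}\ge\tfrac12\big(q-3-2\sqrt q\big)$. The conceptual backbone is that the points we seek form exactly the kernel $E^{-}:=\{Q:\phi(Q)=\ominus Q\}\subseteq E(\mathbb{F}_{q^{2}})$ of the isogeny $\psi\colon Q\mapsto\phi(Q)\oplus Q$; as $\psi$ acts as the identity on the invariant differential it is separable, whence $\#E^{-}=\deg\psi=q+1+a$, where $a=q+1-\#E(\mathbb{F}_q)$ is the trace of Frobenius. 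Moreover $E^{-}\cap E(\mathbb{F}_q)=E(\mathbb{F}_q)[2]$, so a point of $E^{-}$ outside $E(\mathbb{F}_q)$ exists as soon as $q+1+a$ exceeds the order of the rational $2$-torsion.

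Granting such a $Q$, the asserted consequence is immediate. Because $Q\in E(\mathbb{F}_{q^{2}})$ we have $\phi^{2}(Q)=Q$, so $\phi$ permutes the two points $Q,\phi(Q)$ and hence fixes the divisor $Q+\phi(Q)$; this makes $Q+\phi(Q)$, and therefore $D=k\big(Q+\phi(Q)\big)$, an $\mathbb{F}_q$-rational divisor. Finally $Q_{D}=[k]Q\oplus[k]\phi(Q)=[k]\big(Q\oplus\phi(Q)\big)=[k]\infty=\infty$, as claimed.

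The main obstacle is the tightness of the count at the low end of the range: the estimate $N_{\mathrm{ns}}\ge\tfrac12(q-3-2\sqrt q)$ is positive only for $q\ge 11$, and the borderline obstruction is precisely a curve with full rational $2$-torsion ($N_{0}=3$) and trace $a=3-q$, for which $\#E^{-}=q+1+a=4$ collapses onto $E(\mathbb{F}_q)[2]\cong(\mathbb{Z}/2)^{2}$ and no point $Q$ outside $E(\mathbb{F}_q)$ exists. Such configurations can occur for small $q$, so the hypothesis $q\ge 7$ must be used in tandem with the structure of $E(\mathbb{F}_q)$ rather than through the crude estimate alone. In the setting where the lemma is applied, $E(\mathbb{F}_q)$ is an odd-order $p$-group (as in Theorem \ref{1,2}), so $E(\mathbb{F}_q)[2]=\{\infty\}$ and hence $E^{-}\cap E(\mathbb{F}_q)=\{\infty\}$; since $\#E^{-}=q+1+a\ge q+1-2\sqrt q=(\sqrt q-1)^{2}\ge 2$ for $q\ge 7$, the group $E^{-}$ contains a nontrivial point, which is automatically the desired $Q$. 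I would therefore organise the final proof around the kernel $E^{-}$, using Hasse--Weil to bound $\#E^{-}$ from below and the absence of rational $2$-torsion to guarantee that $E^{-}$ is not exhausted by $E(\mathbb{F}_q)$.
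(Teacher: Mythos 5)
Your proposal is correct, and its final organization around $\ker(\phi+1)$ is a genuinely different --- and sharper --- route than the paper's. The paper argues exactly as your opening paragraphs do: it seeks $x_Q\in\mathbb{F}_q$ with $f(x_Q)$ a non-square, and in the odd-characteristic case claims that otherwise $\#E(\mathbb{F}_q)=2q+1$, contradicting the Hasse--Weil bound. That count silently assumes every value $f(x)$ is a \emph{nonzero} square; the up-to-three roots of $f$ contribute one point each, so the honest lower bound is $2q-2$, which contradicts Hasse--Weil only for $q\ge 11$ --- exactly the threshold you extract from $N_{\mathrm{ns}}\ge\tfrac12(q-3-2\sqrt q)$. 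The borderline obstruction you describe is real, not hypothetical: the curve $y^2=x(x-1)(x-5)$ over $\mathbb{F}_7$ has $12$ rational points and full rational $2$-torsion, so $\ker(\phi+1)$ has order $q+1+a=4$ and coincides with $E(\mathbb{F}_7)[2]\subseteq E(\mathbb{F}_7)$; the lemma as literally stated fails for this curve, so the gap you flag is a gap in the paper's own proof and statement, not in yours. Your repackaging via the separable isogeny $\phi+1$ --- kernel of order $q+1+a\ge(\sqrt q-1)^2>2$ for $q\ge 7$, meeting $E(\mathbb{F}_q)$ exactly in the rational $2$-torsion --- is the right fix, because it isolates the precise extra hypothesis needed (trivial rational $2$-torsion), which holds automatically in the paper's application where $E(\mathbb{F}_q)\cong\mathbb{Z}_p\oplus\mathbb{Z}_p$ with $p$ odd. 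What the paper's character-sum count buys is elementary explicitness when it applies ($q\ge 11$, or $f$ without rational roots); what your kernel argument buys is a uniform, characteristic-free proof that makes the failure mode visible. Your derivation of the consequence ($\mathbb{F}_q$-rationality of $D$ and $Q_D=\infty$) coincides with the paper's. I would recommend stating the corrected hypothesis explicitly when you write this up.
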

\begin{IEEEproof}
	The Weierstrass equation $E(x,y)$ of an elliptic curve $E$ is \begin{equation}
		y^{2}+a_{1}xy+a_{3}y=x^{3}+a_{2}x^{2}+a_{4}x+b.
	\end{equation}  First, we claim that there exists an $x_{Q}\in \mathbb{F}_{q} $ such that the quadratic equation $E(x_{Q},y)$ has no solution  $y$ in $\mathbb{F}_{q}$. Then we assume $y_{Q}\in \mathbb{F}_{q^{2}}$ in a solution of $E(x_{Q},y)$. In this way, we get a rational point $(x_{Q},y_{Q})\in  E(\mathbb{F}_{q^{2}})\setminus E(\mathbb{F}_{q}) $, with $x_{Q}\in \mathbb{F}_{q} $ and $y_{Q}\in \mathbb{F}_{q^{2}}\setminus \mathbb{F}_{q}$, which is the desired point $Q$ with $Q\oplus\phi(Q)=\infty$, shown as follows: apply the $q$-th Frobenius map to both sides of the equation $E(x_{Q},y_{Q})$:
	\begin{equation}\label{elli eqQ1}
		y_{Q}^{2}+a_{1}x_{Q}y_{Q}+a_{3}y_{Q}=x_{Q}^{3}+a_{2}x_{Q}^{2}+a_{4}x_{Q}+b.
	\end{equation} Then we obtain \begin{equation}\label{elli eqQ2}
		(y_{Q}^{q})^{2}+a_{1}x_{Q}y_{Q}^{q}+a_{3}y_{Q}^{q}=x_{Q}^{3}+a_{2}x_{Q}^{2}+a_{4}x_{Q}+b.
	\end{equation} 
	Combine Eqs.(\ref{elli eqQ1}) and (\ref{elli eqQ2}), we have
	\begin{equation}\label{elli eqQ3}
		y_{Q}^{2}+a_{1}x_{Q}y_{Q}+a_{3}y_{Q}=(y_{Q}^{q})^{2}+a_{1}x_{Q}y_{Q}^{q}+a_{3}y_{Q}^{q}.
	\end{equation}
	It follows that
	\begin{equation}\label{elli eqQ4}
		(y_{Q}^{q})^{2}-y_{Q}^{2}=a_{1}x_{Q}(y_{Q}-y_{Q}^{q})+a_{3}(y_{Q}-y_{Q}^{q}).
	\end{equation}
	Because $y_{Q}\in \mathbb{F}_{q^{2}}\setminus \mathbb{F}_{q}$, $y_{Q}^{q}\ne y_{Q}$, then 
	\begin{equation}\label{elli eqQ5}
		y_{Q}^{q}+y_{Q}=-a_{1}x_{Q}-a_{3}.
	\end{equation}
	Therefore, \begin{equation}\label{elli eqQ5}
		\begin{aligned}
			Q\oplus \phi(Q)
			&=(x_Q,y_Q)\oplus (x_Q^q,y_Q^q) \\
			&=(x_Q,y_Q)\oplus (x_Q,-y_Q-a_1x_Q-a_3)
			=\infty.
		\end{aligned}
	\end{equation}
	Now we complete the proof of the claim.
	\begin{case}\label{case char non 2}
		If $q$ is odd, then $a_{1}=a_{3}=0$, denote the right side of E.q.(\ref{elli eqQ1}) by $f(x)$. If there does not exist such a $x_{Q}$, that is, for any $x\in \mathbb{F}_{q}$, the equation $y^{2}=f(x)$ has two distinct solutions in $\mathbb{F}_{q}$. It follows that, when $q\ge 5$, $$\# E(\mathbb{F}_{q})=2q+1>q+1+2\sqrt{q},$$ then it contradicts the Hasse-Weil bound.
	\end{case}
	\begin{case}\label{case char=2}
		If $q$ is even, then either $a_{1}=1, a_{3}=0$ or $a_{1}=0, a_{3}\ne 0$. In the former case, $E(x,y)$ is 
		\begin{equation}\label{elli eqQ6}
			y^{2}+xy=x^{3}+a_{2}x^{2}+a_{4}x+b.
		\end{equation} Suppose to the contrary, then for any fixed $x\in \mathbb{F}_{q}^{*}$, the E.q.(\ref{elli eqQ6}) has two distinct solutions of $y$, while for $x=0$ it has a unique solution. It results that when $q\ge 7$
		$$\# E(\mathbb{F}_{q})=2(q-1)+1+1=2q>q+1+2\sqrt{q},$$
		contradicting the Hasse-Weil bound. The latter case leads to contradiction in a similar way, which is omitted for brevity.
	\end{case}
	
\end{IEEEproof}

In the following proposition, we give a sufficient condition for the structure of $E(\mathbb{F}_{q})$ to be $\mathbb{Z}_{p}\oplus\mathbb{Z}_{p}$. It follows from a well-known condition (see \cite{FRR2012}) for the group structure to be $\mathbb{Z}_{m}\oplus\mathbb{Z}_{n}$, by taking $m=n=p$.
\begin{proposition}\cite{FRR2012}\label{structure}
	Let $q$ be a prime power, $p$ is a prime, and let $t=p^{2}-q-1$. Then there is an elliptic curve $E/\mathbb{F}_{q}$ such that $E(\mathbb{F}_{q})\cong \mathbb{Z}_{p}\oplus\mathbb{Z}_{p} $ if $\gcd(t,q)=1$, $t^{2}\le 4q$, and $p\mid q-1$.
\end{proposition}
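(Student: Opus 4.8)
The plan is to deduce this directly from the general classification of the groups that arise as $E(\mathbb{F}_q)$, specializing the realizability criterion cited from \cite{FRR2012} to the case $m=n=p$. Recall that for $q=p^f$, a group $\mathbb{Z}_{m}\oplus\mathbb{Z}_{n}$ with $m\mid n$ is isomorphic to $E(\mathbb{F}_q)$ for some elliptic curve $E/\mathbb{F}_q$ precisely when $m\mid q-1$ together with a trace condition: writing $N=mn$ for the number of rational points and $a=q+1-N$ for the corresponding Frobenius trace, the \emph{ordinary} branch of the criterion requires $\gcd(a,p)=1$ and $a^{2}\le 4q$, and in that branch every admissible $\mathbb{Z}_{m}\oplus\mathbb{Z}_{n}$ with $m\mid\gcd(n,q-1)$ is realized.

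First I would set $m=n=p$, so that the target group is $\mathbb{Z}_{p}\oplus\mathbb{Z}_{p}$, the number of points is $N=p^{2}$, and the trace is $a=q+1-p^{2}=-(p^{2}-q-1)=-t$. With this substitution the clauses of the criterion translate one-to-one into the three hypotheses of the proposition: the divisibility $m\mid\gcd(n,q-1)$ becomes $p\mid q-1$ (since $\gcd(p,q-1)=p$ exactly when $p\mid q-1$); the Hasse-type inequality $a^{2}\le 4q$ becomes $t^{2}\le 4q$; and the ordinariness condition $\gcd(a,p)=1$ becomes $\gcd(t,p)=1$, which, because $q=p^{f}$, is the same as $\gcd(t,q)=1$.

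Next I would verify that these hypotheses genuinely place us in the ordinary branch, so that no supersingular subcase of the general theorem interferes. The point to check is that $\gcd(t,q)=1$ rules out the boundary value $t^{2}=4q$: equality would force $t=\pm 2\sqrt{q}=\pm 2p^{f/2}$, which is divisible by $p$ and hence incompatible with $\gcd(t,q)=1$. Thus under our hypotheses $t^{2}\le 4q$ holds strictly whenever $t$ is coprime to $q$, and we are squarely in the ordinary case where the clean condition $p\mid q-1$ is the only additional requirement.

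The main (and essentially only) obstacle is bookkeeping: confirming that the sign convention $a=-t$ and the several case distinctions in the cited classification collapse correctly, so that specializing to $m=n=p$ leaves exactly the ordinary case with the three stated conditions and nothing more. Once this alignment is established, the criterion of \cite{FRR2012} guarantees an elliptic curve $E/\mathbb{F}_q$ with $E(\mathbb{F}_q)\cong\mathbb{Z}_{p}\oplus\mathbb{Z}_{p}$, which completes the proof.
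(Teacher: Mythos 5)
Your approach is exactly the paper's: the paper gives no proof of this proposition at all, merely citing \cite{FRR2012} and remarking that it follows from the general realizability criterion for $\mathbb{Z}_m\oplus\mathbb{Z}_n$ by taking $m=n=p$, which is precisely what you do, and your translation of the three hypotheses is correct in substance. One genuine slip, though: you conflate the proposition's prime $p$ with the characteristic of $\mathbb{F}_q$ by writing $q=p^f$. That identification cannot be right, since $p\mid q-1$ forces $\gcd(p,q)=1$, so $p$ is necessarily different from the characteristic $\ell$ of $\mathbb{F}_q$ (check the paper's examples: $q=7$, $p=3$). The ordinariness condition in the cited criterion is $\gcd(a,\ell)=1$, which is equivalent to $\gcd(t,q)=1$ because $q$ is a power of $\ell$ --- not because $q$ is a power of $p$ --- and likewise your exclusion of the boundary case $t^2=4q$ should read that $t=\pm 2\ell^{f/2}$ is divisible by $\ell$, contradicting $\gcd(t,q)=1$. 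With $p$ and $\ell$ kept distinct the argument goes through as you intend, and the divisibility clause $m\mid\gcd(n,q-1)$ correctly specializes to $p\mid q-1$.
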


Combine Theorem \ref{1,2} (iii),  Lemma \ref{divisor sum 0} and Proposition \ref{structure} together, we obtain a generic construction of NMDS codes supporting $2$-designs. It turns out that these codes satisfy the Generalized Assmus-Mattson theorem but do not satisfy the Assmus-Mattson theorem.
\begin{theorem}\label{main construction}
	Let $q$ be a prime power ($q\ge 7$), and let $p$ be an odd prime, such that $\gcd(p^{2}-q-1,q)=1$, $-2\sqrt{q}\le p^{2}-q-1\le 2\sqrt{q}$, and $p\mid q-1$. For any integer $k$ satisfying $p \mid k $ and $ 0 < k < p^{2}/2 $, there exists an NMDS code with parameters $[p^{2},2k,p^{2}-2k]$, and its minimum-weight codewords support a $2$-$(p^{2},p^{2}-2k,\lambda)$ design, with $$\lambda=\frac{2\binom{p^{2}-2k}{2} \left [ \binom{p^{2}}{2k}+(p^{2}-1)\binom{p}{2k/p} \right ] }  {p^{4}(p^{2}-1)}, $$ and its dual code has parameters $[p^{2}, p^{2}-2k, 2k]$, whose minimum-weight codewords also support a $2$-$(p^{2},2k,\lambda^{\perp})$ design, where  $$\lambda^{\perp}=\frac{4k(2k-1)\binom{p^{2}-2k}{2} \left [ \binom{p^{2}}{2k}+(p^{2}-1)\binom{p}{2k/p} \right ] }  {p^{4}(p^{2}-1)(p^{2}-2k)(p^{2}-2k-1)}.$$ Furthermore, codewords of each fixed weight in both codes support a $2$-design. 
\end{theorem}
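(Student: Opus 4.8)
The plan is to assemble the theorem from three already-established ingredients and then to compute the two design parameters and the weight distributions explicitly. First I would verify the hypotheses feed correctly into the preliminary results: since $\gcd(p^2-q-1,q)=1$, $(p^2-q-1)^2\le 4q$, and $p\mid q-1$, Proposition \ref{structure} guarantees an elliptic curve $E/\mathbb{F}_q$ with $E(\mathbb{F}_q)\cong\mathbb{Z}_p\oplus\mathbb{Z}_p$, so here $e_1=e_2=1$ and $n=p^2$. Because $q\ge 7$, Lemma \ref{divisor sum 0} supplies a point $Q\in E(\mathbb{F}_{q^2})\setminus E(\mathbb{F}_q)$ with $Q\oplus\phi(Q)=\infty$, hence the $\mathbb{F}_q$-rational divisor $D=2k\,(Q+\phi(Q))$ has $\deg(D)=2k$ and $Q_D=\infty$, i.e. $x=0$. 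I would set the dimension of the code to $\deg(D)=2k$; the constraint $0<k<p^2/2$ ensures $0<2k<p^2=n$, so Definition \ref{def of ell code} applies. Writing $2k$ in place of the ``$k$'' of Theorem \ref{1,2} and noting $p\mid k$ forces $p\mid 2k$ (as $p$ is odd), part (iii) of Theorem \ref{1,2} applies verbatim with $x=0$: the code $\mathcal{C}(E,\mathcal{P},D)$ is NMDS with parameters $[p^2,2k,p^2-2k]$ and its minimum-weight codewords support a $2$-$(p^2,p^2-2k,\lambda)$ design.

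Next I would pin down $\lambda$ and the weight-distribution data. The number of blocks of the minimum-weight design is $b=A_{p^2-2k}/(q-1)$, and by the standard support-design relation stated in the introduction, $\lambda=\binom{p^2-2k}{2}A_{p^2-2k}\big/\big[(q-1)\binom{p^2}{2}\big]$. The key computational input is $A_{p^2-2k}$, the number of minimum-weight codewords. By the support-to-subset-sum correspondence built in the proof of Theorem \ref{1,2} (the bijection $\Psi$ identifying supports with blocks of $\mathcal{B}_{2k}^{0}$ via complementation to $\mathcal{B}_{p^2-2k}^{-x}$), one has $A_{p^2-2k}=(q-1)\,b_{2k}^{0}$, where $b_{2k}^{0}$ is computed in the group $\mathbb{Z}_p\oplus\mathbb{Z}_p$. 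I would therefore evaluate the closed form of Theorem \ref{thm bb} at $G=\mathbb{Z}_p\oplus\mathbb{Z}_p$, $n=p^2$, $x=0$, $k\mapsto 2k$: here $\exp(G)=p$, $e(0)=p$, and $\#G[d]\in\{1,p^2\}$ for $d\in\{1,p\}$, so the outer sum over $s\mid\gcd(p,2k)=p$ collapses to the two terms $s=1$ and $s=p$, giving $b_{2k}^{0}=\frac1{p^2}\big[\binom{p^2}{2k}+(p^2-1)\binom{p}{2k/p}\big]$. Substituting this into the formula for $\lambda$ and using $q-1$ cancelling against the $q-1$ from $A_{p^2-2k}=(q-1)b_{2k}^0$ yields the stated $\lambda$; I would double-check the leading factor $2$ arises from $\binom{p^2-2k}{2}/\binom{p^2}{2}=\frac{(p^2-2k)(p^2-2k-1)}{p^2(p^2-1)}$ combined with the $1/p^2$ and then simplified.

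For the dual code, I would invoke Corollary \ref{min weight } ($A_{2k}^\perp=A_{p^2-2k}$) and Corollary \ref{NMDS complementary design} (Theorem \ref{thm:near-mds-correspondence}): since $\mathcal{C}$ is NMDS and its minimum-weight codewords form a $2$-design, the minimum-weight codewords of $\mathcal{C}^\perp$ form the complementary design, which is a $2$-$(p^2,2k,\lambda^\perp)$ design. I would obtain $\lambda^\perp$ either from the complementary-design parameter formula $\lambda_c=\lambda\binom{n-t}{k}/\binom{n-t}{k-t}$ recorded in the introduction (with $n=p^2$, $t=2$, block size $p^2-2k$ so the complement has block size $2k$) or, more directly, from $\lambda^\perp=\binom{2k}{2}A_{2k}^\perp\big/\big[(q-1)\binom{p^2}{2}\big]$; both should give $\lambda^\perp=\lambda\cdot\frac{2k(2k-1)}{(p^2-2k)(p^2-2k-1)}$, matching the displayed expression. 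Finally, the phrase ``codewords of each fixed weight support a $2$-design'' follows from Corollary \ref{coro AM near} (the Generalized Assmus-Mattson corollary): once $\min\{2k,p^2-2k\}\ge 3$ and the minimum-weight codewords support a $2$-design, every nonempty weight class in both $\mathcal{C}$ and $\mathcal{C}^\perp$ supports a $2$-design. I would close by noting these codes satisfy the Generalized but not the ordinary Assmus-Mattson theorem, as the number of distinct weights of $\mathcal{C}$ in $\{1,\dots,n-2\}$ exceeds $d^\perp-2=2k-2$.

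The main obstacle will be the arithmetic of correctly specializing Theorem \ref{thm bb} and tracking all combinatorial factors so that $\lambda$ and $\lambda^\perp$ come out exactly as stated. In particular, I must be careful that the counting $A_{p^2-2k}=(q-1)b_{2k}^0$ uses the \emph{full group} $E(\mathbb{F}_q)$ as $\mathcal{P}$ (so subsets of $\mathcal{P}$ are subsets of all of $G$, justifying the use of $b_{2k}^0$ rather than $b_{2k}^{0,*}$), and that the complement/dual bookkeeping between block size $2k$ and $p^2-2k$ is consistent throughout; a sign or an off-by-one in these binomial manipulations is the most likely place for an error.
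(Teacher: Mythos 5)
Your proposal follows essentially the same route as the paper: Proposition \ref{structure} supplies the curve with $E(\mathbb{F}_q)\cong\mathbb{Z}_p\oplus\mathbb{Z}_p$, Lemma \ref{divisor sum 0} supplies the divisor with $Q_D=\infty$, Theorem \ref{1,2}(iii) gives the NMDS property and the $2$-design from the minimum-weight codewords, Theorem \ref{thm bb} gives $b_{2k}^{0}$ and hence $\lambda$, the complementary-design correspondence (Corollary \ref{NMDS complementary design}) gives $\lambda^\perp$, and Corollary \ref{coro AM near} handles the remaining weight classes --- indeed you carry out the specialization of Theorem \ref{thm bb} more explicitly than the paper does. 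One small correction: since $Q+\phi(Q)$ is already a divisor of degree $2$, you must take $D=k\,(Q+\phi(Q))$ (as the paper does) rather than $2k\,(Q+\phi(Q))$, whose degree is $4k$; with that fix your argument coincides with the paper's proof.
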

\begin{IEEEproof}
	Choose an elliptic curve $E/\mathbb{F}_{q}$ with $E(\mathbb{F}_{q})\cong \mathbb{Z}_{p}\oplus\mathbb{Z}_{p} $. Let $\mathcal{P}=E(\mathbb{F}_{q})=\{P_{1},\cdots,P_{p^{2}}\}$. For $k$ satisfying $p \mid k $ and $ 0 < k < p^{2}/2 $, choose an $\mathbb{F}_{q}$-rational divisor $D=k(Q+\phi(Q))$, with $Q=(x_{Q},y_{Q})\in E(\mathbb{F}_{q^{2}})\setminus E(\mathbb{F}_{q})$, such that $Q\oplus\phi(Q)=\infty$. Then we obtain an NMDS elliptic curve code with parameters $[p^{2}, 2k, p^{2}-2k]$. The existence of elliptic curve with such a rational point group is guaranteed by Proposition \ref{structure}, and the existence of such a $\mathbb{F}_{q}$-rational divisor $D$ is guaranteed by Lemma \ref{divisor sum 0}. Then the $2$-$(p^{2},p^{2}-2k,\lambda)$ design comes directly from Theorem \ref{1,2} (iii). According to Proposition \ref{elli MDS} and Theorem \ref{1,2}, the support of a minimum-weight codeword (up to a multiple) one-to-one corresponds to a block in $\mathcal{B}_{p^{2}-2k}^{0}$, thus the parameter 
	\begin{align*}
		\lambda=&\frac{b_{P^{2}-2k}^{0}\binom{p^{2}-2k}{2}}{\binom{p^{2}}{2}}\\=&\frac{b_{2k}^{0}\binom{p^{2}-2k}{2}}{\binom{p^{2}}{2}}\\=&\frac{2\binom{p^{2}-2k}{2} \left [ \binom{p^{2}}{2k}+(p^{2}-1)\binom{p}{2k/p} \right ] }  {p^{4}(p^{2}-1)}.
	\end{align*}
	The last equality comes from the formula in Theorem \ref{thm bb}. Denote $\mathcal{C}:= \mathcal{C}(E,\mathcal{P},D )$, observe that the code $(\mathcal{P}(\mathcal{C}),\mathcal{H}_{w}(\mathcal{C}))$ satisfies the Generalized Assmus-Mattson theorem (see Corollary \ref{coro AM near}) but does not satisfy the Assmus-Mattson theorem (see Theorem \ref{AM}). By Corollary \ref{coro AM near}, codewords of each fixed weight in $\mathcal{\mathcal{C}}(E,\mathcal{P},D )$ and $\mathcal{\mathcal{C}}(E,\mathcal{P},D )^{\perp}$ support a $2$-design. In particular, the minimum-weight codewords in the dual code $\mathcal{\mathcal{C}}(E,\mathcal{P},D )^{\perp}$ support the complementary design of the one supported by the minimum-weight codewords of $\mathcal{\mathcal{C}}(E,\mathcal{P},D )$ by Corollary \ref{NMDS complementary design}, thus \begin{align*}
		\lambda^{\perp}=&\frac{\lambda\binom{p^{2}-2}{p^{2}-2k}}{\binom{p^{2}-2}{p^{2}-2k-2}}\\=&\frac{4k(2k-1)\binom{p^{2}-2k}{2} \left [ \binom{p^{2}}{2k}+(p^{2}-1)\binom{p}{2k/p} \right ] }  {p^{4}(p^{2}-1)(p^{2}-2k)(p^{2}-2k-1)}.
	\end{align*}
\end{IEEEproof}
\begin{example}\label{example of elli 2-de}
	Let $q=7$ and $p=3$, and the elliptic curve $E/\mathbb{F}_{7}$: $y^{2}=x^{3}+2$ has the rational point group $E(\mathbb{F}_{7})\cong \mathbb{Z}_{3}\oplus\mathbb{Z}_{3} $. Let $\mathcal{P}=E(\mathbb{F}_{7})=\{P_{1},\cdots,P_{9}\}$. Let $\alpha$ be a root of $x^2+4$, then $\mathbb{F}_{49}=\mathbb{F}_{7}(\alpha)$. Choose $Q=(1,6\alpha)$ with $Q\oplus\phi(Q)=(1,6\alpha)\oplus(1,\alpha)=\infty$, and let $k=3$. Thus $D=3Q+3\phi(Q)$, with the Riemann-Roch space
\begin{align*}
	\mathscr{L}(D)
	=&\bigl\{f\in\mathbb{F}_{7}(E)^{\times}\mid \operatorname{div}(f)\ge -3(Q+\phi(Q))\bigr\}\cup\{0\} \\
	=&\operatorname{span}\Biggl\{
	1,\frac{1}{x-1},\frac{1}{(x-1)^2},\frac{1}{(x-1)^3}, \\
	&\qquad\qquad\frac{y}{(x-1)^2},\frac{y}{(x-1)^3}
	\Biggr\}.
\end{align*}
	The elliptic curve code $\mathcal{C}(E,\mathcal{P},D )$ with parameters $[9,6,3]$ is the image of the evaluation map $\mathrm{ev}_{D}:\mathscr{L}(D)\to\mathbb{F}_{7}^{9} $, given by 
	$$\mathrm{ev}_{D}:f\mapsto  \left ( f(P_{1}), f(P_{2}),\cdots,f(P_{9}) \right ) \in\mathbb{F}_{7}^{9}. $$
	Then a generator matrix $G$ of $\mathcal{C}(E,\mathcal{P},D )$ is
	$$G = \begin{bmatrix}
		1 & 1 & 1 & 1 & 1 & 1 & 1 & 1 & 1 \\
		0 & 6 & 6 & 4 & 4 & 2 & 2 & 3 & 3 \\
		0 & 3 & 4 & 2 & 5 & 4 & 3 & 2 & 5 \\
		0 & 1 & 1 & 2 & 2 & 4 & 4 & 2 & 2 \\
		0 & 4 & 3 & 1 & 6 & 1 & 6 & 6 & 1 \\
		0 & 6 & 6 & 1 & 1 & 1 & 1 & 6 & 6
	\end{bmatrix}_{\!6 \times 9}.$$ 
	According to our Magma computations, $G$ generates the NMDS code $\mathcal{C}(E,\mathcal{P},D )$ with parameters $[9,6,3]$ and weight enumerator  \[
	\begin{aligned}
		&1 + 72 z^3 + 324 z^4 + 3348 z^5 \\
		&\quad + 10656 z^6 + 30024 z^7 + 43794 z^8 + 29430 z^9.
	\end{aligned}
	\] Furthermore, the supports of minimum weight codewords in $\mathcal{C}(E,\mathcal{P},D )$ form a $2$-$(9,3,1)$ design, i.e., a Steiner triple system $S(2,3,9)$. The dual code of $\mathcal{C}(E,\mathcal{P},D )$ has parameters $[9,3,6]$ and weight enumerator $$1+72z^6+216z^8+54z^9.$$ Furthermore, the supports of minimum weight codewords in the dual code form a $2$-$(9,6,5)$ design.
\end{example}
In Table \ref{tab:NMDS codes}, we list more NMDS elliptic curve codes with concrete constructions generated by Theorem \ref{main construction}, where $\alpha$ denotes a root of the irreducible quadratic polynomial we choose in each case.
\begin{table}[htbp]
	
	\caption{NMDS elliptic curve codes supporting $2$-designs}
	\label{tab:NMDS codes}
	\centering
	\scriptsize
	\setlength{\tabcolsep}{2pt}
	\renewcommand{\arraystretch}{1.18}
	\resizebox{\columnwidth}{!}{%
		\begin{tabular}{c l l l l l}
			\toprule
			$\mathbb{F}_q$ & Elliptic curve & $E(\mathbb{F}_q)$ & Polynomial & Divisor $D$ & Code parameter \\
			\midrule
			$\mathbb{F}_7$ & $y^2=x^3+2$ & $\mathbb{Z}_3\oplus\mathbb{Z}_3$ & $x^2+4$ & $k[(1,6\alpha)+(1,\alpha)]$ & $[9,6,3]$ $(k=3)$ \\
			$\mathbb{F}_{13}$ & $y^2=x^3+3$ & $\mathbb{Z}_3\oplus\mathbb{Z}_3$ & $x^2+11$ & $k[(2,8\alpha)+(2,5\alpha)]$ & $[9,6,3]$ $(k=3)$ \\
			$\mathbb{F}_{31}$ & $y^2=x^3+11$ & $\mathbb{Z}_5\oplus\mathbb{Z}_5$ & $x^2+11$ & $k[(0,18\alpha)+(0,13\alpha)]$ & $[25,2k,25-2k]$ $(5\mid k)$ \\
			$\mathbb{F}_{43}$ & $y^2=x^3+3$ & $\mathbb{Z}_7\oplus\mathbb{Z}_7$ & $x^2+41$ & $k[(0,18\alpha)+(0,25\alpha)]$ & $[49,2k,49-2k]$ $(7\mid k)$ \\
			$\mathbb{F}_{157}$ & $y^2=x^3+15$ & $\mathbb{Z}_{13}\oplus\mathbb{Z}_{13}$ & $x^2+155$ & $k[(0,20\alpha)+(0,137\alpha)]$ & $[169,2k,169-2k]$ $(13\mid k)$ \\
			$\mathbb{F}_{307}$ & $y^2=x^3+14$ & $\mathbb{Z}_{17}\oplus\mathbb{Z}_{17}$ & $x^2+305$ & $k[(0,43\alpha)+(0,264\alpha)]$ & $[289,2k,289-2k]$ $(17\mid k)$ \\
			$\mathbb{F}_{3541}$ & $y^2=x^3+127$ & $\mathbb{Z}_{59}\oplus\mathbb{Z}_{59}$ & $x^2+3539$ & $k[(0,2484\alpha)+(0,1057\alpha)]$ & $[3481,2k,3481-2k]$ $(59\mid k)$ \\
			$\mathbb{F}_{4423}$ & $y^2=x^3+2811$ & $\mathbb{Z}_{67}\oplus\mathbb{Z}_{67}$ & $x^2+4420$ & $k[(0,3209\alpha)+(0,1214\alpha)]$ & $[4489,2k,4489-2k]$ $(67\mid k)$ \\
			$\mathbb{F}_{5113}$ & $y^2=x^3+381$ & $\mathbb{Z}_{71}\oplus\mathbb{Z}_{71}$ & $x^2+5108$ & $k[(0,2490\alpha)+(0,2623\alpha)]$ & $[5041,2k,5041-2k]$ $(71\mid k)$ \\
			\bottomrule
		\end{tabular}%
	}
	\end{table}
\begin{corollary}\label{q+1}
	Let $q$ be a prime power ($q\ge 7$), and let $p$ be an odd prime, such that $\gcd(p^{2}-q-1,q)=1$, $1\le p^{2}-q-1\le 2\sqrt{q}$, and $p\mid q-1$. For any integer $k$ satisfying $p \mid k $ and $ 0 < k < p^{2}/2 $, there exists $q$-ary NMDS codes supporting $2$-designs whose lengths exceed $q+1$.
\end{corollary}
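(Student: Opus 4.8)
The plan is to obtain this corollary directly from Theorem \ref{main construction}, observing that the hypotheses assumed here are merely a strengthening of those in that theorem, and that the extra strength is precisely what is needed to push the length past $q+1$.

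First I would verify that all hypotheses of Theorem \ref{main construction} hold. The conditions that $q\ge 7$ is a prime power, that $p$ is an odd prime with $p\mid q-1$ and $\gcd(p^{2}-q-1,q)=1$, and that $k$ satisfies $p\mid k$ and $0<k<p^{2}/2$, are word-for-word the same in both statements. The sole difference is the bound on $p^{2}-q-1$: Theorem \ref{main construction} asks for $-2\sqrt{q}\le p^{2}-q-1\le 2\sqrt{q}$, while the corollary assumes the tighter range $1\le p^{2}-q-1\le 2\sqrt{q}$. Since $q\ge 7$ forces $-2\sqrt{q}<0<1$, the assumption $1\le p^{2}-q-1$ immediately implies $-2\sqrt{q}\le p^{2}-q-1$, so every hypothesis of Theorem \ref{main construction} is satisfied. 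Applying that theorem therefore yields a $q$-ary NMDS code $\mathcal{C}(E,\mathcal{P},D)$ with parameters $[p^{2},2k,p^{2}-2k]$ whose minimum-weight codewords support a $2$-design, together with the dual $2$-design on $\mathcal{C}(E,\mathcal{P},D)^{\perp}$.

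Second, I would convert the sharpened lower bound into the length statement. The code has length $n=p^{2}$, and the inequality $1\le p^{2}-q-1$ is equivalent to $p^{2}\ge q+2$. Hence $n=p^{2}\ge q+2>q+1$, which is exactly the assertion that the length exceeds $q+1$. This single step is the only place where the narrower range is used.

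There is no genuine obstacle in the argument itself, since the corollary is just a restriction of the parameter region in Theorem \ref{main construction} to the sub-range $p^{2}>q+1$. The substantive work---guaranteeing that an elliptic curve $E/\mathbb{F}_{q}$ with $E(\mathbb{F}_{q})\cong\mathbb{Z}_{p}\oplus\mathbb{Z}_{p}$ and a suitable divisor $D$ exist, and that the resulting NMDS code holds a $2$-design---has already been carried out via Proposition \ref{structure}, Lemma \ref{divisor sum 0}, and Theorem \ref{main construction}. The present corollary merely records that this admissible family contains cases, such as $q=7,\,p=3$ producing a $[9,6,3]$ code, in which the length genuinely surpasses $q+1$.
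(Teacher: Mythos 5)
Your proposal is correct and follows exactly the paper's route: the paper's own proof is the one-line remark that the corollary follows directly from Theorem \ref{main construction}, and you have simply made explicit the two ingredients implicit in that remark, namely that the corollary's hypotheses imply those of the theorem and that $1\le p^{2}-q-1$ forces the length $p^{2}\ge q+2>q+1$.
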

\begin{IEEEproof}
	The corollary can be directly obtained from Theorem \ref{main construction}. 
\end{IEEEproof}
There are a lot of parameters $(q,p,t)$ satisfying the condition in Corollary \ref{q+1} and $t>0$, thus providing  many NMDS codes with length exceeding $q+1$ supporting $2$-designs. In the range $p<2000$, we find 38 such parameters $(q,p,t)$ by computer search, we list these parameters as well as their corresponding code parameters in Table \ref{qpt}. Further, we conjecture that there are infinitely many such parameters $(q,p,t)$, thus providing infinitely many such NMDS codes.
\begin{table}[htbp]
	\tiny
	\centering
	\setlength{\tabcolsep}{2.0mm} 
	\renewcommand{\arraystretch}{1.2} 
	\caption{$(q, p, t)$ satisfying corollary \ref{q+1} with corresponding NMDS code parameters}
	\label{qpt}
	\begin{tabular}{cccc}
		\toprule
		$q$ (prime powers) & $p$ (odd primes) & $t = p^2-q-1$ & Code Parameters ($p \mid k , 0 < k < p^{2}/2 $) \\
		\midrule
		7 & 3 & 1 & $[9,2k,9-2k]$ \\
		43 & 7 & 5 & $[49,2k,49-2k]$ \\
		157 & 13 & 11 & $[169,2k,169-2k]$ \\
		343 & 19 & 17 & $[361,2k,361-2k]$ \\
		4423 & 67 & 65 & $[4489,2k,4489-2k]$ \\
		6163 & 79 & 77 & $[6241,2k,6241-2k]$ \\
		19183 & 139 & 137 & $[19321,2k,19321-2k]$ \\
		22651 & 151 & 149 & $[22801,2k,22801-2k]$ \\
		26407 & 163 & 161 & $[26569,2k,26569-2k]$ \\
		37057 & 193 & 191 & $[37249,2k,37249-2k]$ \\
		113233 & 337 & 335 & $[113569,2k,113569-2k]$ \\
		121453 & 349 & 347 & $[121801,2k,121801-2k]$ \\
		143263 & 379 & 377 & $[143641,2k,143641-2k]$ \\
		208393 & 457 & 455 & $[208849,2k,208849-2k]$ \\
		292141 & 541 & 539 & $[292681,2k,292681-2k]$ \\
		375157 & 613 & 611 & $[375769,2k,375769-2k]$ \\
		412807 & 643 & 641 & $[413449,2k,413449-2k]$ \\
		527803 & 727 & 725 & $[528529,2k,528529-2k]$ \\
		590593 & 769 & 767 & $[591361,2k,591361-2k]$ \\
		843643 & 919 & 917 & $[844561,2k,844561-2k]$ \\
		981091 & 991 & 989 & $[982081,2k,982081-2k]$ \\
		1041421 & 1021 & 1019 & $[1042441,2k,1042441-2k]$ \\
		1193557 & 1093 & 1091 & $[1194649,2k,1194649-2k]$ \\
		1246573 & 1117 & 1115 & $[1247689,2k,1247689-2k]$ \\
		1441201 & 1201 & 1199 & $[1442401,2k,1442401-2k]$ \\
		1514131 & 1231 & 1229 & $[1515361,2k,1515361-2k]$ \\
		1905781 & 1381 & 1379 & $[1907161,2k,1907161-2k]$ \\
		2023507 & 1423 & 1421 & $[2024929,2k,2024929-2k]$ \\
		2397853 & 1549 & 1547 & $[2399401,2k,2399401-2k]$ \\
		2453923 & 1567 & 1565 & $[2455489,2k,2455489-2k]$ \\
		2548813 & 1597 & 1595 & $[2550409,2k,2550409-2k]$ \\
		2626021 & 1621 & 1619 & $[2627641,2k,2627641-2k]$ \\
		2864557 & 1693 & 1691 & $[2866249,2k,2866249-2k]$ \\
		3050263 & 1747 & 1745 & $[3052009,2k,3052009-2k]$ \\
		3198733 & 1789 & 1787 & $[3200521,2k,3200521-2k]$ \\
		3241801 & 1801 & 1799 & $[3243601,2k,3243601-2k]$ \\
		3734557 & 1933 & 1931 & $[3736489,2k,3736489-2k]$ \\
		3946183 & 1987 & 1985 & $[3948169,2k,3948169-2k]$ \\
		\bottomrule
	\end{tabular}
\end{table}

\section {The weight distribution of the code}\label{iv}
In this section we determine the weight distribution of the constructed code. Notice that in Example \ref{example of elli 2-de}, in the elliptic curve code $\mathcal{C}(E,\mathcal{P},D )$ we have $A_{w}>0$ for all $d\le w\le n$. The following proposition shows it actually holds for any code from the construction in Theorem \ref{main construction}.
\begin{proposition}
	For any NMDS elliptic curve code $\mathcal{C}(E,\mathcal{P},D )$ with parameters $[p^{2},2k,p^{2}-2k]$ in Theorem \ref{main construction}, we have $A_{w}>0$ for all $p^{2}-2k\le w\le p^{2}$.
\end{proposition}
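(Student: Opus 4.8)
The plan is to recast ``$A_w>0$'' as the existence of a suitable effective divisor, and then to realize that divisor explicitly using the non-vanishing of subset-sum counts in $E(\mathbb{F}_q)\cong\mathbb{Z}_p\oplus\mathbb{Z}_p$. Throughout write $n=p^{2}$, $K=\deg(D)=2k$, and recall that $\mathcal{P}=E(\mathbb{F}_q)$, $D=k(Q+\phi(Q))$ with $Q_{D}=\infty$, and $\mathrm{supp}(D)=\{Q,\phi(Q)\}$ disjoint from $\mathcal{P}$. Since the minimum distance is $n-K$, a codeword of weight $w$ vanishes at exactly $n-w=:z$ evaluation points, with $0\le z\le K$. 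The first step is to prove a weight-refined version of Proposition \ref{elli MDS}: there is a codeword of weight $n-z$ in $\mathcal{C}(E,\mathcal{P},D)$ if and only if there is an effective $\mathbb{F}_q$-rational divisor $Z$ of degree $K$ with $\bigoplus_{P}[v_{P}(Z)]P=\infty$ (equivalently $Z\sim D$) whose support meets $\mathcal{P}$ in exactly $z$ points. Indeed, given such a $Z$, the divisor $Z-D$ has degree $0$ and group sum $\infty\ominus\infty=\infty$, so Lemma \ref{prin div} yields $f$ with $\mathrm{div}(f)=Z-D$; then $f\in\mathscr{L}(D)$, and because $\mathrm{supp}(D)\cap\mathcal{P}=\emptyset$ one has $v_{P}(f)=v_{P}(Z)$ for every $P\in\mathcal{P}$, whence $f(P)=0$ iff $P\in\mathrm{supp}(Z)$ and $\mathrm{ev}_{D}(f)$ has weight $n-z$. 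Thus it suffices to exhibit one such $Z$ for each $z\in\{0,1,\ldots,K\}$.

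The construction splits by the parity of $z$, and in both cases I reuse the conjugate pair $Q,\phi(Q)$ (which already sums to $\infty$) to absorb the leftover degree, so that no second auxiliary point is needed. For \emph{even} $z$ I would take
$$ Z = A_{1}+\cdots+A_{z}+\left(k-\tfrac{z}{2}\right)(Q+\phi(Q)), $$
where $\{A_{1},\ldots,A_{z}\}$ is a $z$-subset of $\mathcal{P}$ with $A_{1}\oplus\cdots\oplus A_{z}=\infty$. Here $\deg(Z)=z+(2k-z)=K$, the non-rational part contributes $(k-z/2)\,\infty=\infty$ to the group sum, and $\mathrm{supp}(Z)\cap\mathcal{P}=\{A_{1},\ldots,A_{z}\}$ has exactly $z$ points. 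Such a subset exists precisely when $b_{z}^{0}>0$ in $\mathbb{Z}_p\oplus\mathbb{Z}_p$. This already covers $z=0$ (giving $Z=D$, $f=1$, weight $n$, via the empty subset) and $z=K$ (the minimum-weight case, where the coefficient of $(Q+\phi(Q))$ is $0$).

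For \emph{odd} $z$ (so $1\le z\le K-1$) a single auxiliary conjugate pair cannot repair the odd degree; this parity mismatch is the \emph{main obstacle}. I would resolve it by placing a double point at $\infty\in\mathcal{P}$: take
$$ Z = 2\,\infty + A_{2}+\cdots+A_{z}+\tfrac{2k-z-1}{2}(Q+\phi(Q)), $$
where $\{A_{2},\ldots,A_{z}\}\subseteq\mathcal{P}\setminus\{\infty\}$ is a $(z-1)$-subset with $A_{2}\oplus\cdots\oplus A_{z}=\infty$. Then $\deg(Z)=2+(z-1)+(2k-z-1)=K$, the group sum is $[2]\infty\oplus\infty\oplus\infty=\infty$, and $\mathrm{supp}(Z)\cap\mathcal{P}=\{\infty,A_{2},\ldots,A_{z}\}$ consists of exactly $z$ distinct rational points (the doubled $\infty$ still counts once, which is exactly how the degree-$1$ shortfall is traded for a subset sum over $G^{*}=(\mathbb{Z}_p\oplus\mathbb{Z}_p)\setminus\{\infty\}$). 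Such a subset exists when $b_{z-1}^{0,*}>0$.

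The final step is to check these subset-sum quantities are nonzero in the required ranges via Theorem \ref{thm b}. Since $0<k<p^{2}/2$ forces $K=2k\le p^{2}-1$, every $z\in\{0,\ldots,K\}$ satisfies $z\le p^{2}-1$, so $b_{z}^{0}>0$ with no exception (the only excluded case, $z=p^{2}$ with nonzero target, never occurs because the target is $0$). For the odd case, $z-1\le K-2\le p^{2}-3$, so $z-1$ avoids both exceptional values $1$ and $p^{2}-2$ attached to the zero target, while $z=1$ reduces to the empty set with $b_{0}^{0,*}=1$; hence $b_{z-1}^{0,*}>0$ in all cases. Combining the two constructions produces an effective divisor $Z$, and therefore a codeword, of every weight $n-z$ for $0\le z\le K$, which is exactly $A_{w}>0$ for all $p^{2}-2k=n-K\le w\le n$. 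The only delicate point is the parity obstruction for odd $z$; the remainder is a direct application of Lemma \ref{prin div} together with the subset-sum non-vanishing theorem.
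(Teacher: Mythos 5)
Your proposal is correct and follows essentially the same route as the paper's own proof: both split on the parity of the number of zeros $z=p^{2}-w$, use $b_{z}^{0}>0$ in $\mathbb{Z}_{p}\oplus\mathbb{Z}_{p}$ for even $z$, and repair the parity obstruction for odd $z$ by placing a double zero at $\infty$ together with a $(z-1)$-subset of $G^{*}$ summing to zero, invoking $b_{z-1}^{0,*}>0$. Your write-up is somewhat more explicit than the paper's (you exhibit the effective divisor $Z$, verify its degree, and check membership in $\mathscr{L}(D)$ and the exceptional cases of Theorem \ref{thm b}), but the underlying argument is the same.
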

\begin{IEEEproof}
	We only need to prove that for any $w$ with $p^{2}-2k\le w\le p^{2}$, there exist a function $f\in \mathscr{L}(D)$ with exactly $p^{2}-w$ zeros, then its corresponding codeword has weight $w$. If $w$ is odd, then $p^{2}-w$ is even. Then there exist a subset $\left \{ P_{i_{1}},\cdots, P_{i_{p^{2}-w}}\right \} \subseteq  \mathcal{P}=E(\mathbb{F}_{q})$, such that $$P_{i_{1}}\oplus\cdots\oplus P_{i_{p^{2}-w}}=\left[\frac{p^{2}-w}{2}\right]\left(Q\oplus \phi(Q)\right)=\infty.$$ This is due to $b_{p^{2}-w}^{0}>0$ in the $E(\mathbb{F}_{q})\cong \mathbb{Z}_{p}\oplus\mathbb{Z}_{p} $ (see Theorem \ref{thm b}). By Lemma \ref{prin div}, there exists a function $f\in \mathscr{L}(D)$ with zeros at $\left \{ P_{i_{1}},\cdots, P_{i_{p^{2}-w}}\right \}$. If $w$ is even, then $p^{2}-w$ is odd, let $p^{2}-w=2m+1$ ($m\ge 0$). According to Theorem \ref{thm b}, $b_{2m}^{0,*}>0$ in $E(\mathbb{F}_{q})\cong \mathbb{Z}_{p}\oplus\mathbb{Z}_{p} $, then there exist a set $\left \{ P_{j_{1}},\cdots, P_{j_{2m}}\right \}\subseteq E(\mathbb{F}_{q})\setminus \infty $, such that $$P_{j_{1}}\oplus\cdots\oplus  P_{j_{2m}}\oplus\left[2\right]\infty=\left[(m+1)\right]\left(Q\oplus \phi(Q)\right)=\infty,$$ then there exists a function $f\in \mathscr{L}(D)$ with zeros at $\left \{ P_{j_{1}},\cdots, P_{j_{2m}}, \infty\right \}$.
	
\end{IEEEproof}
The following theorem gives the weight distribution of the NMDS elliptic curve code $\mathcal{C}(E,\mathcal{P},D )$ and its dual code $\mathcal{C}(E,\mathcal{P},D )^{\perp}$ in Theorem \ref{main construction}. 
\begin{theorem}\label{min- -weight of ellipt code}
	For an NMDS elliptic curve code $\mathcal{C}(E,\mathcal{P},D)$ with
	parameters $[p^2,2k,p^2-2k]$ in Theorem~\ref{main construction}, we have
	\begin{equation}
		\label{eq:min-weight-A}
		A_{p^2-2k}=A_{2k}^{\perp}
		=\frac{(q-1)\left[\binom{p^2}{2k}+(p^2-1)\binom{p}{2k/p}\right]}{p^2},
	\end{equation}
	and
	\begin{align}
		A_{p^2-2k+s}
		={}& \binom{p^2}{2k-s}
		\sum_{j=0}^{s-1}(-1)^j
		\binom{p^2-2k+s}{j}(q^{s-j}-1)\notag\\
		&\quad +(-1)^s\binom{2k}{s}A_{p^2-2k},
	\end{align}
	for $s\in\{1,2,\ldots,2k\}$, and
	\begin{align}
		A_{2k+s}^{\perp}
		={}& \binom{p^2}{2k+s}
		\sum_{j=0}^{s-1}(-1)^j
		\binom{2k+s}{j}(q^{s-j}-1)\notag\\
		&\quad +(-1)^s\binom{p^2-2k}{s}A_{2k}^{\perp},
	\end{align}
	for $s\in\{1,2,\ldots,p^2-2k\}$.
\end{theorem}
\begin{IEEEproof}
	By Theorem \ref{1,2}, each block in $\mathcal{B}_{P^{2}-2k}^{0}$ one-to-one corresponds to a minimum-weight codewords (up to a multiple). By Corollary \ref{min weight } $A_{p^{2}-2k}=A_{2k}^{\perp}=(q-1)b_{p^{2}-2k}^{0}$, then the weight distribution is obtained by the formulas in Theorem \ref{weight distribution NMDS} and Theorem \ref{thm bb}.
\end{IEEEproof}

\section{ Summary and Concluding Remarks}\label{Summary and Concluding Remarks} 

This paper presents the first \emph{generic construction} of $q$-ary near-MDS (NMDS) codes supporting $2$-designs with lengths exceeding $q + 1$. Leveraging the connection between minimum-weight codewords in elliptic curve codes and subset sums in the rational point groups of elliptic curves, we constructed $2$-designs from those supported by subset sums in finite abelian groups. Notably, the resulting $2$-designs include new families of combinatorial designs that fall outside the reach of the classical Assmus–Mattson theorem. Our construction significantly expands the known landscape of $q$-ary NMDS codes supporting $2$-designs, particularly in the regime of lengths beyond the classical $q + 1$ barrier.

Since algebraic-geometry-based methods can construct $q$-ary NMDS codes with lengths up to approximately $q + 2\sqrt{q} + 1$, it is natural to ask whether more $t$-designs—possibly with varying parameters—can be obtained from such long NMDS codes. In particular, employing elliptic curves with different types of rational point groups may yield richer design structures. Exploring these directions remains an interesting avenue for future research.

\section*{Acknowledgments}
The authors are very grateful to the Associate Editor, Prof. Simeon Ball, and the anonymous reviewers for their valuable comments and suggestions that improved the presentation and quality of this article.

\ifCLASSOPTIONcaptionsoff
  \newpage
\fi



%

%

\begin{IEEEbiographynophoto}{Hengfeng Liu}
received the B.S. degree from the School of Mathematics, Southwest Jiaotong University, Chengdu, China, in 2024. He is currently pursuing a Ph.D. degree in Mathematics at Southwest Jiaotong University. His research interests include coding theory, cryptography, and algebraic combinatorics.
\end{IEEEbiographynophoto}


\begin{IEEEbiographynophoto}{Chunming Tang} (Member, IEEE) received the B.S. degree from Sichuan Normal University, Sichuan, in 2004, and the M.S. and Ph.D. degrees from Peking University, Beijing, China, in 2012. From 2017 to 2018, he was a Post-Doctoral Member with the Department of Mathematics, Universities of Paris VIII. From 2018 to 2020, he was a Post-Doctoral Member with the Department of Computer Science and Engineering, The Hong Kong University of Science and Technology. He was a Professor with the School of Mathematics and Information, China West Normal University, Nanchong, Sichuan. He is currently a Professor with the School of Information Science and Technology, Southwest Jiaotong University, Chengdu, China. His research interests include cryptography, coding theory, and information security.
\end{IEEEbiographynophoto}

\begin{IEEEbiographynophoto}{Zhengchun Zhou} (Senior Member, IEEE) received the B.S. and M.S. degrees in mathematics and the Ph.D. degree in information security from Southwest Jiaotong University, Chengdu, China, in 2001, 2004, and 2010, respectively. From 2012 to 2013, he was a Post-Doctoral Researcher with the Department of Computer Science and Engineering, The Hong Kong University of Science and Technology. From 2013 to 2014, he was a Research Associate with the Department of Computer Science and Engineering, The Hong Kong University of Science and Technology. He is currently a Professor with the School of Information Science and Technology (and also a Professor with the School of Mathematics), Southwest Jiaotong University. His research interests include coding theory, cryptography, and intelligent information processing. He was a recipient of the National Excellent Doctoral Dissertation Award in 2013 (China). He is an Associate Editor of several journals, including IEEE TRANSACTIONS ON INFORMATION THEORY, IEEE TRANSACTIONS ON COMMUNICATIONS, IEEE TRANSACTIONS ON COGNITIVE COMMUNICATIONS AND NETWORKING, Cryptography and Communications, and Advances in Mathematics of Communications.
	
\end{IEEEbiographynophoto}

\begin{IEEEbiographynophoto}{Dongchun Han} received the B.S. and M.S. degrees from Sichuan University, Chengdu, China, in 2009 and 2012, respectively, and the Ph.D. degree from Nankai University, Tianjin, China, in 2015, both in mathematics. He is currently a  Professor with the School of Mathematics, Southwest Jiaotong University. His research interests include combinatorics, number theory, and coding theory.
\end{IEEEbiographynophoto}

\begin{IEEEbiographynophoto}{Hao Chen} received the Ph.D. degree in mathematics from the Institute of Mathematics, Fudan University, in 1991. He is currently a Professor with the College of Information Science and Technology, Jinan University, China. His research interests include coding theory and cryptography, quantum information and computation, lattices, and algebraic geometry.
	
\end{IEEEbiographynophoto}




\end{document}